\def\th@plain{%
  \thm@notefont{}
  \itshape 
}
\def\th@definition{%
  \thm@notefont{}
  \normalfont 
}
\newcommand{\NN}{{\mathbb{N}}}  
\newcommand{\RR}{{\mathbb{R}}}  
\renewcommand{\SS}{{\mathbb{S}}} 
\newcommand{\ZZ}{{\mathbb{Z}}}  
\newcommand{\Ad}{{\operatorname{Ad}}}  
\newcommand{\Der}{{\operatorname{Der}}}  
\newcommand{\Diff}{{\operatorname{Diff}}}  
\newcommand{\ham}{{\operatorname{ham}}} 
\newcommand{\pr}{{\operatorname{pr}}} 
\newcommand{\Stab}{{\operatorname{Stab}}} 
\newcommand{\Vect}{{\operatorname{vect}}}  
\newcommand{\g}{{\mathfrak{g}}} 
\newcommand{\SO}{{\operatorname{SO}}}  
\newcommand{\CIN}{{C^\infty}}   
\newcommand{\hook}{{\lrcorner\,}} 
\newcommand{\pois}[2]{{\{#1,#2\}}}  
\newcommand{\red}[1]{{/\!/\!_#1\,}} 
\newcommand{\hypref}[2]{{\hyperref[#1]{#2~\ref{#1}}}}
\newcommand{\ifwork}[1]{\ifthenelse{\boolean{workmode}}{#1}{}}
\newcommand{\comment}[1]{}
\newcommand{\mute}[1]{}
\newcommand{\printname}[1]{}
\renewcommand{\comment}[1]{{\marginpar{*}\ \scriptsize{#1}\ }}
\renewcommand{\mute}[1]{{\scriptsize \ #1\ }\marginpar{\scriptsize muted}}
\renewcommand{\printname}[1]
    {\smash{\makebox[0pt]{\hspace{-1.0in}\raisebox{8pt}{\tiny #1}}}}
\newcommand{\labell}[1] {\label{#1} \printname{#1}}
\newcommand{\ifsection}[2]{\ifthenelse{\boolean{sections}}{#1}{#2}}
\theoremstyle{plain}
    \newtheorem{theorem}{Theorem}[section]
    \newtheorem{theorem}{Theorem}
\newtheorem{proposition}[theorem]{Proposition}
\newtheorem{corollary}[theorem]{Corollary}
\newtheorem{lemma}[theorem]{Lemma}
\theoremstyle{definition}
\newtheorem{definition}[theorem]{Definition}
\newtheorem{example}[theorem]{Example}
\newtheorem{remark}[theorem]{Remark}
\newtheorem{notation}[theorem]{Notation}
\newtheorem{question}[theorem]{Question}
\author{Jordan Watts}
\address{Department of Mathematics, University of Illinois at Urbana-Champaign, Urbana, Illinois, USA 61801}
\email{jawatts@illinois.edu}
\title{Differential Spaces, Vector Fields, and Orbit-Type Stratifications}
\date{\today}
\begin{document}

\begin{abstract}
Let $G$ be a Lie group, and let $(M,\omega)$ be a symplectic manifold.  If $G$ admits a Hamiltonian action on $(M,\omega)$ with momentum map $\mu$, then $M$, the zero-level set of $\mu$, the orbit space, and the corresponding symplectic quotient all have induced stratifications.  We push this setting into the language of differential spaces, and as a consequence we find that the stratifications are intrinsic to the ring of smooth functions on each space.
\end{abstract}

\maketitle

\tableofcontents

\section{Introduction}\labell{s:intro}

Let $G$ be a Lie group admitting a proper Hamiltonian group action on a symplectic manifold $(M,\omega)$ with momentum map $\mu:M\to\g^*$.  Let $Z:=\mu^{-1}(0)$.  Then it is known that the action induces stratifications of $M$ and $Z$, as well as the orbit space $M/G$ and the symplectic quotient $Z/G$ (see, for example, \cite{bierstone1}, \cite{bierstone2}, \cite{CS}, and \cite{DK}, as well as \cite{lerman-sjamaar}, \cite{BL}, and \cite{LW}).  These are the so-called orbit-type stratifications of each space.

Now, $(M,\omega)$ is naturally a Poisson manifold, and this structure induces a Poisson structure on $Z/G$, which in turn induces symplectic structures on each of the strata.  The purpose of this paper is to port the language of stratified spaces in this context into the language of differential structures and subcartesian spaces (see \cite{sniatycki}, \cite{lusala-sniatycki}, and \cite{sniatycki-book}).  As a result, we will see that the orbit-type stratifications are intrinsic to the rings of smooth functions (induced by the smooth manifold structure on $M$) on each respective space.  For the symplectic quotient, we need the Poisson structure as well in the case that $0\in\g^*$ is a critical value of $\mu$.  It remains an open problem whether, in the critical case, the orbit-type stratification on $Z/G$ is independent of the Poisson structure (see \hypref{t:regvalue}{Theorem} and \hypref{q:critvalue}{Question}).  Much of this work is already known (see \cite{sniatycki}, \cite{lusala-sniatycki}, \cite{sniatycki-book}, \cite{lerman-sjamaar}, \cite{BL}, and \cite{LW}), however, not all of it has been presented using differential structures; hence, in particular, we emphasise the intrinsicality of the stratifications to the smooth functions on $Z$ and $Z/G$.  Much of the material in this paper also appears in \cite{watts2}.

This paper is broken down as follows.  \hypref{s:preliminaries}{Section} is a comprehensive set of preliminaries necessary for the rest of the paper.  \hypref{s:zariski}{Section} reviews background on the Zariski tangent bundle on a subcartesian space.  \hypref{s:vectorfields}{Section}, \hypref{s:locallycomplete}{Section}, and \hypref{s:orbits}{Section} review the theory of vector fields on subcartesian spaces as developed by \'Sniatycki \cite{sniatycki}, \cite{sniatycki-book}.  \hypref{s:orbitaltangentbundle}{Section}, \hypref{s:liealgebra}{Section}, and \hypref{s:orbitalmaps}{Section} introduce a subcategory of subcartesian spaces (in particular, subcartesian spaces equipped with a family of vector fields) in which the context of a Hamiltonian group action sits naturally.

Finally, there has been much study of the symplectic quotient in the case that it is not a manifold; this is generally referred to as \emph{singular reduction}, and references besides the above include Arms-Cushman-Gotay \cite{ACG}, Guillemin-Ginzburg-Karshon \cite{GGK}, and Meinrenken-Sjamaar \cite{MS}.

The author would like to thank Yael Karshon for her enthusiasm, and both her and J\k{e}drzej \'Sniatycki for many important discussions.

\section{Preliminaries}\labell{s:preliminaries}
\subsection*{\emph{The Setting}}

Let $G$ be a compact Lie group acting smoothly on a connected smooth manifold $M$.

\begin{definition}
The \emph{orbit space} $M/G$ of this action is the set of equivalence classes given by the following equivalence relation on $M$: for $x,y\in M$, $x\sim y$ if $x$ and $y$ are in the same $G$-orbit; that is, if there exists $g\in G$ such that $g\cdot x=y$.  We equip $M/G$ with the quotient topology, which makes $M/G$ into a Hausdorff and locally compact space.
\end{definition}

Now assume $M$ is a symplectic manifold with symplectic form $\omega$.

\begin{definition}
A smooth $G$-action on $(M,\omega)$ is \emph{Hamiltonian} if the action preserves $\omega$ and there exists a smooth map $\mu:M\to\g^*$ (where $\g^*$ is the dual to the Lie algebra $\g$ of $G$) satisfying:
\begin{enumerate}
    \item $\mu$ is $G$-equivariant with respect to the coadjoint action of $G$ on $\g^*$,
    \item For any $\xi\in\g$, let $\xi_M$ be the vector field induced by $\xi$ on $M$: for any $x\in M$, $$\xi_M|_x:=\frac{d}{dt}\Big|_{t=0}\exp(t\xi)\cdot x,$$ and let $\mu^\xi(x):=\langle\mu(x),\xi\rangle$, where $\langle,\rangle$ is the pairing between $\g^*$ and $\g$.  Then $$\xi_M\hook\omega=-d\mu^\xi.$$
\end{enumerate}
We call $\mu$ a \emph{momentum map}.
\end{definition}

Let $Z:=\mu^{-1}(0)$.

\begin{theorem}[Marsden-Weinstein, Meyer]
If $0$ is a regular value of $\mu$, then $i:Z\hookrightarrow M$ is a smooth embedded $G$-invariant submanifold of $M$.  If in addition $G$ acts freely on $Z$, then the orbit space $M\red{0}G:=Z/G$ is a smooth manifold, $\pi_Z:Z\to M\red{0}G$ is a principal $G$-bundle, and $M\red{0}G$ admits a symplectic form $\omega_0$ satisfying $\pi_Z^*\omega_0=i^*\omega$.
\end{theorem}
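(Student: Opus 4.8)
The plan is to establish the four assertions in turn, with the genuine content concentrated in the construction and nondegeneracy of $\omega_0$. First, since $0$ is by hypothesis a regular value of $\Phi$, the regular value (preimage) theorem makes $Z=\Phi^{-1}(0)$ a smooth embedded submanifold of $M$ of codimension $\dim\g$, with $T_xZ=\ker d\Phi_x$ at each $x\in Z$. Its $G$-invariance is immediate from equivariance: for $x\in Z$ and $g\in G$ we have $\Phi(g\cdot x)=\Ad^*_g\Phi(x)=0$, so $g\cdot x\in Z$. Because $G$ is compact its action on $M$, and hence on $Z$, is proper; combined with the freeness hypothesis, the quotient manifold theorem then yields that $M\red{0}G=Z/G$ is a smooth manifold and that $\pi_Z:Z\to Z/G$ is a principal $G$-bundle (of dimension $\dim M-2\dim G$). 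This disposes of everything except the symplectic form.

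Next I would produce $\omega_0$ by showing that $i^*\omega$ descends through $\pi_Z$. A form on $Z$ descends to $Z/G$ exactly when it is basic, i.e.\ $G$-invariant and horizontal for the bundle $\pi_Z$, so I would verify these two conditions. Invariance of $i^*\omega$ follows from invariance of $\omega$ under the action, which is part of the definition of a Hamiltonian action. For horizontality, note that the vertical directions are spanned by the generators $\xi_M|_x$, $\xi\in\g$, and for $v\in T_xZ=\ker d\Phi_x$ the defining relation $\xi_M\hook\omega=-d\Phi^\xi$ gives $\omega(\xi_M|_x,v)=-\langle d\Phi_x(v),\xi\rangle=0$; thus $\xi_M\hook i^*\omega=0$. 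Being basic, $i^*\omega$ descends to a unique $2$-form $\omega_0$ on $Z/G$ with $\pi_Z^*\omega_0=i^*\omega$, and closedness is then automatic: $\pi_Z^*d\omega_0=d\,i^*\omega=i^*d\omega=0$, and $\pi_Z$ being a surjective submersion forces $d\omega_0=0$.

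The main obstacle is nondegeneracy of $\omega_0$, which rests on one piece of symplectic linear algebra. At $x\in Z$ I would identify $T_{[x]}(Z/G)$ with $T_xZ/T_x(G\cdot x)$ and prove the identity $(T_xZ)^\omega=T_x(G\cdot x)$, where $(\cdot)^\omega$ denotes the symplectic orthogonal in $(T_xM,\omega_x)$. Indeed the momentum relation shows that $v\in\ker d\Phi_x$ if and only if $\omega_x(\xi_M|_x,v)=0$ for all $\xi$, i.e.\ $T_xZ=\ker d\Phi_x=\big(T_x(G\cdot x)\big)^\omega$; applying $(\cdot)^\omega$ and using $(W^\omega)^\omega=W$ gives the claimed identity. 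Freeness ensures $\xi\mapsto\xi_M|_x$ is injective, so $T_x(G\cdot x)$ has the expected dimension. Now if $v\in T_xZ$ satisfies $\omega_x(v,w)=0$ for all $w\in T_xZ$, then $v\in(T_xZ)^\omega=T_x(G\cdot x)$, so $v$ is vertical and $[v]=0$ in the quotient; this is exactly nondegeneracy of $\omega_0$. I expect the bookkeeping in this last identity---keeping straight which spaces are orthogonal to which, and checking that the orbit tangent space really is $\{\xi_M|_x\}$---to be the only delicate point; everything else is a routine application of standard manifold and principal-bundle machinery.
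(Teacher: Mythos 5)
Your proof is correct and complete; the paper itself does not prove this theorem but simply cites Marsden--Weinstein and Meyer, and your argument is the standard one found in those references: regular value theorem plus equivariance for $Z$, the quotient manifold theorem for the principal bundle structure, descent of the basic form $i^*\omega$, and the symplectic linear algebra identity $(T_xZ)^\omega = T_x(G\cdot x)$ for nondegeneracy. All the steps, including the delicate orthogonality bookkeeping, check out.
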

\begin{proof}
(See \cite{MW} and \cite{meyer}.)
\end{proof}

\begin{definition}
$M\red{0}G$ above is called the \emph{symplectic reduced space} of the action, and $\omega_0$ the \emph{reduced symplectic form}.
\end{definition}

If $0$ is not a regular value of $\mu$, then $Z$ may not be a smooth submanifold, and thus $M\red{0}G$ need not be a smooth manifold.  The latter case may also occur if $G$ does not act freely on $Z$.  In such cases, $Z$ is still $G$-invariant, and the quotient space $Z/G$ equipped with the quotient topology is still Hausdorff and locally compact.  We thus obtain the following commutative diagram of continuous maps, where $j:Z/G\hookrightarrow M/G$ is the inclusion, and $i$ is smooth:

\begin{equation}\labell{d:main}
\xymatrix{
Z \ar[r]^{i} \ar[d]_{\pi_Z} & M \ar[d]^{\pi} \\
Z/G \ar[r]_{j} & M/G \\
}
\end{equation}

\subsection*{\emph{Differential Structures}}

\begin{definition}[Differential Space]
Let $X$ be a nonempty set.  A \emph{differential structure} on $X$ is a nonempty family $\mathcal{F}$ of functions into $\RR$, along with the weakest topology on $X$ for which every element of $\mathcal{F}$ is continuous, satisfying the following conditions.
\begin{enumerate}
\item (Smooth Compatibility) For any positive integer $k$, functions $f_1,...,f_k\in\mathcal{F}$, and $F\in\CIN(\RR^k)$, the composition $F(f_1,...,f_k)$ is contained in $\mathcal{F}$.
\item (Locality) Let $f:X\to\RR$ be a function such that for any $x\in X$ there exist an open neighbourhood $U\subseteq X$ of $x$ and a function $g\in\mathcal{F}$ satisfying $f|_U=g|_U$.  Then $f\in\mathcal{F}$.
\end{enumerate}
A set $X$ equipped with a differential structure $\mathcal{F}$ is called a \emph{differential space} and is denoted $(X,\mathcal{F})$.
\end{definition}

\begin{remark}\labell{r:diffstr}
\noindent
\begin{enumerate}
\item Let $X$ be a set and $\mathcal{F}$ a family of real-valued functions on it.  We will call the weakest topology on $X$ such that $\mathcal{F}$ is a set of continuous functions the \emph{topology induced} or \emph{generated} by $\mathcal{F}$, and denote it by $\mathcal{T}_\mathcal{F}$.  A subbasis for this topology is given by $$\{f^{-1}(I)~|~f\in\mathcal{F},~I\text{ is an open interval in $\RR$}\}.$$  In the case that $\mathcal{F}$ is a differential structure, by smooth compatibility and the facts that translation and rescaling are smooth, the subbasis is equal to $$\{f^{-1}((0,1))~|~f\in\mathcal{F}\}.$$  We will often refer to this as the \emph{subbasis induced} or \emph{generated} by $\mathcal{F}$.  Also, the basis comprised of finite intersections of elements of this subbasis we will refer to as the \emph{basis induced} or \emph{generated} by $\mathcal{F}$.  \comment{basis and subbasis remarks needed?}
\item The smooth compatibility condition of a differential structure guarantees that $\mathcal{F}$ is a commutative $\RR$-algebra under pointwise addition and multiplication.
\item The locality condition indicates that a differential structure $\mathcal{F}$ on $X$ induces a sheaf of functions: for any open subset $U$ of $X$, define $\mathcal{F}(U)$ to be all functions $f:U\to\RR$ such that if $x\in U$, then there exist an open neighbourhood $V\subseteq U$ of $x$ and a function $g\in\mathcal{F}$ such that $$g|_V=f|_V.$$
\end{enumerate}
\end{remark}

\begin{example}[Manifolds]
A manifold $M$ comes equipped with the differential structure given by its smooth functions $\CIN(M)$.
\end{example}

\begin{definition}[Functionally Smooth Maps]
Let $(X,\mathcal{F}_X)$ and $(Y,\mathcal{F}_Y)$ be two differential spaces.  A map $F:X\to Y$ is \emph{functionally smooth} if $F^*\mathcal{F}_Y\subseteq\mathcal{F}_X$.  $F$ is called a \emph{functional diffeomorphism} if it is functionally smooth and has a functionally smooth inverse.  Denote the set of functionally smooth maps between $X$ and $Y$ by $\CIN(X,Y)$.
\end{definition}

\begin{remark}
Note that in the literature, differential structures, differential spaces, and functionally smooth maps are sometimes called \emph{Sikorski structures}, \emph{Sikorski spaces}, and \emph{Sikorski smooth} maps, respectively.  See, for example, \cite{stacey}.
\end{remark}

\begin{remark}
A functionally smooth map is continuous with respect to the topologies induced by the differential structures.
\end{remark}

\begin{example}[Smooth Maps Between Manifolds]
Given two manifolds $M$ and $N$, the functionally smooth maps between $M$ and $N$ are exactly the usual smooth maps $\CIN(M,N)$.
\end{example}

\begin{remark}
Differential spaces along with functionally smooth maps form a category.
\end{remark}

Let $X$ be a set, and let $\mathcal{Q}$ be a family of real-valued functions on $X$.  Equip $X$ with the topology induced by $\mathcal{Q}$.  Define a family $\mathcal{F}$ of real-valued functions on $X$ as follows. $f\in\mathcal{F}$ if for any $x\in X$ there exists an open neighbourhood $U\subseteq X$ of $x$, functions $q_1,...,q_k\in\mathcal{Q}$, and a function $F\in\CIN(\RR^k)$ satisfying $$f|_U=F(q_1,...,q_k)|_U.$$  (This is just the set of global sections of the sheafification of $\mathcal{Q}$.)

\begin{lemma}\labell{l:topologies}
The two topologies $\mathcal{T}_\mathcal{Q}$ and $\mathcal{T}_\mathcal{F}$ are equal.
\end{lemma}

\begin{proof}
Since $\mathcal{Q}\subseteq\mathcal{F}$, we have that the subbasis induced by $\mathcal{Q}$ is contained in the subbasis induced by $\mathcal{F}$, and so $\mathcal{T}_\mathcal{Q}\subseteq\mathcal{T}_\mathcal{F}$.  We now wish to show the opposite containment.\\

Fix $f\in\mathcal{F}$ and $x\in X$.  Let $I\subset\RR$ be an open interval containing $f(x)$.  We wish to find a set $W\in\mathcal{T}_\mathcal{Q}$ containing $x$ and contained in $f^{-1}(I)$.  By definition of $\mathcal{F}$, there is some set $U\in\mathcal{T}_\mathcal{Q}$ containing $x$, functions $q_1,...,q_k\in\mathcal{Q}$, and $F\in\CIN(\RR^k)$ such that $f|_U=F(q_1,...,q_k)|_U$.  Let $y=(q_1,...,q_k)(x)$, and let $B=\prod_{i=1}^k(a_i,b_i)$ be an open box containing $y$ and contained in $F^{-1}(I)$.  Then, $(q_1,...,q_k)^{-1}(B)\cap U$ is a set contained in $f^{-1}(I)\cap U\subseteq f^{-1}(I)$.  But, $$(q_1,...,q_k)^{-1}(B)=q_1^{-1}(\pr_1(B))\cap...\cap q_k^{-1}(\pr_k(B)),$$ where $\pr_i$ is the $i$th projection. This intersection is a finite intersection of open sets in $\mathcal{T}_\mathcal{Q}$.  Hence, $(q_1,...,q_k)^{-1}(B)\cap U$ is an open set in $\mathcal{T}_\mathcal{Q}$ containing $x$ and contained in $f^{-1}(I)$.  So let $W:=(q_1,...,q_k)^{-1}(B)\cap U$.
\end{proof}

\begin{proposition}
$(X,\mathcal{F})$ is a differential space.
\end{proposition}

\begin{proof}
First, we show smooth compatibility.  Let $f_1,...,f_k\in\mathcal{F}$ and $F\in\CIN(\RR^k)$.  Then, we want to show $F(f_1,...,f_k)\in\mathcal{F}$.  Fix $x\in X$.  Then for each $i=1,...,k$ there exist an open neighbourhood $U_i$ of $x$, $q_i^1,...,q_i^{m_i}\in\mathcal{Q}$ and $F_i\in\CIN(\RR^{m_i})$ such that $f_i|_{U_i}=F_i(q_i^1,...,q_i^{m_i})|_{U_i}$.  Let $U$ be the intersection of the neighbourhoods $U_i$, which itself is an open neighbourhood of $x$.  Then, $$F(f_1,...,f_k)|_U=F(F_1(q_1^1,...,q_1^{m_1}),...,F_k(q_k^1,...,q_k^{m_k}))|_U.$$ Let $N:=m_1+...+m_k$.  Define $\tilde{F}\in\CIN(\RR^N)$ by $$\tilde{F}(x^1,...,x^N)=F(F_1(x^1,...,x^{m_1}),F_2(x^{m_1+1},...,x^{m_1+m_2}),...,F_k(x^{m_1+...+m_{k-1}+1},...,x^N)).$$ Then $$F(f_1,...,f_k)|_U=\tilde{F}(q_1^1,...q_1^{m_1},q_2^1,...q_2^{m_2},...,q_k^1,...,q_k^{m_k})|_U.$$ By definition of $\mathcal{F}$, we have $F(f_1,...,f_k)\in\mathcal{F}$.\\

Next, we show locality.  Let $f:X\to\RR$ be a function with the property that for every $x\in X$ there is an open neighbourhood $U$ of $x$ and a function $g\in\mathcal{F}$ such that $f|_U=g|_U$.  Fix $x$, and let $U$ and $g$ satisfy this property.  Shrinking $U$ if necessary, there exist $q_1,...,q_k\in\mathcal{Q}$ and $F\in\CIN(\RR^k)$ such that $g|_U=F(q_1,...,q_k)|_U$.  Hence, $f|_U=F(q_1,...,q_k)|_U$.  Since this is true at each $x\in X$, by definition, $f\in\mathcal{F}$.  This completes the proof.
\end{proof}

\begin{definition}[Generated Differential Structures]
We say that the differential structure $\mathcal{F}$ above is \emph{generated} by $\mathcal{Q}$.
\end{definition}

\begin{lemma}
Let $(X,\mathcal{F})$ be a differential space.  Then for any subset $Y\subseteq X$, the subspace topology on $Y$ is the weakest topology for which the restrictions of $\mathcal{F}$ to $Y$ are continuous.
\end{lemma}

\begin{proof}
We first set some notation.  Let $\mathcal{T}_Y$ be the subspace topology on $Y$, and let $\mathcal{G}$ be all restrictions of functions in $\mathcal{F}$ to $Y$.\\

Fix $U\in\mathcal{T}_Y$ and $x\in U$.  We will show that there exists a basic open set $W$ in $\mathcal{T}_\mathcal{G}$ such that $x\in W\subseteq U$.  By definition of the subspace topology on $Y$, there exists an open set $V\in\mathcal{T}_\mathcal{F}$ such that $$U=V\cap Y.$$ There exist $f_1,...,f_k\in\mathcal{F}$ such that $$\tilde{W}:=\bigcap_{i=1}^kf_i^{-1}((0,1))$$ is a basic open set of $X$ containing $x$ and contained in $V$.  Define $W:=\tilde{W}\cap Y$.  Then,
\begin{align*}
W=&~\bigcap_{i=1}^kf_i^{-1}((0,1))\cap Y\\
=&~\bigcap_{i=1}^k(f_i|_Y)^{-1}((0,1)).
\end{align*}
But $f_i|_Y\in\mathcal{G}$, and so $W$ is a basic open set in $\mathcal{T}_{\mathcal{G}}$ that contains $x$ and is contained in $U$.\\

Next, we show that for any $U\in\mathcal{T}_\mathcal{G}$, $U$ is in fact in the subspace topology.  It is sufficient to show this for any basic open set $U$, in the basis generated by $\mathcal{G}$.  To this end, fix a basic open set $U\in\mathcal{T}_\mathcal{G}$ and $x\in U$.  There exist $g_1,...,g_k\in\mathcal{G}$ such that $$U=\bigcap_{i=1}^kg_i^{-1}((0,1)).$$  But then there exist $f_1,...,f_k\in\mathcal{F}$ such that for each $i=1,...,k$ we have $g_i=f_i|_Y$.  Then, $$U=\bigcap_{i=1}^kf_i^{-1}((0,1))\cap Y.$$  Since $\bigcap_{i=1}^kf_i^{-1}((0,1))$ is open on $X$, we have that $U$ is open in the subspace topology on $Y$.  We have shown that the subspace topology on $Y$ and the topology generated by restrictions of functions in $\mathcal{F}$ to $Y$ are one and the same.
\end{proof}

The above lemma allows us to make the following definition.

\begin{definition}[Differential Subspace]
Let $(X,\mathcal{F})$ be a differential space, and let $Y\subseteq X$ be any subset.  Then $Y$, with the subspace topology, acquires a differential structure $\mathcal{F}_Y$ generated by restrictions of functions in $\mathcal{F}$ to $Y$.  That is, $f\in\mathcal{F}_Y$ if and only if for every $x\in Y$ there is an open neighbourhood $U\subseteq X$ of $x$ and a function $\tilde{f}\in\mathcal{F}$ such that $$f|_{U\cap Y}=\tilde{f}|_{U\cap Y}.$$  We call $(Y,\mathcal{F}_Y)$ a \emph{differential subspace} of $X$.
\end{definition}

\begin{definition}[Product Differential Structure]
Let $(X,\mathcal{F})$ and $(Y,\mathcal{G})$ be two differential spaces.  The \emph{product differential space} $(X\times Y,\mathcal{F}\times\mathcal{G})$ is given by the set $X\times Y$ equipped with the differential structure $\mathcal{F}\times\mathcal{G}$, generated by functions of the form $f\circ\pr_X$ for $f\in\mathcal{F}$, and $g\circ\pr_Y$ for $g\in\mathcal{G}$.  Here, $\pr_X$ and $\pr_Y$ are the projections onto $X$ and $Y$, respectively.  In particular, the projection maps are functionally smooth.
\end{definition}

\begin{definition}[Quotient Differential Structure]
Let $(X,\mathcal{F})$ be a differential space, and let $\sim$ be an equivalence relation on $X$.  Then $X/\!\sim$ obtains a differential structure, called the \emph{quotient differential structure}, $\mathcal{G}=\{f:X/\!\sim\;\to\RR~|~\pi^*f\in\mathcal{F}\}$ where $\pi:X\to X/\!\sim$ is the quotient map.
\end{definition}

\begin{remark}
The quotient map $\pi$ above is smooth by definition.  Also,  we do not endow the set $X/\!\sim$ above with the quotient topology.  In general, the topology on $X/\!\sim$ induced by  $\mathcal{G}$ and the quotient topology do not match (see the following example).  In fact, the induced topology is contained in the quotient topology.
\end{remark}

\begin{example}[Quotient Topology Does Not Work]
We give an example to illustrate the issue with topologies mentioned in the above remark.  Consider the quotient space $\RR/I$, where $I$ is the open interval $(0,1)$.  By definition of the quotient topology, letting $\pi$ be the quotient map, we have that $\pi((0,1))$ is a one-point set that is open. $f$ is in the quotient differential structure if its pullback by $\pi$ is in $\CIN(\RR)$.  In this case, $\pi^*f$ is constant on $(0,1)$. But since level sets are closed, we have that $\pi^*f$ is constant on $[0,1]$.  Thus, $f$ is constant on the three-point set $\{\pi(0),\pi(1)\}\cup\pi((0,1))$.  Thus, the pre-image of any open interval of $\RR$ by any function in the quotient differential structure will never be included in the one-point set $\pi((0,1))$.  Thus, the quotient topology is strictly stronger than the topology induced by the quotient differential structure.
\end{example}

\begin{definition}[Subcartesian Space]\labell{d:subcart}
A \emph{subcartesian space} is a paracompact, second-countable, Hausdorff differential space $(S,\CIN(S))$ where for each $x\in S$ there is an open neighbourhood $U\subseteq S$ of $x$, $n\in\NN$, and a diffeomorphism $\varphi:U\to\tilde{U}\subseteq\RR^n$, called a \emph{chart}, onto a differential subspace $\tilde{U}$ of $\RR^n$.  Unless otherwise it is unclear, we shall henceforth call functionally smooth maps between subcartesian spaces simply \emph{smooth}.
\end{definition}

\begin{remark} \noindent
\begin{enumerate}
\item Subcartesian spaces, along with smooth maps between them, form a full subcategory of the category of differential spaces.
\item A subcartesian space admits smooth partitions of unity (see \cite{marshall}).
\item For any subset $A\subseteq\RR^n$, define $\mathfrak{n}(A)$ to be the ideal of all smooth functions on $\RR^n$ whose restrictions to $A$ are identically zero.  Let $S$ be a subcartesian space. Then, for each chart $\varphi:U\to\tilde{U}\subseteq\RR^n$, the set of restrictions of functions in $\CIN(\RR^n)$ to $\tilde{U}$ is isomorphic as an $\RR$-algebra to $\CIN(\RR^n)/\mathfrak{n}(\tilde{U})$.  We thus have $\varphi^*\CIN(\RR^n)\cong\CIN(\RR^n)/\mathfrak{n}(\tilde{U})$ as $\RR$-algebras.
\end{enumerate}
\end{remark}

\begin{proposition}[Closed Differential Subspaces of Subcartesian Spaces]\labell{p:closedsubset}
If $R$ is a closed differential subspace of a subcartesian space $S$, then $\CIN(R)=\CIN(S)|_R$, the restrictions of functions in $\CIN(S)$ to $R$.
\end{proposition}

\begin{proof}
It is clear that $\CIN(S)|_R\subseteq\CIN(R)$.  To show the opposite inclusion, fix $f\in\CIN(R)$.  By definition of $\CIN(R)$, we can find an open covering $\{U_\alpha\}_{\alpha\in A}$ of $R$ such that for each $\alpha$, there is a function $g_\alpha\in\CIN(S)$ satisfying $$g_\alpha|_{U_\alpha}=f|_{U_\alpha}.$$  Let $B=\{0\}\cup A$ (assume here that $A$ does not include $0$).  For each $\alpha\in A$, let $V_\alpha$ be an open subset of $S$ such that $U_\alpha=R\cap V_\alpha$.  Let $V_0$ be the complement of $R$ in $S$ and define $g_0:=0$.  Define $\{\zeta_\beta\}_{\beta\in B}$ to be a partition of unity subordinate to $\{V_\beta\}_{\beta\in B}$.  Let $\tilde{g}:=\sum_{\beta\in B}\zeta_\beta g_\beta$.  Then
\begin{align*}
\tilde{g}|_R=&~\sum_{\beta\in B}\zeta_\beta|_R g_\beta\\
=&~\sum_{\beta\in B}\zeta_\beta f\\
=&~f.
\end{align*}
\end{proof}

Let $G$ be a compact Lie group acting smoothly on a manifold $M$, and let $\pi:M\to M/G$ be the quotient map.  Equip the geometric quotient $M/G$ with the quotient differential structure.  Note that $\pi^*:\CIN(M/G)\to\CIN(M)^G$ is an isomorphism of $\RR$-algebras, where $\CIN(M)^G$ is the algebra of $G$-invariant smooth functions.

\begin{theorem}[Quotients by Compact Group Actions are Subcartesian]\labell{t:quotsubc}
If $G$ is a compact Lie group acting on a manifold $M$, then $M/G$ is a subcartesian space whose topology matches the quotient topology induced by $\pi$.
\end{theorem}

\begin{proof}
The fact that $M/G$ equipped with the quotient differential structure is a subcartesian space is proven by Schwarz in \cite{schwarz1}.  That the quotient topology and the induced topology from $\CIN(M/G)$ are the same is shown by Cushman-\'Sniatycki in \cite{CS}.
\end{proof}

\begin{remark}
The above theorem extends to proper group actions using the Slice Theorem of Palais \cite{palais}.
\end{remark}

We again come back to \hypref{d:main}{Diagram}.  $Z$ comes equipped with a differential structure $\CIN(Z)$ induced by $M$.  In particular, since $Z$ is closed, by \hypref{p:closedsubset}{Proposition} we have that $i^*\CIN(M)=\CIN(Z)$.  Consequently, $Z/G$ is a closed differential subspace of $M/G$.

\begin{theorem}[Symplectic Quotients are Subcartesian]\labell{t:CINZ}
$Z/G$ as a subspace of $M/G$ is a subcartesian space.  Moreover, its subspace differential structure is equal to the quotient differential structure obtained from $Z$.
\end{theorem}

\begin{proof}
Note that $Z/G$ is a closed subset of $M/G$ (and hence is subcartesian), and so $\CIN(Z/G)=\CIN(M/G)|_{Z/G}$ by \hypref{p:closedsubset}{Proposition}.  We now show that $\pi_Z$ is smooth.  Let $f\in\CIN(Z/G)$.  Then there exists $g\in\CIN(M/G)$ such that $f=j^*g$.  Let $\tilde{g}=\pi^*g\in\CIN(M)^G$.  Let $\tilde{f}=i^*\tilde{g}\in\CIN(Z)^G$.  Then, $\tilde{f}=\pi_Z^*f$.\\

Next, since $\pi_Z$ is surjective, $\pi_Z^*$ is injective.  To show that $\pi_Z^*$ is surjective onto $\CIN(Z)^G$, fix $\tilde{f}\in\CIN(Z)^G$. Since $Z$ is closed, applying \hypref{p:closedsubset}{Proposition} once again, there exists $\tilde{g}\in\CIN(M)$ such that $\tilde{f}=i^*\tilde{g}$.  Averaging over $G$, we may assume that $\tilde{g}$ is $G$-invariant.  Thus, there exists $g\in\CIN(M/G)$ such that $\pi^*g=\tilde{g}$.  Thus, $f=j^*g\in\CIN(Z/G)$, and $\pi_Z^*f=\tilde{f}$.  We get that $\pi_Z^*:\CIN(Z/G)\to\CIN(Z)^G$ is an isomorphism of $\RR$-algebras.
\end{proof}

\begin{remark}
The smooth structure $\CIN(Z/G)$ is equal to a smooth structure on $Z/G$ introduced by Arms, Cushman and Gotay in \cite{ACG}.  The isomorphism $\pi_Z^*:\CIN(Z/G)\to\CIN(Z)^G$ is in fact the definition of the latter.
\end{remark}

\subsection*{\emph{Stratified Spaces}}
Unfortunately, in the literature, there are many definitions of stratified spaces, not all of which are equivalent (see \cite{sniatycki}, \cite{pflaum}, \cite{GM}).  For our purposes, we start with a topological definition, following closely the terminology used in \cite{lerman-sjamaar}.  We then transport these concepts into the differential space category, following closely concepts introduced in \cite{sniatycki} and \cite{lusala-sniatycki}.\\

Let $X$ be a Hausdorff, paracompact topological space, and $(A,\leq)$ a partially ordered set.

\begin{definition}[Decomposed Space]
A \emph{decomposition} of $X$ with respect to $(A,\leq)$ is a locally finite partition of $X$, denoted by $\mathcal{P}$, into disjoint, connected, locally closed (topological) manifolds $S_i$, called \emph{pieces} such that the set $\mathcal{P}$ is indexed by $A$, and $i\leq j$ if and only if $S_i\subseteq\overline{S}_j$ if and only if $S_i\cap\overline{S}_j\neq\emptyset$.  The \emph{dimension} of $X$, denoted $\dim(X)$, is the supremum over $A$ of the dimensions of the pieces.  $X$ equipped with a decomposition $\mathcal{P}$ will be referred to as a \emph{decomposed space}, denoted $(X,\mathcal{P})$.  Often we will drop the notation $\mathcal{P}$ when the decomposition has been made clear.
\end{definition}

\begin{remark}
We will only consider decomposed spaces in which the pieces are finite-dimensional.
\end{remark}

\begin{definition}[Depth]
Let $(X,\mathcal{P})$ be a decomposed space, and fix a piece $S\in\mathcal{P}$.  The \emph{depth} of $S$, denoted $\operatorname{depth}_X(S)$ is defined as $$\operatorname{depth}_X(S):=\sup\{n\in\NN~|~S=S_{a_0}\subsetneq S_{a_1}\subsetneq...\subsetneq S_{a_n}\}$$ where each $S_{a_i}\in\mathcal{P}$.  Note the \emph{strict} inclusions in the definition. The \emph{depth} of $X$ is given by $$\operatorname{depth}(X):=\sup\{\operatorname{depth}_X(S_a)~|~a\in A\}.$$
\end{definition}

A stratified space is a decomposed space in which the pieces  fit together in a specific way.  Note that the following definition is recursive (in particular, $F$ will have a smaller depth than $S$).

\begin{definition}[Stratified Space]
A decomposed space $X$ is a \emph{stratified space} if the pieces of $X$, called \emph{strata}, satisfy the following condition.

\begin{itemize}\labell{cond:localtriv}
\item (Local Triviality) For every $x\in X$, there is an open neighbourhood $U\subseteq X$ of $x$, a stratified space $F$ with a distinguished point $o\in F$ such that $\{o\}$ is a stratum of $F$, and a homeomorphism $\varphi:U\to(S\cap U)\times F$ where $S$ is the stratum of $X$ containing $x$.  $\varphi$ is required to satisfy $$\varphi(s)=(s,o)$$ for each $s\in S\cap U$, and to map strata into strata.
\end{itemize}
\end{definition}

\begin{remark}
The above local triviality condition is often written in the literature using cones over stratified spaces instead of $F$.  However, it will be easier to transport the definition we use above into the smooth category.
\end{remark}

\begin{example}\labell{x:square}
Consider the square $[0,1]\times[0,1]$.  The partition given by $$\mathcal{P}=\Big\{\{(0,0)\},\;\{(0,1)\},\;\{(1,0)\},\;\{(1,1)\},\;(0,1)\times\{0\},$$$$
(0,1)\times\{1\},\;\{0\}\times(0,1),\;\{1\}\times(0,1),\;(0,1)^2\Big\}$$ makes the square into a stratified space.
\end{example}

\begin{definition}[Smooth Decomposed Space]
A \emph{smooth decomposed space} is a triple $(X,\mathcal{F},\mathcal{P})$ where $(X,\mathcal{F})$ is a differential space, and $(X,\mathcal{P})$ is a decomposed space with respect to the topology induced by $\mathcal{F}$.  We require that for each piece $S\in\mathcal{P}$, the inclusion map $i_S:S\to X$ induces a smooth manifold structure on $S$.
\end{definition}

\begin{definition}[Smooth Stratified Space]
A smooth decomposed space $X$ is a \emph{smooth stratified space} if the pieces of the decomposition satisfy the following condition.
\begin{itemize}
\item (Smooth Local Triviality) For every $x\in X$, there is an open neighbourhood $U\subseteq X$ of $x$, a smooth stratified space $F$ with a distinguished point $o\in F$ such that $\{o\}$ is a stratum of $F$, and a diffeomorphism $\varphi:U\to(S\cap U)\times F$ where $S$ is the stratum of $X$ containing $x$.  $\varphi$ is required to satisfy $$\varphi(s)=(s,o)$$ for each $s\in S\cap U$, and to map strata into strata.
\end{itemize}
Again, the pieces in the decomposition in this case are called \emph{strata}.
\end{definition}

For our purposes, we will always assume that the differential structure on a smooth stratified space is subcartesian.

\begin{definition}[Smooth Stratified Map]
Let $X$ and $Y$ be smooth stratified spaces, and let $F:X\to Y$ be a smooth map.  Then $F$ is \emph{stratified} if for each stratum $S$ of $X$, $F(S)\subset T$ for some stratum $T$ of $Y$.
\end{definition}

\begin{remark}
Smooth stratified spaces, along with stratified maps, form a category.
\end{remark}

Let $G$ be a compact Lie group acting on a (smooth) manifold $M$.  Let $H$ be a closed subgroup of $G$, and let $M_{(H)}$ be the set of all points in $M$ whose stabiliser is a conjugate of $H$.  Then, $M$ is the disjoint union of the sets $M_{(H)}$ as $H$ runs over closed subgroups of $G$.  The quotient map $\pi:M\to M/G$ partitions $M/G$ into sets $(M/G)_{(H)}:=\pi(M_{(H)})$ as $H$ runs over closed subgroups of $G$.

\begin{theorem}[Orbit-Type Stratification]\labell{t:pot}
\noindent
\begin{enumerate}
\item The partitions on $M$ and $M/G$ defined above yield smooth stratifications with respect to the smooth structures $\CIN(M)$ and $\CIN(M/G)$, respectively, whose strata are given by connected components of the sets $M_{(H)}$ and $(M/G)_{(H)}$.
\item Each subset $M_{(H)}$ is a $G$-invariant submanifold of $M$.  If $M$ and $G$ are connected, then each stratum in the stratification on $M$ is $G$-invariant.
\item If $M$ is connected, then there exists a closed subgroup $K$ of $G$ such that the strata contained in $M_{(K)}$ form an open dense subset of $M$, and hence $(M/G)_{(K)}$ is an open dense subset of $M/G$.
\item The orbit map $\pi:M\to M/G$ is stratified with respect to the stratifications described above.
\end{enumerate}
\end{theorem}
\begin{proof}
The last statement above is clear by definition of the stratifications. See \cite{DK} and \cite{CS} for the first three statements.
\end{proof}

\begin{definition}\labell{d:symplquotstrat}
We call the above stratifications \emph{orbit-type stratifications} of each respective space.
\end{definition}

We now return to the setting above in \hypref{d:main}{Diagram}.  For each closed subgroup $H$ of $G$, let $Z_{(H)}:=M_{(H)}\cap Z$.  Note that this is a $G$-invariant subset of $Z$ since both $Z$ and $M_{(H)}$ are invariant.  Let $(Z/G)_{(H)}:=\pi(Z_{(H)})$.  For each nonempty such subset, let $\pi_{(H)}:=\pi|_{Z_{(H)}}$ and $i_{(H)}:=i|_{Z_{(H)}}$.  Finally, let $\pi_Z:=\pi|_Z$.

\begin{theorem}[Orbit-Type Stratification (Hamiltonian Version)]\labell{t:poth}
\noindent
\begin{enumerate}
\item The partitions on $Z$ and $Z/G$ defined above yield smooth stratifications with respect to $\CIN(Z)$ and $\CIN(Z/G)$, respectively, whose strata are given by connected components of the sets $Z_{(H)}$ and $(Z/G)_{(H)}$.
\item Each subset $Z_{(H)}$ is a $G$-invariant submanifold of $M$.
\item If $M$ is connected and $\mu$ is a proper map, then there exists a closed subgroup $K$ of $G$ such that the strata contained in $Z_{(K)}$ form an open dense subset of $Z$, and hence $(Z/G)_{(K)}$ is an open dense subset of $Z/G$.
\item The orbit map $\pi_Z:Z\to Z/G$ along with the inclusions $i$ and $j$ are stratified with respect to the stratifications described above.
\item For each nonempty stratum $(Z/G)_{(H)}$, there exists a symplectic form $\omega_{(H)}$ on $(Z/G)_{(H)}$ such that $\pi_{(H)}^*\omega_{(H)}=i_{(H)}^*\omega$.
\end{enumerate}
\end{theorem}

\begin{proof}
See \cite{lerman-sjamaar}.
\end{proof}

\begin{definition}
The stratifications defined on $Z$ and $Z/G$ above are called \emph{orbit-type stratifications} of each space.
\end{definition}

\begin{remark}
Note that $$(Z/G)_{(H)}=Z_{(H)}/G=(Z/G)\cap(M/G)_{(H)}.$$
\end{remark}

To summarise, \hypref{d:main}{Diagram} sits in the category of smooth stratified spaces.

\subsection*{\emph{Basic Forms}}

We continue with the setting given by \hypref{d:main}{Diagram}.

\begin{definition}
A differential form $\mu\in\Omega^k(M)$ is \emph{basic} if it is
\begin{enumerate}
\item $G$-\emph{invariant}: for any $g\in G$, $g^*\mu=\mu$, and
\item \emph{horizontal}: for any $\xi\in\g$, $$\xi_M\hook\mu=0.$$
\end{enumerate}
Denote the set of all basic $k$-forms on $M$ by $\Omega^k_{basic}(M)$.
\end{definition}

\begin{theorem}[Koszul]\labell{t:basic}
$(\Omega^*_{basic}(M),d)$ forms a subcomplex of $(\Omega^*(M),d)$, and the corresponding cohomology $H_{basic}^*(M)$ is isomorphic to the singular cohomology $H^*(M/G)$ with real coefficients.
\end{theorem}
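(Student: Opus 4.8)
The plan is to first verify that $d$ preserves basic forms, and then to identify the basic cohomology with $H^*(M/G)$ by a sheaf-theoretic argument.

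For the subcomplex claim, suppose $\mu\in\Omega^k_{basic}(M)$. Since pullback commutes with the exterior derivative, $g^*(d\mu)=d(g^*\mu)=d\mu$ for all $g\in G$, so $d\mu$ is $G$-invariant. For horizontality, apply Cartan's formula $\mathcal{L}_{\xi_M}=\xi_M\hook d+d(\xi_M\hook\,\cdot\,)$ to $\mu$: because $\mu$ is horizontal the term $d(\xi_M\hook\mu)$ vanishes, and because $G$ is connected the $G$-invariance of $\mu$ gives $\mathcal{L}_{\xi_M}\mu=\frac{d}{dt}\Big|_{t=0}\exp(t\xi)^*\mu=0$. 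Hence $\xi_M\hook d\mu=0$ for every $\xi\in\g$, so $d\mu$ is again basic and $(\Omega^*_{basic}(M),d)$ is a subcomplex.

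For the cohomology, I would descend to the quotient and invoke the abstract de Rham theorem. Define a complex of sheaves $\mathcal{B}^\bullet$ on $M/G$ by assigning to each open $U\subseteq M/G$ the group $\Omega^\bullet_{basic}(\pi^{-1}(U))$, with differential induced by $d$ (well defined by the previous paragraph and compatible with restriction); its complex of global sections is exactly $(\Omega^*_{basic}(M),d)$. The strategy is to show that $\mathcal{B}^\bullet$ is a fine resolution of the constant sheaf $\underline{\RR}$ on $M/G$. Fineness follows from the existence of $G$-invariant partitions of unity on $M$ subordinate to preimages of an open cover of $M/G$: start from an ordinary partition of unity and average over the compact group $G$, noting that multiplication by a $G$-invariant function preserves the basic condition. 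Granting fineness and the resolution property, the sheaves have no higher cohomology and the abstract de Rham theorem identifies $H^*_{basic}(M)$ with the sheaf cohomology $H^*(M/G;\underline{\RR})$, which equals the singular cohomology $H^*(M/G)$ with real coefficients since $M/G$ is paracompact, Hausdorff and locally contractible.

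The essential analytic input, and the step I expect to be the main obstacle, is the resolution property, i.e.\ a local Poincar\'e lemma for basic forms that holds even at the singular (non-free) orbits. Here I would use the slice theorem in the form invoked in the proof of \hypref{t:quotsubc}{Theorem}: near a point with stabiliser $H$ and slice representation $V$, a $G$-invariant neighbourhood is $G$-equivariantly diffeomorphic to a neighbourhood of the zero section in $G\times_H V$. Restriction along the inclusion of the slice identifies $G$-basic forms on $G\times_H V$ with $H$-basic forms on $V$, reducing the problem to a linear $H$-representation. On $V$ the radial contraction $(t,v)\mapsto tv$ is $H$-equivariant and fixes the origin, so the standard homotopy operator underlying the Poincar\'e lemma commutes with the $H$-action and with each $\eta_V\hook\,\cdot\,$ for $\eta$ in the Lie algebra of $H$; hence it carries $H$-basic forms to $H$-basic forms. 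This shows that a closed basic $k$-form with $k\geq1$ is locally exact through basic forms, while in degree $0$ the closed (locally constant) $G$-invariant functions recover the sections of $\underline{\RR}$ on the connected pieces of $M/G$. Assembling these local computations establishes that $\mathcal{B}^\bullet$ resolves $\underline{\RR}$ and completes the proof.
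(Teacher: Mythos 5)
The paper gives no argument for \hypref{t:basic}{Theorem}; its proof is the single line ``See \cite{koszul53}.''\ Your proposal therefore supplies a genuine proof where the paper defers entirely to the literature, and the proof is correct --- it is essentially the classical sheaf-theoretic argument behind the citation. The subcomplex step via Cartan's formula is fine (one small remark: $G$-invariance of $\mu$ already forces $\mathcal{L}_{\xi_M}\mu=0$ with no appeal to connectedness of $G$; connectedness is only relevant for the converse implication). The three substantive inputs in the cohomological half all check out: (i) $U\mapsto\Omega^\bullet_{basic}(\pi^{-1}(U))$ is a sheaf because invariance and horizontality are local conditions, and it is fine because averaging a partition of unity over the compact group produces $G$-invariant functions, multiplication by which preserves basicity; (ii) the restriction/induction correspondence between $G$-basic forms on $G\times_H V$ and $H$-basic forms on the slice $V$ is the standard one and commutes with $d$; (iii) for a linear $H$-action one has $\eta_V|_{tv}=t\,\eta_V|_v$, so inserting $\eta_V$ into the radial homotopy operator reproduces (up to the factor $1/t$) a contraction of the original horizontal form and hence vanishes --- the homotopy operator preserves basic forms and the local Poincar\'e lemma survives at singular orbits. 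Since the slice theorem exhibits $M/G$ locally as a cone $V/H$, the quotient is locally contractible (and paracompact), so sheaf cohomology of $\underline{\RR}$ agrees with singular cohomology, completing the identification. What your writeup buys over the bare citation is a self-contained verification tailored to the compact-group setting of the paper; in a final version, steps (ii) and (iii) each deserve the short explicit computation sketched above.
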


\begin{proof}
See \cite{koszul}.
\end{proof}

\begin{remark}\labell{r:basic}
If $M/G$ is a smooth manifold, then for each $\mu\in\Omega^k_{basic}(M)$, there exists a unique $\eta\in\Omega^k(M/G)$ such that $\pi^*\eta=\mu$.  Thus, in light of the above theorem and the de Rham theorem, $\pi^*:(\Omega^*(M/G),d)\to(\Omega^*_{basic}(M),d)$ is an isomorphism of complexes.
\end{remark}

\subsection*{\emph{Left-Invariant and Hamiltonian Vector Fields}}

We continue to be in the setting of a Hamiltonian group action (\hypref{d:main}{Diagram}).

\begin{definition}
A vector field $X$ on $M$ is \emph{left-invariant} if for any $g\in G$, we have $g_*X=X$.  Denote the set of left-invariant vector fields by $\Vect(M)^G$.  A vector field $X$ on $M$ is \emph{Hamiltonian} if there exists a function $f\in\CIN(M)$ satisfying $X\hook\omega=-df$.  In this case, it is customary to denote $X$ as $X_f$.  This induces an $\RR$-linear map $\CIN(M)\to\Vect(M)$.  Denote the image $\ham(M)$, and the left-invariant Hamiltonian vector fields by $\ham(M)^G$.
\end{definition}

\begin{remark}
$\Vect(M)^G$, $\ham(M)$ and $\ham(M)^G$ are all Lie subalgebras of $\Vect(M)$ under the commutator bracket.  Note also that for any $X\in\Vect(M)^G$, the local flow $(t,x)\mapsto\exp(tX)(x)$ of $X$ is $G$-equivariant: for any $g\in G$, $$g\cdot\exp(tX)(x)=\exp(tX)(g\cdot x).$$
\end{remark}

\begin{example}
Let $\xi\in\g$.  Then the induced vector field $\xi_M$ is Hamiltonian.
\end{example}

\begin{proposition}\labell{p:leftinvtham}
\noindent
\begin{enumerate}
\item If $X$ is a left-invariant vector field, then for any $H\leq G$ such that $M_{(H)}$ is nonempty, $X$ is tangent to $M_{(H)}$, and so restricts to a vector field on $M_{(H)}$.
\item If $X$ is a left-invariant Hamiltonian vector field, then $X$ is tangent to the $G$-manifold $Z_{(H)}$, and so $i_{(H)}^*X$ is well-defined.
\end{enumerate}
\end{proposition}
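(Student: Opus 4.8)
The plan is to prove both statements by the same mechanism: show that the local flow $\exp(tX)$ preserves the relevant orbit-type subset, and then deduce tangency from the fact that this subset is an embedded submanifold (\hypref{t:pot}{Theorem}). The underlying principle is that if every integral curve of $X$ meeting an embedded submanifold $N$ remains in $N$, then $X$ is tangent to $N$; since tangency is local, it does not matter that $M_{(H)}$ or $Z_{(H)}$ may be disconnected or that the flow is only local.

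For (1), recall that left-invariance of $X$ makes its local flow $G$-equivariant, $g\cdot\exp(tX)(x)=\exp(tX)(g\cdot x)$. I would first check that this forces the flow to preserve stabilisers exactly, $\Stab(\exp(tX)(x))=\Stab(x)$ for all $x$ and all $t$ in the domain: the inclusion $\Stab(x)\subseteq\Stab(\exp(tX)(x))$ is immediate from equivariance, and the reverse inclusion follows by applying $\exp(-tX)$. Consequently the flow carries $M_{(H)}$ into itself, so any integral curve of $X$ issuing from a point of $M_{(H)}$ stays in $M_{(H)}$, and tangency follows from the principle above.

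For (2), by (1) it suffices to show in addition that the flow preserves $Z=\Phi^{-1}(0)$; then it preserves $Z_{(H)}=Z\cap M_{(H)}$, and the same tangency argument yields a well-defined restriction $i_{(H)}^*X$ on the $G$-manifold $Z_{(H)}$. Write $X=X_f$. I would first arrange that $f$ be $G$-invariant: since the action of each $g\in G$ is a symplectomorphism, one has $g_*X_f=X_{(g^{-1})^*f}$, so left-invariance of $X_f$ gives $X_{g^*f}=X_f$ for every $g$. Averaging with respect to Haar measure, $\bar f:=\int_G g^*f\,dg$ is $G$-invariant, and $X_{\bar f}=X_f$ because $f\mapsto X_f$ is $\RR$-linear and commutes with the integral. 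Replacing $f$ by $\bar f$ and reading the defining relations $\xi_M\hook\omega=-d\Phi^\xi$ and $X\hook\omega=-df$ together, I obtain for each $\xi\in\g$
\[
d\Phi^\xi(X)=\omega(X,\xi_M)=-\xi_M(f)=0,
\]
the last equality by $G$-invariance of $f$. Hence each $\Phi^\xi$ is constant along the flow of $X$, so the flow preserves $Z$.

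The formal ingredients—equivariance of the flow, preservation of stabilisers, and the passage from flow-invariance to tangency—are routine. The step requiring the most care is the reduction to a $G$-invariant Hamiltonian in (2): establishing $g_*X_f=X_{(g^{-1})^*f}$ from $g^*\omega=\omega$, and verifying that averaging commutes with the assignment $f\mapsto X_f$. Once $f$ is $G$-invariant, the key vanishing $d\Phi^\xi(X)=-\xi_M(f)=0$ is immediate.
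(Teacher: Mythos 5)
Your proposal is correct and follows essentially the same route as the paper: equivariance of the flow forces preservation of stabilisers (hence tangency to $M_{(H)}$), and for (2) one averages the Hamiltonian to a $G$-invariant $f$ and computes $X\Phi^\xi=\omega(X,\xi_M)=-df(\xi_M)=0$. The extra details you supply (the reverse stabiliser inclusion via $\exp(-tX)$, and the verification that $g_*X_f=X_{(g^{-1})^*f}$ so that averaging does not change the vector field) are points the paper leaves implicit, but the argument is the same.
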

\begin{proof}
\noindent
\begin{enumerate}
\item Let $\psi_t$ be the flow of $X$.  Then $\psi_t$ is $G$-equivariant: if $x\in M$ and $g\in G$, then $\psi_t(g\cdot x)=g\cdot\psi_t(x)$ for all $t$ in the flow domain.  If $g\in\Stab(x)=:H$, then $\psi_t(x)=\psi_t(g\cdot x)=g\cdot\psi_t(x)$, and so $\psi_t$ preserves stabilisers along its trajectories.  Thus, these trajectories remain in $M_{(H)}$, and so $X$ is tangent to $M_{(H)}$.
\item Using the first statement of the proposition, it is enough to show that $X\mu^\xi=0$ for any $\xi\in\g$.  Let $f\in\CIN(M)$ such that $X=X_f$.  By averaging over $G$, we can choose $f$ such that it is $G$-invariant. Thus,
    \begin{align*}
    X\mu^\xi=&~d\mu^\xi(X)\\
    =&~\omega(X,\xi_M)\\
    =&~-df(\xi_M)=0.
    \end{align*}
    This completes the proof.
\end{enumerate}
\end{proof}

\begin{lemma}\labell{l:jpois1}
For any $H\leq G$ such that $Z_{(H)}$ is nonempty, there is a Lie algebra homomorphism $\ham(M)^G\to\ham((Z/G)_{(H)})$ sending $X_f$ to $X_h$ where $h=j^*((\pi^*)^{-1}f)|_{(Z/G)_{(H)}}$ and $(\pi^*)^{-1}:\CIN(M)^G\to\CIN(M/G)$.
\end{lemma}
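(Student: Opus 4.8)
The plan is to realise $X_h$ as the projection to the quotient $(Z/G)_{(H)}$ of the restriction of $X_f$ to the stratum $Z_{(H)}$, and then to read off well-definedness, linearity, and the bracket identity from this geometric description. Recall first that each orbit-type stratum $(Z/G)_{(H)}=Z_{(H)}/G$ is a symplectic manifold carrying a reduced symplectic form $\omega_{(H)}$ characterised by $\pi_{(H)}^*\omega_{(H)}=i_{(H)}^*\omega$ (Lerman--Sjamaar, \cite{lerman-sjamaar91}); this is what makes $\ham((Z/G)_{(H)})$ meaningful. For well-definedness, given $X=X_f\in\ham(M)^G$ the averaging argument in \hypref{p:leftinvtham}{Proposition} lets me take the primitive $f$ to be $G$-invariant, and since $M$ is connected any two such primitives differ by an additive constant; hence $h$ is determined up to a constant and $X_h$ is unambiguous. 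By \hypref{p:leftinvtham}{Proposition} $X_f$ is tangent to $Z_{(H)}$, so $i_{(H)}^*X_f$ is a well-defined $G$-invariant vector field on the $G$-manifold $Z_{(H)}$; because $X_f$ is left-invariant and $\pi_{(H)}$ is a surjective submersion onto the (constant orbit type) quotient, $i_{(H)}^*X_f$ is $\pi_{(H)}$-projectable to a vector field $\bar{X}$ on $(Z/G)_{(H)}$.

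The heart of the argument is the identification $\bar{X}=X_h$. From the commutativity of \hypref{d:main}{Diagramme} together with the definition $h=j^*((\pi^*)^{-1}f)|_{(Z/G)_{(H)}}$, one obtains $\pi_{(H)}^*h=i_{(H)}^*f$. Since $\bar{X}$ is $\pi_{(H)}$-related to $i_{(H)}^*X_f$ and $X_f$ is tangent to $Z_{(H)}$, I would compute
\begin{align*}
\pi_{(H)}^*(\bar{X}\hook\omega_{(H)})
&=(i_{(H)}^*X_f)\hook(\pi_{(H)}^*\omega_{(H)})
=(i_{(H)}^*X_f)\hook(i_{(H)}^*\omega)\\
&=i_{(H)}^*(X_f\hook\omega)
=-i_{(H)}^*(df)
=-d(\pi_{(H)}^*h)
=\pi_{(H)}^*(-dh).
\end{align*}
As $\pi_{(H)}$ is a surjective submersion, $\pi_{(H)}^*$ is injective on differential forms, so $\bar{X}\hook\omega_{(H)}=-dh$; that is, $\bar{X}=X_h$. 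This simultaneously shows $X_h\in\ham((Z/G)_{(H)})$ and pins down the map as ``restrict to $Z_{(H)}$, then project''. I expect this identification to be the main obstacle: it rests on having the reduced symplectic form on the stratum in place and on the careful handling of the interior product under the embedding $i_{(H)}$ and the submersion $\pi_{(H)}$ — in particular the step $(i_{(H)}^*X_f)\hook i_{(H)}^*\omega=i_{(H)}^*(X_f\hook\omega)$, which uses tangency, and the injectivity of $\pi_{(H)}^*$.

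Everything else is then formal. $\RR$-linearity is immediate, since $f\mapsto h$ and each of the restriction and projection operations is linear, and constants are annihilated by passing to $X_h$. For the bracket, I would use that $\ham(M)^G$ is a Lie subalgebra, so $[X_f,X_{f'}]=X_{f''}\in\ham(M)^G$; restriction to the invariant submanifold $Z_{(H)}$ preserves brackets, giving $i_{(H)}^*X_{f''}=[i_{(H)}^*X_f,i_{(H)}^*X_{f'}]$, and $\pi_{(H)}$-relatedness is preserved under the bracket, so this projects to $[\bar{X},\bar{X}']=[X_h,X_{h'}]$. By the identification of the previous paragraph the image of $[X_f,X_{f'}]$ under the map is precisely $X_{h''}$, and hence equals $[X_h,X_{h'}]$, proving that the map is a Lie algebra homomorphism.
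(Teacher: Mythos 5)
Your proposal is correct and follows essentially the same route as the paper: restrict $X_f$ (with $G$-invariant primitive $f$) to $Z_{(H)}$ via \hypref{p:leftinvtham}{Proposition}, push down along $\pi_{(H)}$, and identify the resulting vector field as $X_h$ by pulling back the contraction with $\omega_{(H)}$ and invoking injectivity of $\pi_{(H)}^*$. The only difference is cosmetic: you additionally spell out the well-definedness up to additive constants and the bracket-preservation via $\pi_{(H)}$-relatedness, which the paper leaves implicit.
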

\begin{proof}
Fix $X_f\in\ham(M)^G$ where $f\in\CIN(M)^G$ and $H\leq G$ such that $Z_{(H)}$ is nonempty. Then there exists $g\in\CIN(M/G)$ such that $\pi^*g=f$.  By \hypref{p:leftinvtham}{Proposition}, $X_f$ restricts to a $G$-invariant vector field $(X_f)|_{Z_{(H)}}$ on $Z_{(H)}$.  This descends via $\pi_{(H)}$ to a vector field $Y$ on $(Z/G)_{(H)}$.  We claim that $Y$ is the Hamiltonian vector field of the smooth function $j^*g|_{(Z/G)_{(H)}}$.  Indeed,
\begin{align*}
\pi_{(H)}^*(Y\hook\omega_{(H)})=&~X_f|_{Z_{(H)}}\hook i_{(H)}^*\omega\\
=&~i_{(H)}^*(X_f\hook\omega)\\
=&~i_{(H)}^*(-df)\\
=&~(\pi\circ i_{(H)})^*(-dg)\\
=&~(j\circ\pi_{(H)})^*(-dg)\\
=&~\pi_{(H)}^*(-dj^*g|_{(Z/G)_{(H)}}).
\end{align*}

By \hypref{t:basic}{Theorem} and \hypref{r:basic}{Remark}, $\pi_{(H)}^*$ is an isomorphism onto its image, and so we conclude that $Y\hook\omega_{(H)}=-dj^*g$.  Letting $h=j^*g|_{(Z/G)_{(H)}}$, we thus have $Y=X_{h}$.
\end{proof}

For any $H\leq G$ such that $Z_{(H)}$ is nonempty, let $X_f,X_g\in\ham(M)^G$.  There exist $Y_1,Y_2\in\ham((Z/G)_{(H)})^G$ satisfying \hypref{l:jpois1}{Lemma}.  We have the following second lemma.

\begin{lemma}\labell{l:jpois2}
$i_{(H)}^*(\omega(X_f,X_g))=\pi_{(H)}^*(\omega_{(H)}(Y_1,Y_2)).$
\end{lemma}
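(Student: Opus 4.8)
The plan is to recognize that both sides of the claimed identity are smooth functions on $Z_{(H)}$, and to rewrite the left-hand side so that the defining relation of the reduced symplectic form on the stratum converts it directly into the right-hand side. First I would observe that by \hypref{p:leftinvtham}{Proposition} the left-invariant Hamiltonian vector fields $X_f$ and $X_g$ are tangent to $Z_{(H)}$, so their restrictions $X_f|_{Z_{(H)}}$ and $X_g|_{Z_{(H)}}$ are genuine vector fields on the stratum, pushing forward under $i_{(H)}$ to $(X_f)_z$ and $(X_g)_z$ at each $z\in Z_{(H)}$. Since $\omega(X_f,X_g)$ is the smooth function $z\mapsto\omega_z((X_f)_z,(X_g)_z)$ on $M$ and the vectors involved are tangent to $Z_{(H)}$, I can express its restriction as a contraction of the pulled-back form:
\begin{equation*}
i_{(H)}^*(\omega(X_f,X_g))=(i_{(H)}^*\omega)(X_f|_{Z_{(H)}},X_g|_{Z_{(H)}}).
\end{equation*}

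Next I would invoke the relation $i_{(H)}^*\omega=\pi_{(H)}^*\omega_{(H)}$ satisfied by the reduced symplectic form on the orbit-type stratum — precisely the relation already used in the proof of \hypref{l:jpois1}{Lemma} when descending $X_f\hook\omega$ through $\pi_{(H)}$. Substituting gives
\begin{equation*}
i_{(H)}^*(\omega(X_f,X_g))=(\pi_{(H)}^*\omega_{(H)})(X_f|_{Z_{(H)}},X_g|_{Z_{(H)}}).
\end{equation*}
Recall from the construction in \hypref{l:jpois1}{Lemma} that $X_f|_{Z_{(H)}}$ descends through $\pi_{(H)}$ to $Y_1$ and $X_g|_{Z_{(H)}}$ to $Y_2$; that is, $X_f|_{Z_{(H)}}$ is $\pi_{(H)}$-related to $Y_1$ and $X_g|_{Z_{(H)}}$ is $\pi_{(H)}$-related to $Y_2$. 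Since $\pi_{(H)}$ is a surjective submersion onto $(Z/G)_{(H)}$, pairing the pulled-back form against $\pi_{(H)}$-related fields evaluates pointwise as $\omega_{(H)}$ applied to the pushed-forward vectors, so the right-hand side above is exactly $\pi_{(H)}^*(\omega_{(H)}(Y_1,Y_2))$, which is the claim.

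The argument is largely formal once the two ingredients — tangency of $X_f,X_g$ to the stratum and the $\pi_{(H)}$-relatedness of their restrictions to $Y_1,Y_2$ — are in hand, both of which are furnished by \hypref{p:leftinvtham}{Proposition} and \hypref{l:jpois1}{Lemma}. The one point requiring care, which I expect to be the main (if minor) obstacle, is the bookkeeping of the pullback of a $2$-form along the submersion $\pi_{(H)}$: I must verify that $(\pi_{(H)}^*\omega_{(H)})(A,B)$ at a point $z$ equals $\omega_{(H)}$ evaluated on $d\pi_{(H)}(A_z)$ and $d\pi_{(H)}(B_z)$, and that these pushforwards are precisely $(Y_1)_{\pi_{(H)}(z)}$ and $(Y_2)_{\pi_{(H)}(z)}$, so that the value is $\omega_{(H)}(Y_1,Y_2)\circ\pi_{(H)}$ on the nose. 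This is routine, but it is the only place where the submersion structure of $\pi_{(H)}$ on the orbit-type stratum is genuinely used.
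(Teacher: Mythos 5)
Your proposal is correct and follows essentially the same three-step computation as the paper's own proof: restrict to the stratum via \hypref{p:leftinvtham}{Proposition}, substitute $i_{(H)}^*\omega=\pi_{(H)}^*\omega_{(H)}$, and use the $\pi_{(H)}$-relatedness of $X_f|_{Z_{(H)}},X_g|_{Z_{(H)}}$ to $Y_1,Y_2$ from \hypref{l:jpois1}{Lemma}. The only difference is that you spell out the pullback bookkeeping that the paper leaves implicit.
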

\begin{proof}
By \hypref{l:jpois1}{Lemma}, since $X_f\in\ham^G(M)$, there exists $Y_1\in\ham((Z/G)_{(H)})$ which is Hamiltonian with respect to a function $h_1\in\CIN((Z/G)_{(H)})$ satisfying $h_1=j^*((\pi^*)^{-1}f)|_{(Z/G)_{(H)}}$.  Similarly, there exists $Y_2\in\ham((Z/G)_{(H)})$ which is Hamiltonian with respect to $h_2=j^*((\pi^*)^{-1}f)|_{(Z/G)_{(H)}}$.  So,
\begin{align*}
i_{(H)}^*(\omega(X_f,X_g))=&~i_{(H)}^*\omega(X_f|_{Z_{(H)}},X_g|_{Z_{(H)}})&&\text{by \hypref{p:leftinvtham}{Proposition}}\\
=&~\pi_{(H)}^*\omega_{(H)}(X_f|_{Z_{(H)}},X_g|_{Z_{(H)}})\\
=&~\pi_{(H)}^*(\omega_{(H)}(Y_1,Y_2)).
\end{align*}
\end{proof}

\subsection*{\emph{Poisson and Symplectic Structures}}

\begin{definition}
A Poisson bracket on a differential space $(X,\mathcal{F})$ is a Lie bracket $\pois{}{}$ satisfying for any $f,g,h\in\mathcal{F}$: $$\pois{f}{gh}=h\pois{f}{g}+g\pois{f}{h}.$$
\end{definition}

\begin{example}\labell{x:stdpois}
Define $\pois{}{}$ on $(M,\omega)$ by $$\pois{f}{g}:=\omega(X_f,X_g).$$  This is the standard Poisson structure on a symplectic manifold.
\end{example}

Since the manifolds $(Z/G)_{(H)}$ are symplectic, their rings of functions admit Poisson structures $\pois{\cdot}{\cdot}_{(H)}$ as in \hypref{x:stdpois}{Example}.  In fact, we can define a Poisson bracket on all of $Z/G$ as follows.

\begin{definition}
Let $f,g\in\CIN(Z/G)$, and let $x\in(Z/G)_{(H)}$ for some $H\leq G$.  Then define $$\pois{f}{g}_{Z/G}(x):=\pois{f|_{(Z/G)_{(H)}}}{g|_{(Z/G)_{(H)}}}_{(H)}(x).$$
\end{definition}

\begin{proposition}[Lerman-Sjamaar]
The above bracket defines a Poisson bracket on $\CIN(Z/G)$.
\end{proposition}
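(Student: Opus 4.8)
The plan is to verify the Poisson bracket axioms, but the only genuinely nontrivial point is \emph{closure}: that $\pois{f}{g}_{Z/G}$ lands back in $\CIN(Z/G)$. Once this is established, $\RR$-bilinearity, antisymmetry, the Jacobi identity and the Leibniz rule follow by restriction to strata, since each $\pois{\cdot}{\cdot}_{(H)}$ is already a Poisson bracket and restriction to a stratum is an $\RR$-algebra homomorphism. So I would treat smoothness first and the algebraic identities last.

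To prove closure, the idea is to exhibit $\pois{f}{g}_{Z/G}$ as the restriction to $Z/G$ of a single globally defined $G$-invariant function on $M$. Given $f,g\in\CIN(Z/G)$, use \hypref{t:CINZ}{Theorem} to write $f=j^*g_f$ and $g=j^*g_g$ with $g_f,g_g\in\CIN(M/G)$, and set $\tilde f:=\pi^*g_f$ and $\tilde g:=\pi^*g_g\in\CIN(M)^G$, so that $\pi_Z^*f=i^*\tilde f$ and $\pi_Z^*g=i^*\tilde g$. Form the standard Poisson bracket on $M$ (see \hypref{x:stdpois}{Example}), $\tilde F:=\omega(X_{\tilde f},X_{\tilde g})$. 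Because $\tilde f,\tilde g$ are $G$-invariant and $\omega$ is preserved by the action, $X_{\tilde f},X_{\tilde g}\in\ham(M)^G$ and $\tilde F\in\CIN(M)^G$. Hence $i^*\tilde F\in\CIN(Z)^G$, and by \hypref{t:CINZ}{Theorem} there is a unique $F\in\CIN(Z/G)$ with $\pi_Z^*F=i^*\tilde F$.

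Next I would identify $F$ with $\pois{f}{g}_{Z/G}$ stratum by stratum. Fix $H\le G$ with $Z_{(H)}$ nonempty. By \hypref{l:jpois1}{Lemma}, $X_{\tilde f}$ and $X_{\tilde g}$ descend along $\pi_{(H)}$ to the Hamiltonian vector fields $Y_1,Y_2$ of $f|_{(Z/G)_{(H)}}$ and $g|_{(Z/G)_{(H)}}$ on $(Z/G)_{(H)}$, so by the definition of the bracket $\pois{f}{g}_{Z/G}|_{(Z/G)_{(H)}}=\omega_{(H)}(Y_1,Y_2)$. Restricting $\pi_Z^*F=i^*\tilde F$ to $Z_{(H)}$ and invoking \hypref{l:jpois2}{Lemma} gives
\begin{align*}
\pi_{(H)}^*\big(F|_{(Z/G)_{(H)}}\big)
&=i_{(H)}^*\tilde F=i_{(H)}^*\big(\omega(X_{\tilde f},X_{\tilde g})\big)\\
&=\pi_{(H)}^*\big(\omega_{(H)}(Y_1,Y_2)\big).
\end{align*}
Since $\pi_{(H)}$ is a surjective submersion, $\pi_{(H)}^*$ is injective, whence $F|_{(Z/G)_{(H)}}=\pois{f}{g}_{Z/G}|_{(Z/G)_{(H)}}$. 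As the strata $(Z/G)_{(H)}$ cover $Z/G$ by the orbit-type decomposition of \hypref{t:pot}{Theorem}, I conclude $\pois{f}{g}_{Z/G}=F\in\CIN(Z/G)$.

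With closure in hand, the remaining axioms are routine: for each $H$ the restriction map $\CIN(Z/G)\to\CIN((Z/G)_{(H)})$ is an $\RR$-algebra homomorphism and, by definition, $\pois{f}{g}_{Z/G}|_{(Z/G)_{(H)}}=\pois{f|_{(Z/G)_{(H)}}}{g|_{(Z/G)_{(H)}}}_{(H)}$, so bilinearity, antisymmetry, Jacobi and Leibniz each hold after restricting to every stratum because $\pois{\cdot}{\cdot}_{(H)}$ satisfies them; being pointwise identities on a covering family of strata, they hold on all of $Z/G$. The main obstacle is precisely the closure step: a function smooth on each stratum need not be smooth on the stratified quotient, so the stratum-wise axioms alone do not suffice, and it is exactly here that \hypref{l:jpois1}{Lemma}, \hypref{l:jpois2}{Lemma}, and the isomorphism of \hypref{t:CINZ}{Theorem} are indispensable for realizing the bracket as a restriction of a global invariant function on $M$.
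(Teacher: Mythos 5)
Your argument is correct, but note that the paper itself gives no proof of this proposition --- it simply cites Lerman--Sjamaar --- so what you have written is a reconstruction rather than a match against an argument in the text. That said, your reconstruction uses precisely the machinery the paper sets up for this purpose: \hypref{t:CINZ}{Theorem} to realise $f,g$ as $i^*\tilde f, i^*\tilde g$ for $G$-invariant $\tilde f,\tilde g\in\CIN(M)^G$, \hypref{l:jpois1}{Lemma} to identify the descended vector fields $Y_1,Y_2$ as the Hamiltonian vector fields of the restrictions of $f,g$ to each stratum, and \hypref{l:jpois2}{Lemma} to conclude that the globally defined function $F$ with $\pi_Z^*F=i^*\pois{\tilde f}{\tilde g}$ restricts on each stratum to $\omega_{(H)}(Y_1,Y_2)$; this is exactly the computation the paper performs later when proving that $j$ is a Poisson morphism. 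You are also right to isolate closure as the only nontrivial point: once $\pois{f}{g}_{Z/G}\in\CIN(Z/G)$ is known, the iterated brackets in the Jacobi identity make sense and all the algebraic axioms are pointwise identities that can be checked stratum by stratum, where they reduce to the manifold case. The one small point worth polishing is the injectivity of $\pi_{(H)}^*$: surjectivity of $\pi_{(H)}$ on points already suffices for injectivity of the pullback on functions, so you do not need to invoke the submersion property. I see no gap.
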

\begin{proof}
See \cite{lerman-sjamaar}.
\end{proof}

We can also define a Poisson structure on $\CIN(M/G)$:

\begin{definition}
Let $f,g\in\CIN(M/G)$.  Then define $\pois{f}{g}_{M/G}:=(\pi^*)^{-1}\pois{\pi^*f}{\pi^*g}$, where $(\pi^*)^{-1}$ is the inverse of the isomorphism $\pi^*:\CIN(M/G)\to\CIN(M)^G$.
\end{definition}

\begin{proposition}
The above bracket defines a Poisson structure on $\CIN(M/G)$.
\end{proposition}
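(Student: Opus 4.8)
The plan is to observe that, once well-definedness is verified, the proposed bracket is merely the transport of the symplectic Poisson structure on the subalgebra $\CIN(M)^G$ through the algebra isomorphism $\pi^*$, so that every Poisson axiom is inherited formally.

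The one substantive point is to check that $\pois{\pi^*f}{\pi^*g}$ actually lies in $\CIN(M)^G$ for $f,g\in\CIN(M/G)$, so that $(\pi^*)^{-1}$ may be applied. I would use the naturality of the symplectic bracket of \hypref{x:stdpois}{Example} with respect to symplectomorphisms: if $\phi:M\to M$ satisfies $\phi^*\omega=\omega$, then $\phi^*X_u=X_{\phi^*u}$ for every $u\in\CIN(M)$, since $(\phi^*X_u)\hook\omega=\phi^*(X_u\hook\omega)=-d(\phi^*u)$. Consequently $\pois{\phi^*u}{\phi^*v}=\omega(\phi^*X_u,\phi^*X_v)=\phi^*(\omega(X_u,X_v))=\phi^*\pois{u}{v}$. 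Since the $G$-action preserves $\omega$, each $g\in G$ acts as such a symplectomorphism; applying the identity to the $G$-invariant functions $u=\pi^*f$ and $v=\pi^*g$ gives $g^*\pois{\pi^*f}{\pi^*g}=\pois{g^*\pi^*f}{g^*\pi^*g}=\pois{\pi^*f}{\pi^*g}$. Hence $\pois{\pi^*f}{\pi^*g}\in\CIN(M)^G$, and $\pois{f}{g}_{M/G}$ is well-defined.

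This same computation shows that $\CIN(M)^G$ is closed under $\pois{}{}$, so that $(\CIN(M)^G,\pois{}{})$ is a Poisson subalgebra of $(\CIN(M),\pois{}{})$; bilinearity, antisymmetry, the Jacobi identity and the Leibniz rule all hold on $\CIN(M)^G$ because they hold on all of $\CIN(M)$. By \hypref{t:quotsubc}{Theorem}, $\pi^*:\CIN(M/G)\to\CIN(M)^G$ is an isomorphism of $\RR$-algebras, and $\pois{}{}_{M/G}$ is by definition the bracket it pulls back. Transporting a Poisson bracket along an algebra isomorphism preserves all its axioms: bilinearity and antisymmetry follow from $\RR$-linearity of $(\pi^*)^{-1}$, the Jacobi identity follows because $(\pi^*)^{-1}$ is a bijection intertwining the two brackets, and the Leibniz rule follows from $\pois{\pi^*f}{\pi^*(gh)}=\pois{\pi^*f}{(\pi^*g)(\pi^*h)}=(\pi^*h)\pois{\pi^*f}{\pi^*g}+(\pi^*g)\pois{\pi^*f}{\pi^*h}$ after applying the ring isomorphism $(\pi^*)^{-1}$. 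I expect no real obstacle here: all the content sits in the well-definedness step, namely that the symplectic bracket of two $G$-invariant functions is again $G$-invariant, which is immediate from $G$ acting by symplectomorphisms.
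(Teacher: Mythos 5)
Your proposal is correct and follows the same route as the paper: the paper's proof simply asserts that $\CIN(M)^G$ is a Poisson subalgebra of $\CIN(M)$ and that $\pi^*$ is an algebra isomorphism, then concludes. You have merely supplied the details the paper leaves implicit, namely the verification via $g^*X_u=X_{g^*u}$ that the bracket of two $G$-invariant functions is $G$-invariant, and the formal transport of the Poisson axioms along the isomorphism.
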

\begin{proof}
$\CIN(M)^G$ is a \emph{Poisson subalgebra} of $\CIN(M)$ and $\pi^*$ is an isomorphism between $\CIN(M/G)$ and $\CIN(M)^G$. The result follows.
\end{proof}

\begin{definition}
Let $(X,\mathcal{F},\pois{}{}_X)$ and $(Y,\mathcal{G},\pois{}{}_Y)$ be differential spaces equipped with Poisson structures.  A smooth map $F:X\to Y$ is \emph{Poisson} if for every $f,g\in\mathcal{G}$, $F^*(\pois{f}{g}_Y)=\pois{F^*f}{F^*g}_X$.
\end{definition}

\begin{proposition}
$\pi$ and $j$ are Poisson morphisms.
\end{proposition}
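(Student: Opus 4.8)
The plan is to check the defining identity $F^*\pois{f}{g}_Y=\pois{F^*f}{F^*g}_X$ separately for $F=\pi$ and for $F=j$, in each case reducing to structure already in place.

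For $\pi$ the claim is essentially immediate from the construction of the bracket on $M/G$. By definition $\pois{f}{g}_{M/G}=(\pi^*)^{-1}\pois{\pi^*f}{\pi^*g}$ for $f,g\in\CIN(M/G)$, where the inner bracket is the symplectic Poisson bracket on $M$ of \hypref{x:stdpois}{Example} and lands in $\CIN(M)^G$ because the $G$-invariant functions form a Poisson subalgebra. Applying $\pi^*$ to both sides gives $\pi^*\pois{f}{g}_{M/G}=\pois{\pi^*f}{\pi^*g}$, which is exactly the statement that $\pi$ is Poisson; no further argument is needed.

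For $j$ I would argue pointwise over the orbit-type strata. Fix $f,g\in\CIN(M/G)$, set $\tilde f:=\pi^*f$ and $\tilde g:=\pi^*g$ in $\CIN(M)^G$, fix a stratum $(Z/G)_{(H)}$ and a point $x$ in it, and choose $z\in Z_{(H)}$ with $\pi_Z(z)=x$. On the left-hand side, unwinding the definition of $\pois{\cdot}{\cdot}_{M/G}$ and using the commutativity $\pi\circ i=j\circ\pi_Z$ of \hypref{d:main}{Diagramme}, I expect to compute
\[(j^*\pois{f}{g}_{M/G})(x)=\pois{\tilde f}{\tilde g}(i(z))=\omega(X_{\tilde f},X_{\tilde g})(i(z)),\]
with $X_{\tilde f},X_{\tilde g}\in\ham(M)^G$. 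On the right-hand side, the restrictions $(j^*f)|_{(Z/G)_{(H)}}$ and $(j^*g)|_{(Z/G)_{(H)}}$ are precisely the Hamiltonians $h_1,h_2$ assigned to $X_{\tilde f},X_{\tilde g}$ by \hypref{l:jpois1}{Lemma} (here one uses $(\pi^*)^{-1}\tilde f=f$), so the stratumwise bracket gives $\pois{j^*f}{j^*g}_{Z/G}(x)=\omega_{(H)}(Y_1,Y_2)(x)$ for the associated fields $Y_1=X_{h_1}$, $Y_2=X_{h_2}$ on $(Z/G)_{(H)}$.

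It then remains only to identify these two evaluations, and this is exactly the content of \hypref{l:jpois2}{Lemma}: pulling that identity back along $i_{(H)}$ and $\pi_{(H)}$ and reading it off at $z$ yields $\omega(X_{\tilde f},X_{\tilde g})(i(z))=\omega_{(H)}(Y_1,Y_2)(x)$, so both sides of the Poisson condition agree at $x$; as $x$ and $H$ are arbitrary this gives $j^*\pois{f}{g}_{M/G}=\pois{j^*f}{j^*g}_{Z/G}$. The main point to verify carefully --- and the only real obstacle --- is that the vector fields $Y_1,Y_2$ entering the definition of the bracket on $Z/G$ coincide with the images of $X_{\tilde f},X_{\tilde g}$ under the homomorphism of \hypref{l:jpois1}{Lemma}, so that \hypref{l:jpois2}{Lemma} applies verbatim; once that matching is confirmed, the rest is bookkeeping through the commutative square.
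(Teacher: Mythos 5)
Your proposal is correct and follows essentially the same route as the paper's own proof: $\pi$ is Poisson by the definition of the bracket on $\CIN(M/G)$, and for $j$ one evaluates stratumwise, matches the Hamiltonians $h_1=j^*f|_{(Z/G)_{(H)}}$, $h_2=j^*g|_{(Z/G)_{(H)}}$ via Lemma~\ref{l:jpois1}, and concludes with Lemma~\ref{l:jpois2}. The ``matching'' step you flag as the only real obstacle is exactly the identity $(\pi^*)^{-1}\pi^*f=f$ that the paper also records, so nothing further is needed.
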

\begin{proof}
By definition of the Poisson structure on $\CIN(M/G)$, $\pi$ is a Poisson map.  As for $j$, fix $z\in Z_{(H)}$ and let $x=\pi_{(H)}(z)$. Let $f,g\in\CIN(M/G)$.  Then,
\begin{align*}
j^*\pois{f}{g}_{M/G}(x)=&~\pois{f}{g}_{M/G}(j(x))\\
=&~\pois{\pi^*f}{\pi^*g}(i(z))\\
=&~\omega(X_{\pi^*f},X_{\pi^*g})(i(z))\\
=&~i_{(H)}^*(\omega(X_{\pi^*f},X_{\pi^*g}))(z)\\
=&~\pi_{(H)}^*(\omega_{(H)}(Y_1,Y_2))(z)&&\text{by \hypref{l:jpois2}{Lemma}}
\end{align*}
where $Y_1=X_{h_1}$ and $Y_2=X_{h_2}$ with $h_1=j^*((\pi^*)^{-1}\pi^*f)|_{(Z/G)_{(H)}}=j^*f|_{(Z/G)_{(H)}}$, and a similar formula for $h_2$ replacing $f$ with $g$.  Thus, we have
$$
j^*\pois{f}{g}_{M/G}(x)=\pois{j^*f|_{(Z/G)_{(H)}}}{j^*g|_{(Z/G)_{(H)}}}_{(H)}(x)
=\pois{j^*f,j^*g}_{Z/G}(x).
$$
This completes the proof.
\end{proof}

Before continuing, we summarise our setting so far.  \hypref{d:main}{Diagram} is in the category of smooth stratified spaces; $M$, $M/G$, and $Z/G$ come equipped with Poisson structures on their rings of smooth functions, and $\pi$ and $j$ are Poisson morphisms with respect to these Poisson structures.
\section{The Zariski Tangent Bundle}\labell{s:zariski}

In this section, we review the basics of subcartesian space theory; in particular, those pertaining to the Zariski tangent space.  For more details, see \cite{LSW}.  Fix a subcartesian space $S$.

\begin{definition}[Zariski Tangent Bundle]
Given a point $x\in S$, a \emph{derivation} of $\CIN(S)$ at $x$ is a linear map $v:\CIN(S)\to\RR$ that satisfies Leibniz' rule: for all $f,g\in\CIN(S)$, $$v(fg)=f(x)v(g)+g(x)v(f).$$  The set of all derivations of $\CIN(S)$ at $x$ forms a vector space, called the \emph{(Zariski) tangent space} of $x$, and is denoted $T_xS$. Define the \emph{(Zariski) tangent bundle} $TS$ to be the (disjoint) union $$TS:=\bigcup_{x\in S}T_xS.$$  Denote the canonical projection $TS\to S$ by $\tau$.
\end{definition}

$TS$ is a subcartesian space with its differential structure generated by functions $f\circ\tau$ and $df$ where $f\in\CIN(S)$ and $d$ is the differential operator $df(v):=v(f)$.  The projection $\tau$ is smooth with respect to this differential structure.  Given a chart $\varphi:U\to\tilde{U}\subseteq\RR^n$ on $S$, $(\varphi\circ\tau,\varphi_*|_{\varphi\circ\tau}):TS\to T\RR^n\cong\RR^{2n}$ is a fibrewise linear chart on $TS$.  We will denote this chart by $\varphi_*$ henceforth.

\begin{definition}[Pushforward]
Let $R$ and $S$ be subcartesian spaces, and let $F:R\to S$ be a smooth map.  Then there is an induced fibrewise linear smooth map $F_*:TR\to TS$ defined by $$(F_*v)f=v(F^*f)$$ for all $v\in TR$ and $f\in\CIN(S)$.  $F_*$ satisfies the following commutative diagram.

$$\xymatrix{
TR \ar[r]^{F_*} \ar[d]_{\tau} & TS \ar[d]^{\tau} \\
R \ar[r]_{F} & S \\
}$$

$F_*$ is called the \emph{pushforward} of $F$, and is sometimes denoted as $dF$ or $TF$.
\end{definition}

We recall some notation.  For a subset $A\subseteq\RR^n$, let $\mathfrak{n}(A)$ be the ideal of the ring of smooth functions on $\RR^n$ consisting of functions that vanish on $A$.

\begin{proposition}[Local Representatives of Vectors]\labell{p:charTS}
Let $x\in S$ and let $\varphi:U\to\tilde{U}\subseteq\RR^n$ be a chart about $x$.  Then, $\tilde{v}\in T_{\varphi(x)}\RR^n$ is equal to $\varphi_*v$ for some $v\in T_xS$ if and only if $\tilde{v}(\mathfrak{n}(\varphi(U)))=\{0\}$.
\end{proposition}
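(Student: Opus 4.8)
The plan is to prove both implications directly from the definition of the pushforward $\varphi_*$ together with the identification $\varphi^*\CIN(\RR^n)\cong\CIN(\RR^n)/\mathfrak{n}(\tilde U)$ recorded in the remarks, where throughout I write $\tilde U:=\varphi(U)$ (so $\mathfrak n(\varphi(U))=\mathfrak n(\tilde U)$, and $\varphi$ is a diffeomorphism onto $\tilde U$).

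For the forward implication, suppose $\tilde v=\varphi_* v$ for some $v\in T_x S$. Given $g\in\mathfrak n(\tilde U)$, i.e. $g|_{\tilde U}=0$, the pullback $\varphi^* g=g\circ\varphi$ is the zero function on $U$ since $\varphi(U)=\tilde U$. As $v$ is linear it annihilates the zero function, so $\tilde v(g)=(\varphi_* v)(g)=v(\varphi^* g)=0$, giving $\tilde v(\mathfrak n(\tilde U))=\{0\}$. This direction is immediate.

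For the converse, which is the substantive part, I would start from a $\tilde v\in T_{\varphi(x)}\RR^n$ with $\tilde v(\mathfrak n(\tilde U))=\{0\}$ and build $v\in T_x S$ with $\varphi_* v=\tilde v$. Since derivations at a point are local (if $f\in\CIN(S)$ vanishes near $x$, choosing a bump function $\chi$ with $\chi(x)=1$ and small support, which exists because $S$ admits smooth partitions of unity, gives $\chi f\equiv 0$ and hence $0=v(\chi f)=v(f)$), it suffices to produce a derivation at $x$ of the chart algebra $\CIN(U)$; this extends canonically to an element of $T_x S$ by restriction. Given $f\in\CIN(U)$, the function $f\circ\varphi^{-1}$ lies in $\CIN(\tilde U)\cong\CIN(\RR^n)/\mathfrak n(\tilde U)$, so there is $\tilde f\in\CIN(\RR^n)$ with $\tilde f|_{\tilde U}=f\circ\varphi^{-1}$; I then set $v(f):=\tilde v(\tilde f)$. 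The crux is well-definedness: any two such extensions $\tilde f_1,\tilde f_2$ differ by an element of $\mathfrak n(\tilde U)$, which $\tilde v$ annihilates by hypothesis, so $v(f)$ is independent of the extension chosen. Linearity is clear, and Leibniz' rule follows because $\tilde f\tilde g$ extends $(f\circ\varphi^{-1})(g\circ\varphi^{-1})$ while $\tilde f(\varphi(x))=f(x)$ and $\tilde g(\varphi(x))=g(x)$; hence $v\in T_x S$. Finally, for $h\in\CIN(\RR^n)$ one has $(h\circ\varphi)\circ\varphi^{-1}=h|_{\tilde U}$, so $h$ itself serves as an extension of $(\varphi^* h)\circ\varphi^{-1}$ and $(\varphi_* v)(h)=v(\varphi^* h)=\tilde v(h)$, giving $\varphi_* v=\tilde v$.

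I expect the only genuine subtlety to be the locality step that identifies $T_x S$ with the derivations of the chart algebra $\CIN(U)$ at $x$; everything else is bookkeeping with the ideal $\mathfrak n(\tilde U)$, whose annihilation by $\tilde v$ is precisely the condition that renders the candidate derivation well defined. The symmetry between the two directions is clean: the forward direction shows the condition is necessary because pullbacks of $\mathfrak n(\tilde U)$ vanish on $U$, and the converse shows it is sufficient because that same vanishing is exactly what is needed to lift $\tilde v$ through the quotient $\CIN(\RR^n)/\mathfrak n(\tilde U)$.
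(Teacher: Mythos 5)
The paper does not prove this proposition itself; it defers to \cite{lsw10}. Your direct argument is the standard one and is essentially correct: the forward direction is immediate from $(\varphi_*v)(g)=v(\varphi^*g)$ and the vanishing of $\varphi^*g$ on $U$ for $g\in\mathfrak{n}(\varphi(U))$, and the converse correctly identifies the hypothesis $\tilde{v}(\mathfrak{n}(\tilde{U}))=\{0\}$ as exactly what is needed to lift $\tilde{v}$ to a well-defined derivation of the function algebra of $\tilde{U}$, hence of $\CIN(S)$ at $x$ via locality.

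One step deserves more care. You write that $f\circ\varphi^{-1}\in\CIN(\tilde{U})\cong\CIN(\RR^n)/\mathfrak{n}(\tilde{U})$ and conclude that a \emph{global} extension $\tilde{f}\in\CIN(\RR^n)$ with $\tilde{f}|_{\tilde{U}}=f\circ\varphi^{-1}$ exists. That identification is not what the paper's remark asserts: the remark says $\varphi^*\CIN(\RR^n)\cong\CIN(\RR^n)/\mathfrak{n}(\tilde{U})$, i.e.\ the subalgebra of \emph{globally restrictable} functions is the quotient, whereas the differential structure $\CIN(\tilde{U})$ of a differential subspace consists of functions that only \emph{locally} extend, and in general this is strictly larger. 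So for an arbitrary $f\in\CIN(U)$ your formula $v(f):=\tilde{v}(\tilde{f})$ is not yet defined. The fix is routine and does not change the architecture of your proof: take a local extension $\tilde{f}$ of $f\circ\varphi^{-1}$ near $\varphi(x)$, cut it off with a bump function $\chi$ supported in the extension's domain with $\chi\equiv 1$ near $\varphi(x)$, and set $v(f):=\tilde{v}(\chi\tilde{f})$. Well-definedness then follows because for two such choices the difference, after multiplying by a suitable bump function, lies in $\mathfrak{n}(\tilde{U})$, and the Leibniz rule for $\tilde{v}$ shows the bump factor contributes nothing at $\varphi(x)$. With that adjustment your argument is complete, and it is a reasonable self-contained substitute for the citation.
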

\begin{proof}
See \cite{LSW}
\end{proof}

\begin{definition}[Derivations of $\CIN(S)$]
A \emph{(global) derivation} of $\CIN(S)$ is a linear map $X:\CIN(S)\to\CIN(S)$ that satisfies Leibniz' rule: for any $f,g\in\CIN(S)$, $$X(fg)=fX(g)+gX(f).$$  Denote the $\CIN(S)$-module of all derivations by $\Der\CIN(S)$.
\end{definition}

\begin{proposition}[$\Der\CIN(S)$ is a Lie Algebra]
The set of derivations of $\CIN(S)$ is a Lie algebra under the commutator bracket, and can be identified with the smooth sections of $\tau:TS\to S$.
\end{proposition}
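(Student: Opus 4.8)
The plan is to treat the two assertions separately, handling the Lie-algebra structure first and then the identification with sections.

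For the Lie-algebra claim, I would verify directly that the commutator $[X,Y]:=X\circ Y-Y\circ X$ of two global derivations is again a global derivation. The composites $X\circ Y$ and $Y\circ X$ are second-order operators and individually fail Leibniz' rule, but expanding $X(Y(fg))$ and $Y(X(fg))$ using Leibniz for $X$ and for $Y$ and then subtracting, the mixed first-order terms $X(f)Y(g)$ and $X(g)Y(f)$ cancel, leaving $[X,Y](fg)=f\,[X,Y](g)+g\,[X,Y](f)$. Since $[X,Y]$ is visibly $\RR$-linear and sends $\CIN(S)$ into $\CIN(S)$, it lies in $\Der\CIN(S)$. Bilinearity, antisymmetry, and the Jacobi identity are then the standard purely formal properties of operator commutators, so this part is routine.

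For the identification, I would set up two mutually inverse constructions. Given a global derivation $X$, for each $x\in S$ define $X_x:\CIN(S)\to\RR$ by $X_x(f):=X(f)(x)$; evaluating the global Leibniz rule at $x$ shows $X_x\in T_xS$, so $x\mapsto X_x$ is a section $\sigma_X$ of $\tau$. Conversely, given any section $\sigma$ of $\tau$, the pointwise derivations $\sigma(x)$ assemble into a linear operator $X_\sigma$ on $\CIN(S)$ via $X_\sigma(f)(x):=\sigma(x)(f)$, which satisfies Leibniz pointwise, hence globally. A quick check gives $\sigma_{X_\sigma}=\sigma$ and $X_{\sigma_X}=X$, so the two constructions are inverse to one another and $\RR$-linear.

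The heart of the matter, and the step I expect to be the main obstacle, is reconciling the two notions of smoothness: I must show that $X_\sigma$ maps $\CIN(S)$ into $\CIN(S)$ precisely when $\sigma$ is smooth as a map of subcartesian spaces. Here I would use that $\CIN(TS)$ is generated by the functions $f\circ\tau$ and $df$ for $f\in\CIN(S)$. Because smoothness of a map into a differential space can be tested on a generating family (any smooth target function is locally $F(q_1,\dots,q_k)$ with the $q_i$ among the generators, and the differential-structure axioms give $\sigma^*F(q_1,\dots,q_k)=F(\sigma^*q_1,\dots,\sigma^*q_k)\in\CIN(S)$ once each $\sigma^*q_i\in\CIN(S)$), the section $\sigma$ is smooth iff $\sigma^*(f\circ\tau)$ and $\sigma^*(df)$ lie in $\CIN(S)$ for every $f$. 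The first pulls back to $f\circ\tau\circ\sigma=f$ automatically, since $\tau\circ\sigma=\id_S$; the second pulls back to the function $x\mapsto (df)(\sigma(x))=\sigma(x)(f)=X_\sigma(f)(x)$. Thus $\sigma$ is smooth exactly when $X_\sigma(f)\in\CIN(S)$ for all $f$, which is precisely the condition that $X_\sigma$ be a global derivation. Applying this to $\sigma=\sigma_X$ shows that $\sigma_X$ is smooth whenever $X\in\Der\CIN(S)$, since then $X(f)=\sigma_X^*(df)\in\CIN(S)$, thereby completing the bijection between $\Der\CIN(S)$ and the smooth sections of $\tau$.
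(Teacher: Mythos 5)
Your argument is correct: the commutator computation is standard, and the identification of derivations with smooth sections via testing smoothness of $\sigma$ against the generators $f\circ\tau$ and $df$ of $\CIN(TS)$ is exactly the right mechanism (the only point you gloss over is that continuity of $\sigma$ also follows from checking the generators, since they generate the topology on $TS$ as well). The paper itself gives no proof of this proposition, deferring entirely to the reference \cite{lsw10}, so there is nothing in-text to compare against; your write-up supplies the standard argument one would expect to find there.
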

\begin{proof}
See \cite{LSW}.
\end{proof}

\begin{proposition}[Local Representatives of Derivations]\labell{p:charDer}
Let $x\in S$, and let $\varphi:U\to\tilde{U}\subseteq\RR^n$ be a chart about $x$.  Let $\tilde{X}\in\Der\CIN(\RR^n)$.  Then $\tilde{X}$ satisfies $$\varphi_*(X|_U)=\tilde{X}|_{\tilde{U}}$$ for some derivation $X\in\Der\CIN(S)$ if and only if $$\tilde{X}(\mathfrak{n}(\tilde{U}))\subseteq\mathfrak{n}(\tilde{U}).$$ Moreover, for any $X\in\Der\CIN(S)$, there exist an open neighbourhood $V\subseteq U$ of $x$ and $\tilde{X}\in\Der\CIN(\RR^n)$ satisfying $\varphi_*(X|_V)=\tilde{X}|_{\varphi(V)}$. We call $\tilde{X}$ a \emph{local extension} or a \emph{local representative} of $X$ with respect to $\varphi$.
\end{proposition}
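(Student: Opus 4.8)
The plan is to reduce this global statement to the pointwise characterization already established in \hypref{p:charTS}{Proposition}, together with the elementary algebraic fact that a derivation of a ring descends to a quotient by an ideal precisely when it preserves that ideal. Throughout I use the identification $\varphi^*\CIN(\RR^n)\cong\CIN(\RR^n)/\mathfrak{n}(\tilde{U})$ recorded in the remarks on subcartesian spaces, under which $\CIN(U)$ is realized as the algebra of restrictions $F|_{\tilde{U}}$ with $F\in\CIN(\RR^n)$. For the ``only if'' direction, suppose $\varphi_*(X|_U)=\tilde{X}|_{\tilde{U}}$ for some $X\in\Der\CIN(S)$. Fixing $y\in\tilde{U}$ and writing $x=\varphi^{-1}(y)$, the tangent vector $\tilde{X}_y\in T_y\RR^n$ is then $\varphi_*(X_x)$, so by \hypref{p:charTS}{Proposition} it annihilates $\mathfrak{n}(\tilde{U})$. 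Since $\tilde{X}_y(g)=(\tilde{X}g)(y)$, letting $y$ range over $\tilde{U}$ yields $(\tilde{X}g)|_{\tilde{U}}=0$ for every $g\in\mathfrak{n}(\tilde{U})$, which is exactly $\tilde{X}(\mathfrak{n}(\tilde{U}))\subseteq\mathfrak{n}(\tilde{U})$.

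For the ``if'' direction, assume $\tilde{X}(\mathfrak{n}(\tilde{U}))\subseteq\mathfrak{n}(\tilde{U})$. Then $\tilde{X}$ descends to a well-defined derivation $\bar{X}$ of the quotient $\CIN(\RR^n)/\mathfrak{n}(\tilde{U})\cong\CIN(\tilde{U})$ via $\bar{X}(F|_{\tilde{U}}):=(\tilde{X}F)|_{\tilde{U}}$, the invariance of the ideal being exactly what makes this independent of the chosen extension $F$. Transporting $\bar{X}$ through $\varphi$ produces a derivation of $\CIN(U)$ whose pointwise pushforward is $\tilde{X}|_{\tilde{U}}$, by the converse half of \hypref{p:charTS}{Proposition}; realizing this as (the restriction of) a global section $X\in\Der\CIN(S)$ is then carried out through the sheaf/module structure of derivations on a subcartesian space (see \cite{lsw10}).

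For the ``moreover'', I construct a local extension by prescribing components. Let $x^1,\dots,x^n$ be the coordinate functions on $\RR^n$ and put $a^i:=\varphi_*(X|_U)(x^i|_{\tilde{U}})\in\CIN(\tilde{U})$. Each $a^i$ lies in $\CIN(\RR^n)|_{\tilde{U}}$ and hence extends to a smooth function $\tilde{a}^i$ on a neighbourhood of $\varphi(x)$ in $\RR^n$; after cutting off with a bump function and shrinking $U$ to a smaller $V$ on which the extensions and the cutoff are compatible, I set $\tilde{X}:=\sum_i\tilde{a}^i\,\partial_i\in\Der\CIN(\RR^n)$ and check $\varphi_*(X|_V)=\tilde{X}|_{\varphi(V)}$. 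The verification rests on the fact that a derivation of the restriction algebra $\CIN(\tilde{U})$ is determined by its action on the $x^i|_{\tilde{U}}$, via a Hadamard/Taylor expansion (the ``chain rule'' for derivations), which is precisely why the neighbourhood must be shrunk from $U$ to $V$.

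I expect the main obstacle to be the globalization in the ``if'' direction: one cannot simply multiply the locally constructed derivation by a bump function, since that would destroy the required equality $\varphi_*(X|_U)=\tilde{X}|_{\tilde{U}}$ wherever the bump differs from $1$. Arranging genuine agreement therefore has to proceed through the sheaf structure of $\Der\CIN(S)$ rather than by a naive cutoff, and it is this same phenomenon that accounts for the ``moreover'' being stated only on a shrunken domain $V$, where the ambient extension $\tilde{X}$ can be controlled.
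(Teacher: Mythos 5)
The paper offers no proof of this proposition---it simply cites \cite{lsw10}---so your proposal can only be measured against the standard argument. Two of your three parts are sound and are essentially that argument: the ``only if'' direction correctly reduces to the pointwise statement of \hypref{p:charTS}{Proposition} applied at each $y\in\tilde{U}$, and the ``moreover'' correctly builds $\tilde{X}=\sum_i\tilde{a}^i\partial_i$ from extensions of the component functions $\varphi_*(X|_U)(x^i|_{\tilde{U}})$, using that a point derivation of a finitely generated differential structure is determined by its values on the generators.

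The gap is in the ``if'' direction, and you have named it without closing it. Descending $\tilde{X}$ to $\CIN(\RR^n)/\mathfrak{n}(\tilde{U})\cong\varphi^*\CIN(\RR^n)$ and transporting through $\varphi$ yields, at best, a derivation of $\CIN(U)$; the proposition asks for a global $X\in\Der\CIN(S)$ with $\varphi_*(X|_U)=\tilde{X}|_{\tilde{U}}$ on all of $U$, and ``the sheaf/module structure of $\Der\CIN(S)$'' does not supply one: the sheaf property lets you restrict and glue sections, but a section over $U$ need not be the restriction of a global section. Concretely, take $S=\RR$, $U=(0,1)$, and a chart $\varphi:(0,1)\to\RR=\tilde{U}$ chosen so that $1/\varphi'$ (for instance $1/\varphi'(t)=t^2(2+\sin(1/t))$ near $t=0$) does not extend to a $C^1$ function at $t=0$; then $\tilde{X}=\partial_x$ trivially preserves $\mathfrak{n}(\RR)=\{0\}$, yet any $X\in\Der\CIN(\RR)$ with $\varphi_*(X|_U)=\partial_x$ would have to restrict to $(1/\varphi')\,\partial_t$ on $(0,1)$, which is impossible for a globally smooth derivation. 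So the implication as you argue it---and indeed as literally stated---only holds after either reading $X\in\Der\CIN(U)$ or shrinking to a smaller neighbourhood $V$ of $x$ with $\overline{V}\subseteq U$, where a bump-function cutoff of the locally constructed derivation does the job (exactly the mechanism, and the reason for the shrunken domain, in the ``moreover'' clause). Your proof leaves precisely this decisive step as a citation, so the ``if'' direction is asserted rather than established; to repair it, prove the local version explicitly: construct $X_0\in\Der\CIN(U)$ from $\bar{X}$ using ideal-preservation plus locality of derivations, then cut off near $x$ to land in $\Der\CIN(S)$ at the cost of replacing $U$ by $V$.
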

\begin{proof}
See \cite{LSW}.
\end{proof}

\begin{definition}[Locally Trivial Surjections]
Let $R$ and $S$ be subcartesian spaces, and let $f$ be a surjective smooth map between them.  Then $f:R\to S$ is \emph{locally trivial} if for every $x\in S$ there exist an open neighbourhood $U\subseteq S$ of $x$, a subcartesian space $F$, and a diffeomorphism $\psi:f^{-1}(U)\to U\times F$ such that the following diagram commutes ($\operatorname{pr}_1$ being the projection of the first component.)

$$\xymatrix{
f^{-1}(U) \ar[rr]^{\psi} \ar[dr]_f & & U\times F \ar[dl]^{\operatorname{pr}_1}\\
& U & \\
}$$
\end{definition}

\begin{theorem}[Local Triviality of $TS$]\labell{t:TSloctriv}
Let $S$ be a subcartesian space.  There exists an open dense subset $U\subseteq S$ such that $\tau|_U:TS|_U\to U$ is locally trivial.
\end{theorem}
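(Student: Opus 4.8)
The plan is to study the integer-valued function $d\colon S\to\ZZ$, $d(x):=\dim T_xS$, to show that its ``jump locus'' is nowhere dense, and to produce a product structure over the complementary open dense set by realizing each $T_xS$ as the kernel of a local submersion of constant rank. First I would fix a chart $\varphi\colon V\to\tilde V\subseteq\RR^n$ about a point $x$. By \hypref{p:charTS}{Proposition}, $\varphi_*$ identifies $T_xS$ with the space of $\tilde v\in T_{\varphi(x)}\RR^n$ annihilating every differential $dg|_{\varphi(x)}$ with $g\in\mathfrak{n}(\tilde V)$. Writing $D_y:=\Span\{dg|_y\mid g\in\mathfrak{n}(\tilde V)\}\subseteq T_y^*\RR^n$ for $y\in\tilde V$, this gives $d(x)=n-\dim D_{\varphi(x)}$; in particular $d$ is finite, and since the dimension of a span of differentials is lower semicontinuous, $d$ is \emph{upper} semicontinuous on $S$.

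Next I would invoke the elementary fact that an integer-valued upper semicontinuous function on any topological space is locally constant on an open dense set. Indeed, given a nonempty open $W$, the infimum $m$ of $d$ over $W$ is attained (the values are nonnegative integers), and upper semicontinuity makes $\{d\le m\}$ open; hence $d\equiv m$ on an open neighbourhood inside $W$ of any minimizing point. The set $U$ of points at which $d$ is locally constant is therefore open and dense, and it is the candidate for the set in the statement.

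It then remains to trivialize $\tau$ over $U$. Fix $x_0\in U$ with $d\equiv k$ near $x_0$ and a chart $\varphi\colon V\to\tilde V\subseteq\RR^n$ with $V\subseteq U$. Choose $g_1,\dots,g_{n-k}\in\mathfrak{n}(\tilde V)$ whose differentials are independent at $\varphi(x_0)$ and set $g=(g_1,\dots,g_{n-k})\colon\RR^n\to\RR^{n-k}$. Since $g$ has rank $n-k$ at $\varphi(x_0)$ and rank at most $n-k$ everywhere, lower semicontinuity forces $g$ to be a submersion on an open neighbourhood $W\subseteq\RR^n$ of $\varphi(x_0)$; there $\ker Dg$ is a smooth rank-$k$ vector subbundle of $T\RR^n|_W$, and a smooth local frame (with smooth dual frame) yields a trivialization $\ker Dg|_W\cong W\times\RR^k$. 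Because $d\equiv k$ on $V\subseteq U$, we have $\dim D_y=n-k$ for every $y\in\tilde V\cap W$, so the $dg_i|_y$ span $D_y$ and $\varphi_*(T_xS)=\ker Dg|_{\varphi(x)}$ for $x\in V':=\varphi^{-1}(W)\cap V$. Restricting the frame trivialization to the differential subspace over $\tilde V\cap W$ and transporting it back through $\varphi_*$ produces a diffeomorphism $\psi\colon TS|_{V'}\to V'\times\RR^k$ commuting with the projections, exhibiting local triviality with fibre $\RR^k$.

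The main obstacle is the identification $\varphi_*(T_xS)=\ker Dg|_{\varphi(x)}$: I must guarantee that finitely many elements of the vanishing ideal $\mathfrak{n}(\tilde V)$ recover \emph{all} of $D_y$ rather than a proper subspace, and this is exactly what local constancy of $d$ on $U$ supplies. A secondary care point is that $\tilde V$ is generally not open in $\RR^n$, so the subbundle $\ker Dg$ and its frame must first be built over the genuinely open set $W$ and only afterwards restricted to the differential subspace $\tilde V\cap W$; verifying that this restriction is a diffeomorphism of subcartesian spaces is routine given smoothness of the frame and its dual. Finally, to match the single global fibre demanded by the definition of local triviality, I would take $U$ to lie within one level of $d$ (for a connected $S$ with a dense top stratum, such as $Z/G$, the level containing that stratum is itself open and dense), so that the constant value $k$, and hence the fibre $\RR^k$, is uniform over $U$.
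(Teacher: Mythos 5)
Your argument is correct in its essentials. Since the paper proves \hypref{t:TSloctriv}{Theorem} only by citation to \cite{lsw10}, there is no in-text proof to compare against; the natural comparison is with the paper's own proof of the sibling statement for the orbital tangent bundle, \hypref{p:checkTSloctriv}{Proposition}, of which your argument is the exact mirror image. There, $\delta_{\mathcal{F}}(x)=\dim\check{T}^{\mathcal{F}}_xS$ is \emph{lower} semicontinuous (\hypref{l:lwrsemicont}{Lemma}), so the paper finds, arbitrarily close to any point, a point at which the local supremum of $\delta_{\mathcal{F}}$ is attained and a spanning set of vector-field values stays spanning nearby; you observe via \hypref{p:charTS}{Proposition} that $\dim T_xS=n-\dim D_{\varphi(x)}$ is \emph{upper} semicontinuous and find, in every nonempty open set, a point at which the infimum is attained. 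Both yield local constancy of the fibre dimension on an open dense set, and the two trivialisations are dual to one another: a frame of vector-field values in the orbital case, versus the kernel of the submersion $g=(g_1,\dots,g_{n-k})$ built from the vanishing ideal in the Zariski case. Your identification $\varphi_*(T_xS)=\ker Dg|_{\varphi(x)}$ --- hinging on local constancy of $d$ to guarantee that finitely many $dg_i$ exhaust $D_y$ --- is exactly the right pivot, and restricting the frame trivialisation to the differential subspace over $\tilde{V}\cap W$ is legitimate because $\varphi_*$ is a chart on $TS$.

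One caveat concerns your last paragraph. Read literally, the paper's definition of local triviality fixes a single fibre $F$ over all of $U$, and your proposed remedy --- shrinking $U$ to a single level of $d$ --- cannot work for a general subcartesian space: for $S=(\RR\times\{0\})\cup([1,2]\times[0,1])\subseteq\RR^2$ one has $d\equiv1$ on $((-\infty,1)\cup(2,\infty))\times\{0\}$ and $d\equiv2$ on the square (including its bottom edge), so no single level of $d$ meets every nonempty open subset of $S$, and with the literal definition the theorem is simply false for this $S$. No proof could close that gap, and the defect lies in the statement rather than in your argument: the paper's own proof of \hypref{p:checkTSloctriv}{Proposition} quantifies the fibre locally (``$\check{\tau}_{\mathcal{F}}^{-1}(V)\cong V\times F$ for some vector space $F$''), and under that reading your open dense locus of local constancy of $d$ already suffices, making the final paragraph unnecessary.
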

\begin{proof}
See \cite{LSW}.
\end{proof}

\begin{corollary}
The $k$th exterior product of fibres of $TS$ over $S$ are also locally trivial over an open dense subset of $S$.
\end{corollary} 
\section{Vector Fields on Subcartesian Spaces}\labell{s:vectorfields}

In this section we review the theory of vector fields on subcartesian spaces, developed by \'Sniatycki in \cite{sniatycki}.

\begin{definition}[Integral Curves]
Fix $X\in\Der\CIN(S)$ and $x\in S$.  A \emph{maximal integral curve} $\exp(\cdot X)(x)$ of $X$ through $x$ is a smooth map from a connected subset $I^X_x\subseteq\RR$ containing 0 to $S$ such that $\exp(0X)(x)=x$, the following diagram commutes,
$$\xymatrix{
TI^X_x \ar[rr]_{\exp(\cdot X)(x)_*} && TS\\
I^X_x \ar[u]^{\frac{d}{dt}} \ar[rr]^{\exp(\cdot X)(x)} && S \ar[u]_{X}
}$$
and such that $I_x^X$ is maximal among the domains of all such curves.  In particular, for all $f\in\CIN(S)$ and $t\in I^X_x$, $$\frac{d}{dt}(f\circ\exp(tX)(x))=(Xf)(\exp(tX)(x)).$$ We adopt the convention that the map $c:\{0\}\to S:0\mapsto c(0)$ is an integral curve of every global derivation of $\CIN(S)$.
\end{definition}

\begin{theorem}[ODE Theorem for Subcartesian Spaces -- \'Sniatycki]\labell{t:ode}
Let $S$ be a subcartesian space, and let $X\in\Der\CIN(S)$.  Then, for any $x\in S$, there exists a unique maximal integral curve $\exp(\cdot X)(x)$ through $x$.
\end{theorem}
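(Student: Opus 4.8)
The plan is to reduce everything to the classical existence-and-uniqueness theorem for ordinary differential equations in $\RR^n$ by working in a chart and using a local extension of $X$. Fix $x\in S$ and a chart $\varphi:U\to\tilde U\subseteq\RR^n$ about $x$, chosen small enough that $\tilde U$ is closed in some open ball $B\subseteq\RR^n$. By \hypref{p:charDer}{Proposition} I may shrink $U$ and produce a local extension $\tilde X\in\Der\CIN(\RR^n)$ with $\varphi_*(X|_U)=\tilde X|_{\tilde U}$ and, crucially, $\tilde X(\mathfrak n(\tilde U))\subseteq\mathfrak n(\tilde U)$. Since $\tilde X$ is an honest smooth vector field on $\RR^n$, the classical theorem yields a local flow $\tilde\psi_t$ and a smooth integral curve $\tilde\gamma(t)=\tilde\psi_t(\varphi(x))$ defined on an open interval about $0$.

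Next I would show that the relevant portion of $\tilde\gamma$ stays inside $\tilde U$, so that it descends to $S$. Let $I_0$ be the connected component containing $0$ of the set $\{t:\tilde\gamma(t)\in\tilde U\}$. Because $\tilde U$ is closed in $B$, this set is relatively closed in $\tilde\gamma^{-1}(B)$, so $I_0$ is an interval (possibly half-open or degenerate). The content is that $I_0$ is nondegenerate exactly when it should be: the tangency condition $\tilde X(\mathfrak n(\tilde U))\subseteq\mathfrak n(\tilde U)$ forces the flow of $\tilde X$ to leave $\tilde U$ invariant in the appropriate (possibly one-sided) sense, so that from $\varphi(x)\in\tilde U$ the curve cannot instantaneously escape $\tilde U$. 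Granting this, I set $\gamma:=\varphi^{-1}\circ\tilde\gamma|_{I_0}$ and verify it is an integral curve of $X$: for $f\in\CIN(S)$, writing $f|_U=\varphi^*\tilde f$ locally, the chain rule gives $\tfrac{d}{dt}f(\gamma(t))=(\tilde X\tilde f)(\tilde\gamma(t))=(Xf)(\gamma(t))$ on $I_0$, using $\varphi_*(X|_U)=\tilde X|_{\tilde U}$.

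For uniqueness I would argue that any integral curve $\sigma$ of $X$ through $x$ pushes forward, near $x$, to an integral curve of $\tilde X$ in $\RR^n$: testing the defining identity against the coordinate functions $\tilde f=x^i$ and using $\varphi_*(X|_U)=\tilde X|_{\tilde U}$ shows $\varphi\circ\sigma$ solves the classical ODE for $\tilde X$, whence classical uniqueness forces $\varphi\circ\sigma=\tilde\gamma$ wherever both are defined. Thus the local integral curve is unique. Finally, covering $S$ by charts and invoking this local uniqueness on overlaps, the locally defined integral curves agree where they meet and can be glued; a standard maximal-interval (Zorn) argument then produces a unique maximal integral curve $\exp(\cdot X)(x)$ with connected domain $I^X_x\ni0$, the degenerate case $I^X_x=\{0\}$ being covered by the stated convention.

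The main obstacle is the invariance step: showing that the classical integral curve of the extension $\tilde X$ genuinely remains in the chart image $\tilde U$, using only the \emph{algebraic} tangency $\tilde X(\mathfrak n(\tilde U))\subseteq\mathfrak n(\tilde U)$. The difficulty is twofold. First, $\tilde U$ need not be invariant under the flow in both time directions---as already the half-line $[0,\infty)\subseteq\RR$ with $\tilde X=\partial_x$ shows---which is exactly why the theorem must allow $I^X_x$ to be a one-sided or degenerate interval rather than an open one. Second, a naive Taylor-series argument fails: although all $t$-derivatives of $g\circ\tilde\gamma$ vanish at $0$ for every $g\in\mathfrak n(\tilde U)$ (since $\tilde X^k g\in\mathfrak n(\tilde U)$), flat functions obstruct concluding $g\circ\tilde\gamma\equiv0$ directly. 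One must instead appeal to a genuine invariance theorem for closed sets under vector fields tangent in the ideal (equivalently, tangent-cone) sense, which is the real analytic heart of \'Sniatycki's result.
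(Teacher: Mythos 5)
The paper does not actually prove this theorem---it only cites \'Sniatycki---and your reduction to the classical existence--uniqueness theorem via a chart and a local extension $\tilde X$ of $X$ is precisely the route taken in that reference. The uniqueness half of your argument is sound: testing the integral-curve identity against bump-function extensions of the coordinate functions $\varphi^*x^i$ and using locality of derivations shows that $\varphi\circ\sigma$ solves the classical ODE for $\tilde X$, so any two integral curves through $x$ agree near $0$ and hence, by a connectedness argument, on the whole overlap of their domains; the union of all integral curves through $x$ is then the unique maximal one (no Zorn needed, since the domains are intervals containing $0$ and the curves pairwise agree).

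However, the ``main obstacle'' you single out---deducing flow-invariance of $\tilde U$ from the algebraic tangency $\tilde X(\mathfrak{n}(\tilde U))\subseteq\mathfrak{n}(\tilde U)$---is not needed for this statement, and identifying it as the analytic heart of the theorem is a misdiagnosis. You defined $I_0$ to be the connected component of $0$ in $\tilde\gamma^{-1}(\tilde U)$; on that set, whatever interval it turns out to be, $\gamma=\varphi^{-1}\circ\tilde\gamma|_{I_0}$ takes values in $S$ \emph{by construction}, and the chain rule applied to a local representative $\tilde f$ of $f$ gives $\frac{d}{dt}(f\circ\gamma)=(Xf)\circ\gamma$ on all of $I_0$. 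If $I_0$ degenerates to $\{0\}$, the convention in the definition covers you; and maximality comes from the gluing step over all integral curves, not from producing a long curve in a single chart, so no lower bound on the length of $I_0$ is ever required. The theorem deliberately tolerates closed and half-closed domains: the paper's own example of $\partial_x$ on $[0,\infty)$, whose maximal integral curve through $0$ has domain $[0,\infty)$, illustrates this, and your $I_0$ construction already returns the correct answer there with no invariance theorem. The genuinely delicate invariance questions you gesture at belong to the later results---\hypref{p:opendomain}{Proposition} and the definition of a vector field, where one asks when the domains are \emph{open}---not to the ODE theorem itself.
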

\begin{proof}
See \cite{sniatycki0} and \S4 Theorem 1 of \cite{sniatycki}.
\end{proof}

\begin{proposition}[Local Representatives of Integral Curves]\labell{p:intcurve}
Let $\varphi:U\to\tilde{U}\subseteq\RR^n$ be a chart on $S$, $X\in\Der\CIN(S)$ and $\tilde{X}\in\Der\CIN(\RR^n)$ such that $$\varphi_*(X|_U)=\tilde{X}|_{\tilde{U}}.$$  Then for all $x\in S$ and $t\in I^{X|_U}_x$,  $$\varphi(\exp(tX)(x))=\exp(t\tilde{X})(\varphi(x)).$$
\end{proposition}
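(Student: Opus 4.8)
The plan is to push the maximal integral curve $\exp(\cdot X)(x)$ forward through the chart $\varphi$, recognise the resulting curve in $\RR^n$ as an integral curve of $\tilde{X}$, and then invoke the uniqueness half of the \hypref{t:ode}{ODE Theorem}. Since $\varphi(x)$ only makes sense when $x\in U$, I assume $x\in U$, and I first restrict attention to the connected component $J\subseteq I^X_x$ of $\{t\in I^X_x\mid\exp(tX)(x)\in U\}$ containing $0$. This is the natural domain on which the asserted identity holds: once the trajectory leaves $U$, the extension $\tilde{X}$ is unconstrained off $\tilde{U}$, so the two curves may diverge thereafter, and the statement should be read as holding while the trajectory remains in $U$. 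On $J$ set $c(t):=\varphi(\exp(tX)(x))$, a smooth curve into $\tilde{U}\subseteq\RR^n$ with $c(0)=\varphi(x)$.

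The core step is to verify that $c$ satisfies the integral-curve equation for $\tilde{X}$. Fix $\tilde{f}\in\CIN(\RR^n)$ and let $f:=\varphi^*(\tilde{f}|_{\tilde{U}})\in\CIN(U)$. Because $\CIN$ is a sheaf and both sides of the integral-curve condition are local in $t$ and in the point (derivations being local operators), the restriction $\exp(\cdot X)(x)|_J$ is an integral curve of the restricted derivation $X|_U$, so for $t\in J$,
\begin{align*}
\frac{d}{dt}\big(\tilde{f}\circ c\big)(t)
 =\frac{d}{dt}\big(f\circ\exp(\cdot X)(x)\big)(t)
 =(Xf)\big(\exp(tX)(x)\big).
\end{align*}
Now I use the hypothesis $\varphi_*(X|_U)=\tilde{X}|_{\tilde{U}}$ pointwise: for $y\in U$, the defining property $(\varphi_*X_y)(\tilde{f}|_{\tilde{U}})=X_y(\varphi^*\tilde{f})=(Xf)(y)$ of the pushforward, together with $\varphi_*X_y=\tilde{X}_{\varphi(y)}$ and $\tilde{X}_{\varphi(y)}(\tilde{f}|_{\tilde{U}})=(\tilde{X}\tilde{f})(\varphi(y))$, yields $(Xf)(y)=(\tilde{X}\tilde{f})(\varphi(y))$. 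Substituting $y=\exp(tX)(x)$ gives
\begin{align*}
\frac{d}{dt}\big(\tilde{f}\circ c\big)(t)=(\tilde{X}\tilde{f})\big(c(t)\big),
\end{align*}
which is exactly the condition for $c$ to be an integral curve of $\tilde{X}$ through $\varphi(x)$.

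To finish, I apply uniqueness of maximal integral curves from the \hypref{t:ode}{ODE Theorem}, applied to the subcartesian space $\RR^n$ (where it reduces to the classical statement): both $c$ and $\exp(\cdot\tilde{X})(\varphi(x))$ are integral curves of $\tilde{X}$ starting at $\varphi(x)$, so they agree on $J$, giving $\varphi(\exp(tX)(x))=\exp(t\tilde{X})(\varphi(x))$ for all $t\in J$.

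I expect the main difficulty to be bookkeeping rather than anything deep. Two points require care. First, one must justify passing from the integral-curve condition, stated for functions in $\CIN(S)$, to the function $f=\varphi^*\tilde{f}$, which is only defined on $U$; this is the sheaf/locality argument above. Second, one must correctly translate the hypothesis $\varphi_*(X|_U)=\tilde{X}|_{\tilde{U}}$, an identity of sections of the Zariski tangent bundle, into the pointwise derivation identity $(Xf)(y)=(\tilde{X}\tilde{f})(\varphi(y))$ via the defining relation $(F_*v)f=v(F^*f)$. The one genuinely conceptual point is the restriction to the connected interval $J$, which is precisely what allows uniqueness for $\tilde{X}$ in $\RR^n$ to be transported back through the diffeomorphism $\varphi$.
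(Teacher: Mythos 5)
Your proposal is correct and follows essentially the same route as the paper: push the integral curve forward through $\varphi$, check it satisfies the integral-curve equation for $\tilde{X}$, and conclude by uniqueness from the ODE theorem. You are in fact somewhat more careful than the paper's own proof, which only computes the derivative at $t=0$ and takes $J$ to be the full (possibly disconnected) preimage of $U$ in $I^X_x$; your restriction to the connected component containing $0$ correctly identifies the domain on which the identity is actually guaranteed.
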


\begin{proof}
Denote by $J$ the open subset of $I^X_x$ such that for every $t\in J$, $\exp(tX)(x)\in U$. Define $\gamma:J\to\tilde{U}:t\mapsto\varphi(\exp(tX)(x))$.  Then,
\begin{align*}
\frac{d}{dt}\Big|_{t=0}(\gamma(t))=&~\varphi_*(X|_x)\\
=&~\tilde{X}|_{\varphi(x)}.
\end{align*}
Applying the ODE theorem, $\gamma(t)=\exp(t\tilde{X})(\varphi(x))$.
\end{proof}

Fix a derivation $X\in\Der\CIN(S)$.  Let $A^X\subseteq\RR\times S$ be defined as $$A^X:=\coprod_{x\in S}I_x^X.$$  Then there is an induced smooth map $A^X\to S$ whose restriction to each fibre $A^X\cap(\RR\times\{x\})$ is the domain $I^X_x$ of the maximal integral curve $\exp(\cdot X)(x)$.  

\begin{definition}[Local Flows]
Let $D$ be a subset of $\RR\times S$ containing $\{0\}\times S$ such that $D\cap(\RR\times\{x\})$ is connected for each $x\in S$.  A map $\phi:D\to S$ is a \emph{local flow} if $D$ is open, $\phi(0,x)=x$ for each $x\in S$, and $\phi(t,\phi(s,x))=\phi(t+s,x)$ for all $x\in S$ and $s,t\in\RR$ for which both sides are defined.
\end{definition}

\begin{remark}
If $S$ is a smooth manifold, then every derivation $X$ admits a local flow $\exp(\cdot X)(\cdot)$ sending $(t,x)$ to $\exp(tX)(x)$.  This is not the case with subcartesian spaces.  Indeed, consider the closed ray $[0,\infty)$, and the global derivation $X=\partial_x$.  Then the domain $D$ of $\exp(\cdot X)(\cdot)$ is not an open subset of $\RR\times[0,\infty)$.  Indeed, $D\cap(\RR\times\{x\})=[-x,\infty)\times\{x\}$ for each $x\in\RR$.  Thus, $D=\{(t,x)\in\RR^2~|~t\geq-x,~x\geq0\}$.  This motivates the following definition.
\end{remark}

\begin{definition}[Vector Fields]
A \emph{vector field} on $S$ is a derivation $X$ of $\CIN(S)$ such that $A^X$ is open in $\RR\times S$.  Equivalently, the map $(t,x)\mapsto\exp(tX)(x)$ defined on $A^X$ is a local flow.  Here let us emphasise that $\exp(tX)(x)$ is the maximal integral curve through $x$.  Denote the set of all vector fields on $S$ by $\Vect(S)$.
\end{definition}

\begin{remark}
Given a vector field $X$ on $S$, since $A^X$ is open, the domain of each of its maximal integral curves is open.  Note, however, that the converse is not true: if $X$ is a global derivation and each of its maximal integral curves has an open domain, it is not necessarily true that $X$ is a vector field. For a counterexample, see \hypref{x:loccmpctneeded}{Example}.
\end{remark}

For the important proposition to come, we recall the concepts of ``locally closed'' and ``locally compact''.  In the literature (for example, \cite{sniatycki}), the notion of locally closed is used for subsets of $\RR^n$ (in particular, for differential subspaces of $\RR^n$).  ``Locally compact'', however, can be used for subcartesian spaces (or any topological space), not just differential subspaces of $\RR^n$.  It also tends to be more widely used in the literature.  We show in the following lemma that, for differential subspaces of $\RR^n$, these two concepts coincide. Before stating and proving the lemma, we recall the definitions of locally compact and locally closed subsets.

\begin{itemize}
\item Let $S\subseteq\RR^n$.  $S$ is \emph{locally compact} if for every $x\in S$ there exist a relatively open neighbourhood $U\subseteq S$ of $x$ and a compact set $K\subseteq S$ such that $U\subseteq K$.

\item Let $S\subseteq\RR^n$.  $S$ is \emph{locally closed} if for every $x\in S$ there exist an open neighbourhood $V\subseteq\RR^n$ of $x$ and a closed set $C\subseteq\RR^n$ such that $V\cap C$ is a relatively open neighbourhood of $x$ in $S$.
\end{itemize}

\begin{lemma}
Let $S\subseteq\RR^n$.  Then $S$ is locally closed if and only if $S$ is locally compact.
\end{lemma}

\begin{proof}
Assume that $S$ is locally compact, and fix $x\in S$.  Then, there exist an open neighbourhood $U\subseteq S$ of $x$ and a compact $K\subseteq S$ such that $U\subseteq K$.  There exists an open neighbourhood $V\subseteq\RR^n$ of $x$ such that $U=V\cap S$, and $K$ is a compact subset of $\RR^n$ and hence closed.  $$V\cap K\subseteq V\cap S=U\subseteq V\cap K$$
and so $V\cap K=U$.  Hence, $S$ is locally closed.\\

Conversely, assume $S$ is locally closed, and fix $x\in S$.  There exist an open neighbourhood $V\subseteq\RR^n$ of $x$ and a closed subset $C\subseteq\RR^n$ such that $V\cap C$ is an open neighbourhood of $x$ contained in $S$.  Let $B\subseteq\RR^n$ be the open ball of radius $\epsilon>0$ centred at $x$.  Then, $B\cap V\cap C$ is an open neighbourhood of $x$ in $S$.  Choosing $\epsilon$ to be sufficiently small so that $\overline{B}\subseteq V$, we have $B\cap V\cap C=B\cap C$ and $\overline{B}\cap C\subseteq S$.  Since $\overline{B}$ and $C$ are closed subsets of $\RR^n$, their intersection is closed.  Since this intersection is contained in $S$, $\overline{B}\cap C$ is a closed subset of $S$.  Moreover, since $\overline{B}$ is compact in $\RR^n$, $\overline{B}\cap C$ is compact in $\RR^n$ as well.\\

Now, let $\{W_\alpha\}_{\alpha\in A}$ be an open cover of $\overline{B}\cap C$ in $S$. Then, for each $\alpha$, there exists an open set $\tilde{W}_\alpha$ such that $W_\alpha=\tilde{W}_\alpha\cap S$. Thus, the collection of open sets $\{\tilde{W}_\alpha\}_{\alpha\in A}$ forms an open cover of $\overline{B}\cap C$ in $\RR^n$.  Since $\overline{B}\cap C$ is compact in $\RR^n$, there is a finite subcover $\{\tilde{W}_{\alpha_i}\}$ of $\{\tilde{W}_\alpha\}$ that covers $\overline{B}\cap C$.  But then for each $\alpha_i$, $W_{\alpha_i}:=\tilde{W}_{\alpha_i}\cap S$ is an open subset of $S$, contained in $\{W_\alpha\}$.  We conclude that the collection $\{W_{\alpha_i}\}$ is a finite subcover of $\{W_\alpha\}$ covering $\overline{B}\cap C$, and hence $\overline{B}\cap C$ is compact as a subset of $S$.
\end{proof}

Note that a subcartesian space can be locally compact, which extends the notion of local closedness beyond differential subspaces of $\RR^n$.

\begin{proposition}[Integral Curve Domains -- \'Sniatycki]\labell{p:opendomain}
Let $S$ be a locally compact subcartesian space.  A derivation $X$ of $\CIN(S)$ is a vector field if and only if the domain of each of its maximal integral curves is open.
\end{proposition}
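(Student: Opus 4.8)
The statement is an equivalence, and I would treat the two implications separately. The forward implication is essentially definitional: if $X$ is a vector field then by definition $\exp(\cdot X)(\cdot)$ is a local flow, whose domain $A^X$ is open in $\RR\times S$ (this is the remark preceding the statement), and intersecting $A^X$ with $\RR\times\{x\}$ exhibits each maximal integral curve as having open domain $I^X_x$. So all the content lies in the converse, and the plan is to reduce it to a single substantive assertion. Assuming every $I^X_x$ is open, to conclude that $X$ is a vector field I must check that $\exp(\cdot X)(\cdot)$ is a local flow. Connectedness of the fibres $A^X\cap(\RR\times\{x\})=I^X_x$ and the normalisation $\exp(0X)(x)=x$ are immediate from the definition of a maximal integral curve, and the semigroup identity follows from the uniqueness clause of \hypref{t:ode}{Theorem}: for fixed $s\in I^X_x$ the curve $t\mapsto\exp((t+s)X)(x)$ is an integral curve passing through $\exp(sX)(x)$ at $t=0$, hence agrees with $\exp(\cdot X)(\exp(sX)(x))$ wherever both are defined. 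Thus everything reduces to showing that $A^X$ is open in $\RR\times S$.

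I would prove openness of $A^X$ in a local stage followed by a global stage. For the local stage, fix $x_0$ and use \hypref{p:charDer}{Proposition} to choose a chart $\varphi:U\to\tilde U\subseteq\RR^n$ about $x_0$, small enough to carry a local extension $\tilde X\in\Der\CIN(\RR^n)$ with $\varphi_*(X|_U)=\tilde X|_{\tilde U}$; note $\tilde X$ is an ordinary smooth vector field on $\RR^n$ and, by the characterisation $\tilde X(\mathfrak{n}(\tilde U))\subseteq\mathfrak{n}(\tilde U)$, is tangent to $\tilde U$. By \hypref{p:intcurve}{Proposition} integral curves of $X$ that remain in $U$ are carried by $\varphi$ to integral curves of $\tilde X$; conversely, using that $\tilde X$ is the local extension, one checks that whenever $\exp(t\tilde X)(\varphi(y))$ stays in $\tilde U$ the curve $\varphi^{-1}(\exp(t\tilde X)(\varphi(y)))$ is an integral curve of $X$, hence equals $\exp(tX)(y)$ by uniqueness. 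Since $\tilde X$ is an ordinary vector field, its flow has open domain and depends continuously on time and initial point; combining this continuous dependence with the hypothesis that $I^X_{x_0}$ is open and with local compactness of $S$ (which makes $\tilde U$ locally closed and lets me keep nearby integral curves inside a single chart), I would produce a neighbourhood $W$ of $x_0$ and $\epsilon>0$ with $(-\epsilon,\epsilon)\times W\subseteq A^X$ on which $\exp(\cdot X)(\cdot)$ is continuous. In particular $A^X$ contains an open neighbourhood of $\{0\}\times S$.

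For the global stage, take $(t_0,x_0)\in A^X$ with, say, $t_0>0$. The segment $K:=\{\exp(tX)(x_0)\mid t\in[0,t_0]\}$ is compact, so finitely many of the neighbourhoods from the local stage cover it, yielding a single $\epsilon>0$ and a neighbourhood $\Omega$ of $K$ on which the flow exists for times in $(-\epsilon,\epsilon)$ and is continuous. Choosing $0=s_0<\cdots<s_N=t_0$ with all gaps $<\epsilon$, I would propagate a neighbourhood of $x_0$ forward one time-step at a time: if $y$ is close enough to $x_0$ then $\exp(s_1X)(y)$ is defined and close to $\exp(s_1X)(x_0)\in K$, hence in $\Omega$, and the semigroup identity lets me apply the next step, continuing through step $N$. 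This shows $\exp(tX)(y)$ is defined and continuous for $t$ near $t_0$ and $y$ near $x_0$, so $(t_0,x_0)$ is interior to $A^X$; the case $t_0<0$ is symmetric and $t_0=0$ was handled locally, giving openness of $A^X$.

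The main obstacle is the local stage, precisely the point at which local compactness must be used: passing from fibrewise openness of the $I^X_y$ to a genuine two-dimensional neighbourhood in $A^X$, while controlling nearby integral curves so that they remain inside $U$ long enough for the correspondence with the extension $\tilde X$ to apply. That openness of the fibres is indispensable is visible from the half-line $S=[0,\infty)$ with $X=\partial_x$: here $S$ is locally compact but the maximal domains $I^X_x=[-x,\infty)$ fail to be open and $X$ is not a vector field, so the converse argument cannot succeed by local compactness alone and must genuinely exploit openness of the fibres to manufacture the uniform existence time $\epsilon$ and the joint continuity recorded above.
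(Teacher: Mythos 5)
The paper offers no argument here at all---it simply cites \S4 Proposition 3 of Śniatycki's paper---so there is no in-text proof to compare against; your reduction (forward direction is definitional, semigroup identity from uniqueness in \hypref{t:ode}{Theorem}, everything rests on openness of $A^X$), your two-stage plan, and your chaining argument in the global stage are all sound and are surely the architecture of the cited proof. The problem is that your ``local stage'' --- producing $\epsilon>0$ and a neighbourhood $W$ of $x_0$ with $(-\epsilon,\epsilon)\times W\subseteq A^X$ --- is exactly where the entire content of the proposition lives, and you assert it (``I would produce\dots'') rather than prove it. Two specific things are missing. First, you invoke ``the hypothesis that $I^X_{x_0}$ is open,'' but openness at $x_0$ alone is useless for the local stage: what you must use is openness of $I^X_y$ for the \emph{nearby} points $y$, since the danger is that the curve through $y$ terminates inside the chart after an arbitrarily short time (as happens on $[0,\infty)$ with $\partial_x$, where $I^X_y=[-y,\infty)$). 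Second, the phrase ``tangent to $\tilde U$'' is misleading: $\tilde X(\mathfrak{n}(\tilde U))\subseteq\mathfrak{n}(\tilde U)$ does \emph{not} imply the flow $\Psi$ of $\tilde X$ preserves $\tilde U$ (again $\partial_x$ on $[0,\infty)$), so you cannot get the uniform $\epsilon$ merely from continuous dependence of $\Psi$; you need an argument for why the $X$-curve through $y$ survives until it leaves a fixed compact set.

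Here is the missing mechanism. Choose a compact neighbourhood $V\subseteq U$ of $x_0$ (local compactness), so $K:=\varphi(V)$ is closed in $\RR^n$, and shrink so that $\Psi$ is defined on $[-\epsilon_0,\epsilon_0]\times K$. For $y\in V$ let $e(y)$ be the supremum of $t\in[0,\epsilon_0]$ such that $[0,t]\subseteq I^X_y$ with the curve staying in $V$. Because $K$ is closed and $\Psi(s,\varphi(y))=\varphi(\exp(sX)(y))\in K$ for $s<e(y)$, the limit $\Psi(e(y),\varphi(y))$ lies in $K\subseteq\tilde U$, so $\varphi^{-1}\circ\Psi(\cdot,\varphi(y))$ is an integral curve on the \emph{closed} interval $[0,e(y)]$ and hence $e(y)\in I^X_y$; this step needs no hypothesis. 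Now openness of $I^X_y$ gives $[0,e(y)+\delta)\subseteq I^X_y$, and maximality of $e(y)$ then forces $\exp(e(y)X)(y)$ to lie in the boundary of $V$ in $S$: the curve cannot die inside $V$, it must exit. If there were $y_k\to x_0$ with $e(y_k)\to 0$, then $\exp(e(y_k)X)(y_k)=\varphi^{-1}(\Psi(e(y_k),\varphi(y_k)))\to x_0$ would put $x_0$ on the boundary of $V$, contradicting that $V$ is a neighbourhood. This yields the uniform $\epsilon$, and on $(-\epsilon,\epsilon)\times W$ the flow equals $\varphi^{-1}\circ\Psi\circ(\id\times\varphi)$, giving the joint continuity your global stage consumes. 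Without this first-exit-time argument the proof is incomplete at its only substantive point.
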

\begin{proof}
See \S4 Proposition 3 of \cite{sniatycki}.
\end{proof}

\begin{example}[\'Sniatycki \cite{sniatycki}]\labell{x:loccmpctneeded}
Let $S$ be the differential subspace of $\RR^2$ given by $$S=\{(x,y)~|~x^2+(y-1)^2<1\}\cup\{(x,y)~|~y=0\}.$$  Consider the global derivation given by the restriction of $\partial_x$ to $S$.  Then the domain of each maximal integral curve of $\partial_x$ is open; however, at the origin, the integral curve does not induce a local diffeomorphism.
\end{example} 
\section{Locally Complete Families of Vector Fields and Smooth Stratified Spaces}\labell{s:locallycomplete}

We are interested in using families of vector fields in order to obtain a ``nice'' partition of a subcartesian space.  The condition needed to achieve this on these families is defined next.  We then give examples.

\begin{definition}[Locally Complete Families of Vector Fields]
A family of vector fields $\mathcal{F}\subseteq\Vect(S)$ is \emph{locally complete} if for every $X,Y\in\mathcal{F}$, every $x\in S$ and every $t\in\RR$ such that $(\exp(tX)_*Y)|_x$ is well-defined, there exist an open neighbourhood $U$ of $x$ and a vector field $Z\in\mathcal{F}$ such that $\exp(tX)_*Y|_U=Z|_U$.
\end{definition}

\begin{remark}
Note that for $f\in\CIN(S)$ and $X,Y\in\mathcal{F}$ where $\mathcal{F}$ is a locally complete family of vector fields, $x\in S$ and $s,t\in\RR$, we have (where it is defined)
\begin{align*}
\frac{d}{ds}f(\exp(tX)(\exp(sY)(x)))=&~(\exp(tX)_*(Y|_{\exp(sY)(x)}))f\\
=&~((\exp(tX)_*Y)f)(\exp(tX)(\exp(sY)(x))).
\end{align*}
For fixed $t$, there exists an open neighbourhood $U$ of $x$ on which the local flow of $(\exp(tX)_*Y)|_U$ is equal to $s\mapsto\exp(tX)(\exp(sY)(y))$ for $y\in U$.
\end{remark}

\begin{example}[Not Locally Complete]\labell{x:lcexample}
Consider $S=\RR^2$, and let $\mathcal{F}$ be the family of all $\RR$-linear combinations of the two vector fields $\partial_x$ and $x\partial_y$.  This family is not locally complete, as one can check that $\exp(tx\partial_y)_*\partial_x=\partial_x+t\partial_y$ is not contained in $\mathcal{F}$ for any $t\neq 0$.
\end{example}

\begin{proposition}[\'Sniatycki]\labell{p:vectloccompl}
$\Vect(S)$ is locally complete.
\end{proposition}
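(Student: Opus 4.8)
The plan is to take $Z$ to be the pushforward $\exp(tX)_{*}Y$ itself, cut off to a globally defined vector field, and to verify the requirements of local completeness one at a time. First I would set up a local diffeomorphism out of the flow of $X$. Write $\phi_{s}:=\exp(sX)$ for the local flow of the vector field $X$; since $X$ is a vector field its flow domain $A^{X}$ is open. The hypothesis that $(\exp(tX)_{*}Y)|_{x}$ is well-defined means that $x=\phi_{t}(y)$ for $y:=\phi_{-t}(x)$, and the group law $\phi_{t}\circ\phi_{-t}=\id$, $\phi_{-t}\circ\phi_{t}=\id$ on the relevant open sets shows that $\phi_{t}$ restricts to a diffeomorphism from an open neighbourhood $V$ of $y$ onto an open neighbourhood $W$ of $x$, with inverse $\phi_{-t}$.

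Next I would check that $(\phi_{t})_{*}(Y|_{V})$ is a genuine vector field on the open subcartesian space $W$, so that it is a legitimate candidate to match locally. Because $\phi_{t}$ is a diffeomorphism, a direct computation from the definition of the pushforward shows that the conjugated family $s\mapsto\phi_{t}\circ\exp(sY)\circ\phi_{-t}$ is the local flow of $(\phi_{t})_{*}(Y|_{V})$; being a composition of smooth maps with open domain it is a local flow, so $(\phi_{t})_{*}(Y|_{V})\in\Vect(W)$, and on the subset of $W$ where it is defined it coincides with $\exp(tX)_{*}Y$.

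The remaining step, which is also the main obstacle, is to extend this locally defined vector field to a global $Z\in\Vect(S)$ agreeing with it near $x$; on a manifold one multiplies by a bump function and is immediately done, but in the subcartesian setting one must re-verify both that the cut-off object is a derivation of $\CIN(S)$ and that it is a vector field. Choose a chart $\psi:W'\to\tilde{W}'\subseteq\RR^{m}$ about $x$ with $W'\subseteq W$. By \hypref{p:charDer}{Proposition} there is a smaller neighbourhood and a local extension $\hat{Z}\in\Der\CIN(\RR^{m})$ of $(\phi_{t})_{*}(Y|_{V})$, satisfying $\hat{Z}(\mathfrak{n}(\tilde{W}'))\subseteq\mathfrak{n}(\tilde{W}')$. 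Pick $\beta\in\CIN(\RR^{m})$ supported in $\tilde{W}'$ and identically $1$ near $\psi(x)$. Since $\mathfrak{n}(\tilde{W}')$ is an ideal, $\beta\hat{Z}$ still maps it into itself, so by \hypref{p:charDer}{Proposition} it descends to a derivation on $W'$ which, vanishing near $\partial W'$, extends by zero to a global derivation $Z$ of $\CIN(S)$ that agrees with $(\phi_{t})_{*}(Y|_{V})$ on the open set $U$ where $\beta\equiv 1$.

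Finally I would confirm $Z\in\Vect(S)$: as $\beta\hat{Z}$ has compact support it is complete on $\RR^{m}$, so by \hypref{p:intcurve}{Proposition} the maximal integral curves of $Z$ have open domains, whence $Z$ is a vector field by \hypref{p:opendomain}{Proposition} (using that the subcartesian spaces in question are locally compact). Taking $\mathcal{F}=\Vect(S)$, this $Z$ and this neighbourhood $U$ witness local completeness, since $\exp(tX)_{*}Y|_{U}=Z|_{U}$ by construction. I expect essentially all the difficulty to be concentrated in the extension step, specifically in separating the verification that the cut-off derivation corresponds to a derivation of $\CIN(S)$ (via the ideal condition) from the verification that it has open flow domains, two facts that are automatic and simultaneous on a manifold but must be handled independently here.
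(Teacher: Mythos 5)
Your overall strategy is the right one, and in fact it is the strategy the paper itself uses for the closely analogous statements: the paper's own ``proof'' of this proposition is only a citation to \'Sniatycki, but the conjugate-by-the-flow-then-cut-off-and-extend-by-zero argument is exactly what appears in the proofs of \hypref{l:vectZloccompl}{Lemma} and \hypref{p:checkZar}{Proposition}. Your first two steps are fine: openness of $A^X$ makes $\phi_t$ a diffeomorphism from an open neighbourhood $V$ of $y$ onto the open set $W=\phi_{-t}^{-1}(V)$, and the pushforward of $Y|_V$ under a diffeomorphism is a genuine vector field on $W$ with the conjugated local flow.

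The gap is in your final step. The inference ``$\beta\hat Z$ is complete on $\RR^m$ and preserves $\mathfrak{n}(\tilde W')$, hence by \hypref{p:intcurve}{Proposition} the maximal integral curves of $Z$ have open domains'' is invalid: take $S=[0,\infty)$, the global derivation $\partial_x$ of $\CIN(S)$ from the paper's own example (it preserves $\mathfrak{n}([0,\infty))$), and $\beta$ a bump function equal to $1$ near $0$; then $\beta\partial_x$ is compactly supported and complete on $\RR$, yet the descended derivation on $[0,\infty)$ has a maximal integral curve through $0$ with domain of the form $[0,T)$, so it is not a vector field. \hypref{p:intcurve}{Proposition} only identifies an integral curve in $S$ with the ambient one \emph{for the times at which the curve stays in the chart}; it gives no control over whether the ambient curve stays in $\tilde W'$, which is precisely what fails in this example. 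What saves your construction is information you proved in step 2 but did not feed into step 4: $(\phi_t)_*(Y|_V)$ is already a vector field on the open, locally compact subcartesian space $W$, so multiplying it by the smooth function $\psi^*\beta$ again gives a vector field on $W$ (the corollary following \hypref{p:charvect}{Proposition}, or the direct argument of \hypref{p:checkZar}{Proposition}); and a maximal integral curve of the zero-extension $Z$ through a point where $Z\neq0$ cannot reach a zero of $Z$ in finite time (by uniqueness it would have to be the constant curve), so it is trapped in $\{\beta\circ\psi\neq0\}$, whose closure in $S$ you must arrange to be a compact subset of $W'$ --- this also repairs the imprecision in ``$\beta$ supported in $\tilde W'$'', since $\tilde W'$ need not be open in $\RR^m$. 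The curve therefore coincides with a maximal integral curve of a vector field on $W'$ and has open domain, and \hypref{p:opendomain}{Proposition} applies. Note finally that both \hypref{p:opendomain}{Proposition} and this trapping argument require $S$ to be locally compact, a hypothesis not present in the proposition as stated (though satisfied by every space the paper uses).
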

\begin{proof}
See \S4 Theorem 2 of \cite{sniatycki}.
\end{proof}

We will later give examples of subcartesian spaces equipped with smooth stratifications on which we apply the theory of vector fields, but in order to do this we will need some more terminology coming from the theory of stratified and decomposed spaces.
\begin{definition}[Refinements of Decomposed Spaces]
Fix a differential space $X$ with smooth decompositions $\mathcal{D}_1$ and $\mathcal{D}_2$.  $\mathcal{D}_1$ is a \emph{refinement} of $\mathcal{D}_2$, denoted $\mathcal{D}_1\geq\mathcal{D}_2$ if for every piece $P_1\in\mathcal{D}_1$, there exists $P_2\in\mathcal{D}_2$ such that $P_1\subseteq P_2$.  This induces a partial ordering on the set of decompositions on $X$. We say that $\mathcal{D}$ is \emph{minimal} if for any $\mathcal{D}'$ such that $\mathcal{D}\geq\mathcal{D}'$, we have $\mathcal{D}=\mathcal{D}'$.
\end{definition}

\begin{example}
The square $[0,1]^2$ with the decomposition given in \hypref{x:square}{Example} is minimal.
\end{example}

\begin{theorem}[Bierstone]\labell{t:bierstone}
If $G$ is a compact Lie group acting on a manifold $M$, then the orbit-type stratification of $M/G$ is minimal.
\end{theorem}
\begin{proof}
See \cite{bierstone1} and \cite{bierstone2}.
\end{proof}

\begin{definition}[Stratified Vector Fields]
Let $S$ be a smooth stratified space.  Let $X\in\Vect(S)$.  If for each stratum $P$ of $S$, $X|_P$ is a smooth vector field on $P$ as a smooth manifold, then we call $X$ \emph{stratified}. Denote the set of all stratified vector fields on $S$ by $\Vect_{strat}(S)$.
\end{definition}

\begin{remark}
As noted before, different terminology and definitions appear in the literature.  For example, in \cite{sniatycki}, \'Sniatycki defines a stratified vector field on a smooth stratified space $S$ as a continuous section (not necessarily smooth) of the Zariski tangent bundle that is smooth on the strata.  A strongly stratified vector field is an element of $\Vect(S)$ that restricts to a smooth section of each strata, which is the definition that we have taken for a stratified vector field.
\end{remark}

\begin{proposition}[\'Sniatycki]\labell{p:stratloccompl}
Let $S$ be a smooth stratified space.  Then $\Vect_{strat}(S)$ forms a locally complete family.
\end{proposition}
\begin{proof}
See \S6 Lemma 11 of \cite{sniatycki}
\end{proof}

Now, let $G$ be a compact Lie group acting on a manifold $M$.

\begin{proposition}\labell{p:vectGloccompl}
$\Vect(M)^G$ is a locally complete Lie subalgebra of $\Vect(M)$.
\end{proposition}

\begin{proof}
For any two invariant vector fields $X$ and $Y$, we have for all $g\in G$, $$g_*[X,Y]=[g_*X,g_*Y]=[X,Y],$$ and for $x\in M$,  $$g\cdot\exp(tX)(\exp(sY)(x))=\exp(tX)(\exp(sY)(g\cdot x))$$ for $s,t$ such that the composition of the curves is defined. Thus $\exp(tX)_*Y$ is locally defined about $G$-orbits.  Since $\Vect(M)$ is locally complete, for any $x\in M$ there exist a vector field $Z$ on $M$ and an open neighbourhood $U$ of $x$ such that $\exp(tX)_*Y$ is defined on $U$ and $(\exp(tX)_*Y)|_U=Z|_U$.  Since $\exp(tX)_*Y$ is invariant about $x$, we can choose $U$ to be a $G$-invariant open neighbourhood.  Let $V\subset U$ be a $G$-invariant open neighbourhood of $x$ such that $\overline{V}\subset U$.  Let $b:M\to\RR$ be a $G$-invariant smooth bump function with support in $U$ and $b|_V=1$.  Then, $bZ\in\Vect(M)^G$ extends $(\exp(tX)_*Y)|_V$ to a invariant vector field on $M$.
\end{proof}

\begin{definition}
Identify $\g$ with the invariant (under left multiplication) vector fields on $G$. Let $\rho:\g\to\Vect(M)$ be the $\g$-representation induced by the $G$-action.
\end{definition}

\begin{proposition}\labell{p:rhogloccompl}
$\rho(\g)$ is a locally complete Lie subalgebra of $\Vect(M)$.
\end{proposition}

\begin{proof}
Let $\xi,\zeta\in\g$, and let $\xi_M=\rho(\xi)$ and $\zeta_M=\rho(\zeta)$.  Then, $\exp(t\xi_M)_*\zeta_M=(\Ad_{\exp(t\xi)}\zeta)_M$.
\end{proof}

Recall that for a compact Lie group $G$, its Lie algebra decomposes as a direct sum of the derived Lie subalgebra and the centre of $\g$: $$\g=[\g,\g]\oplus\mathfrak{z}(\g).$$

\begin{corollary}\labell{c:rhogloccompl}
$\rho([\g,\g])$ and $\rho(\mathfrak{z}(\g))$ are locally complete Lie subalgebras of $\Vect(M)$.
\end{corollary}

\begin{proof}
Since $[\g,\g]$ and $\mathfrak{z}(\g)$ are themselves Lie algebras corresponding to compact Lie groups, this corollary is immediate from the above lemma.
\end{proof}

\begin{definition}\labell{d:A}
Define $\mathcal{A}$ to be the smallest Lie subalgebra of $\Vect(M)$ containing $\Vect(M)^G$ and $\rho(\g)$.
\end{definition}

\begin{remark}
Note that $\mathcal{A}$, $\Vect(M)^G$ and $\rho(\g)$ are not necessarily closed under multiplication by functions in $\CIN(M)$, but $\Vect(M)^G$ is closed under multiplication by $G$-invariant smooth functions.
\end{remark}

\begin{proposition}\labell{p:A}
$\mathcal{A}$ is locally complete, and it is a direct sum of Lie algebras: $$\mathcal{A}=\rho([\g,\g])\oplus\Vect(M)^G.$$
\end{proposition}

\begin{proof}
Let $\xi\in\g$ and $X\in\Vect(M)^G$.  Then, $$[\xi_M,X]=\underset{t\to0}{\lim}\frac{\exp(t\xi_M)_*(X|_{\exp(-t\xi_M)(x)})-X|_x}{t}=0$$ since $\exp(t\xi_M)_*(X|_{\exp(-t\xi_M)(x)})=X|_x$ by left-invariance.  Thus,
\begin{equation}\labell{e:flowscommute}
\exp(t\xi_M)\circ\exp(sX)=\exp(sX)\circ\exp(t\xi_M).
\end{equation}
Now, let $\xi\in\g$ and assume for all $g\in G$ and $x\in M$, we have $$g_*(\xi_M|_x)=\xi_M|_{g\cdot x};$$ that is, $\xi_M$ is invariant.  Then, $$\frac{d}{dt}\Big|_{t=0}(g\cdot\exp(t\xi_M)(x))=\frac{d}{dt}\Big|_{t=0}\exp(t\xi_M)(g\cdot x).$$  The uniqueness property of $\exp$ implies that $$g\cdot\exp(t\xi_M)(x)=\exp(t\xi_M)(g\cdot x).$$  Hence  $(g\exp(t\xi))\cdot x=(\exp(t\xi)g)\cdot x$.  Since this is true for all $g\in G$, $\exp(t\xi)$ must be in the centre of $G$, and hence $\xi\in\mathfrak{z}(\g)$.  Thus, $$\rho(\g)\cap\Vect(M)^G=\rho(\mathfrak{z}(\g)).$$  Since $\rho$ is a Lie algebra homomorphism, from \hypref{e:flowscommute}{Equation}: $\rho(\g)=\rho([\g,\g])\oplus\rho(\mathfrak{z}(\g))$, and we obtain the direct sum structure of $\mathcal{A}$.\\
To show local completeness, by \hypref{p:vectGloccompl}{Proposition} and \hypref{p:rhogloccompl}{Proposition} it suffices to show that for any $\xi\in\g$ and $X\in\Vect(M)^G$, $\exp(t\xi_M)_*X\in\mathcal{A}$ and $\exp(tX)_*\xi_M\in\mathcal{A}$.  The former is immediate since $X$ is invariant.  The latter follows from \hypref{e:flowscommute}{Equation}:
\begin{align*}
\exp(tX)_*(\xi_M|_x)=&~\frac{d}{ds}\Big|_{s=0}\exp(tX)(\exp(s\xi_M)(x))\\
=&~\frac{d}{ds}\Big|_{s=0}\exp(s\xi_M)(\exp(tX)(x))\\
=&~\xi_M|_{\exp(tX)(x)}.
\end{align*}
\end{proof}

We return to Hamiltonian group action setting.  Recall the Poisson structure on $Z/G$, and that we can defined Hamiltonian vector fields on $Z/G$.

$$\xymatrix{
Z \ar[d]_{\pi_Z} \ar[r]^i & M \ar[d]^{\pi} \\
Z/G \ar[r]_j & M/G \\
}$$

\begin{lemma}
For any $h\in\CIN(Z/G)$, the derivation $\{h,\cdot\}_{Z/G}$ is a Hamiltonian vector field.
\end{lemma}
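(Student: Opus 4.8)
The plan is to verify the hypothesis of \hypref{p:opendomain}{Proposition}: since $Z/G$ is locally compact, it suffices to show that every maximal integral curve of $\pois{h}{\cdot}_{Z/G}$ has open domain. That $\pois{h}{\cdot}_{Z/G}$ is a derivation of $\CIN(Z/G)$ is immediate from the Leibniz identity for the Poisson bracket; the real content is the openness, which I would obtain by realising the associated local flow as the descent of an honest local flow on the manifold $M$.

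First I would lift $h$. By \hypref{t:CINZ}{Theorem}, $\pi_Z^*h\in\CIN(Z)^G$, and (as in the proof of that theorem) there is $\hat h\in\CIN(M/G)$ with $j^*\hat h=h$; set $\tilde g:=\pi^*\hat h\in\CIN(M)^G$, so that $i^*\tilde g=\pi_Z^*j^*\hat h=\pi_Z^*h$. Then $X_{\tilde g}\in\ham(M)^G$ is a left-invariant Hamiltonian vector field, hence by \hypref{p:leftinvtham}{Proposition} it is tangent to each $Z_{(H)}$, and in particular to $Z$. Its local flow $\psi_t$ on the manifold $M$ therefore has open domain $A\subseteq\RR\times M$, is $G$-equivariant, and preserves $Z$; restricting it gives a $G$-equivariant local flow on $Z$ with open domain $A\cap(\RR\times Z)$.

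Next I would descend this flow to $Z/G$. Because $G$ is compact, $\pi_Z:Z\to Z/G$ is an open map, so $\id\times\pi_Z$ is open; $G$-equivariance of $\psi_t$ guarantees that the $t$-fibre of $A\cap(\RR\times Z)$ depends only on the $G$-orbit of the base point, so $\psi_t$ descends to a well-defined map $\bar\psi$ whose domain is the (open, fibrewise connected) image of $A\cap(\RR\times Z)$ under $\id\times\pi_Z$, and the local-flow identities for $\bar\psi$ follow from those for $\psi_t$. It remains to identify the infinitesimal generator $Y$ of $\bar\psi$ with $\pois{h}{\cdot}_{Z/G}$. For $f\in\CIN(Z/G)$ choose $\hat f\in\CIN(M/G)$ with $j^*\hat f=f$; differentiating $f\circ\bar\psi_t$ and using $X_{\tilde g}(\pi^*\hat f)=\pois{\pi^*\hat h}{\pi^*\hat f}_M$ together with the fact that $\pi$ and $j$ are Poisson morphisms gives
$$\pi_Z^*(Yf)=i^*\bigl(X_{\tilde g}(\pi^*\hat f)\bigr)=i^*\pi^*\pois{\hat h}{\hat f}_{M/G}=\pi_Z^*j^*\pois{\hat h}{\hat f}_{M/G}=\pi_Z^*\pois{h}{f}_{Z/G}.$$
Since $\pi_Z^*$ is injective, $Y=\pois{h}{\cdot}_{Z/G}$. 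Thus $\pois{h}{\cdot}_{Z/G}$ admits a local flow $\bar\psi$ with open domain, so all its maximal integral curves are open and it is a vector field.

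The main obstacle I anticipate is exactly the descent step: proving that the quotient flow $\bar\psi$ has open domain and that its generator is \emph{precisely} the Poisson derivation. The openness hinges on combining $\pi_Z$ being open with the $G$-equivariance of $\psi_t$ and its invariance of $Z$, while the generator identification repackages the already-established Poisson-morphism property of $\pi$ and $j$ (via \hypref{l:jpois1}{Lemma} and \hypref{l:jpois2}{Lemma}). It is worth noting why the lift to $M$ is needed rather than a direct stratum-by-stratum argument: a maximal integral curve of $\pois{h}{\cdot}_{Z/G}$ lying in a stratum $(Z/G)_{(H)}$ could a priori reach the frontier of that stratum in finite time and then continue into a lower stratum, which would obscure the openness of its domain; passing through the genuine manifold flow on $M$, which is blind to the stratification, sidesteps this difficulty entirely.
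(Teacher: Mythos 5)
Your proof is correct, but it takes a genuinely different route from the paper's. The paper disposes of this lemma in three lines by citing Lerman--Sjamaar for the two key facts --- existence and uniqueness of maximal integral curves of $\pois{h}{\cdot}_{Z/G}$, and the fact that these curves are confined to the connected components of the orbit-type strata --- and then observes that, since those components are manifolds, the domains are open, so \hypref{p:opendomain}{Proposition} applies. (Note that the stratum-by-stratum argument you dismiss in your closing paragraph is precisely the paper's argument; your worry that a curve might exit a stratum through its frontier is exactly what the cited Lerman--Sjamaar result rules out.) What you do instead is essentially reprove the relevant part of Lerman--Sjamaar inside the framework the paper has already built: lift $h$ through $\CIN(Z/G)\cong\CIN(M/G)|_{Z/G}$ and $\pi^*$ to $\tilde g\in\CIN(M)^G$, use \hypref{p:leftinvtham}{Proposition} to see that $X_{\tilde g}$ preserves $Z$ (the substantive point being that $\Phi$ is conserved along its flow, which is what the proof of that proposition establishes), push the $G$-equivariant local flow down through the open map $\pi_Z$, and identify the generator with $\pois{h}{\cdot}_{Z/G}$ via the Poisson-morphism property of $\pi$ and $j$. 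This buys self-containedness and makes visible why the flow exists globally on $Z/G$ rather than only stratumwise, at the cost of redoing work the paper outsources; both arguments terminate in the same appeal to \hypref{p:opendomain}{Proposition}. The one step you state rather than prove --- that openness of the domains of the descended flow's curves forces the \emph{maximal} integral curves to have open domains --- is a standard extension-by-uniqueness argument and is not a gap.
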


\begin{proof}
Sjamaar-Lerman prove the existence and uniqueness of maximal integral curves of these derivations, and that they remain in the orbit-type strata (see \cite{lerman-sjamaar}).  Since these strata are manifolds, the maximal integral curves have open domains.  Hence, by \hypref{p:opendomain}{Proposition}, they are vector fields.
\end{proof}

\begin{proposition}
$\ham(Z/G)$ is a locally complete family in $\Vect(Z/G)$.
\end{proposition}
\begin{proof}
See \S7 Proposition 4 in \cite{sniatycki}.
\end{proof} 
\section{The Orbital Tangent Bundle}\labell{s:orbitaltangentbundle}

In this section, for a fixed family of vector fields we introduce a ``subbundle'' of the Zariski tangent bundle consisting of vectors that are fibrewise linear combinations of vectors in the images of vector fields in the family.  We show that the family of \emph{all} vector fields yields such a tangent bundle that is locally trivial on an open dense subset, as well as equal to the Zariski tangent bundle over an open dense subset.

\begin{definition}[Orbital Tangent Bundle]
Let $\mathcal{F}$ be a family of vector fields on $S$.  For each $x\in S$, denote by $\widehat{T}^\mathcal{F}_xS$ the linear subspace of $T_xS$ spanned by all vectors $v\in T_xS$ such that there exists a vector field $X\in\mathcal{F}$ with $v=X|_x$.  If $\mathcal{F}=\Vect(S)$, then we will denote this space by $\widehat{T}_xS$.  We will call $\widehat{T}^\mathcal{F}_xS$ the \emph{orbital tangent space} of $S$ at $x$ with respect to $\mathcal{F}$.  Let $\widehat{T}^\mathcal{F}S$ be the (disjoint) union $$\widehat{T}^\mathcal{F}S:=\bigcup_{x\in S}\widehat{T}^\mathcal{F}_xS.$$  We will call $\widehat{T}^\mathcal{F}S$ the \emph{orbital tangent bundle} with respect to $\mathcal{F}$.  It is a differential subspace of $TS$.  Denote by $\widehat{\tau}_\mathcal{F}$ the restriction of $\tau:TS\to S$ to $\widehat{T}^\mathcal{F}S$ and by $\delta_\mathcal{F}(x)$ the dimension $\dim(\widehat{T}^\mathcal{F}_xS)$.
\end{definition}

\begin{remark}
Since $\widehat{T}^\mathcal{F}S$ is a differential subspace of $TS$, a chart $\varphi:U\to\tilde{U}\subseteq\RR^n$ on $S$ induces a chart $(\varphi\circ\widehat{\tau}_{\mathcal{F}},\varphi_*|_{\varphi\circ\widehat{\tau}_{\mathcal{F}}})$ on $\widehat{T}^\mathcal{F}S$, which we shall denote simply as $\varphi_*$.  This is just a restriction of the corresponding chart on $TS$.  It makes the following diagram commute.

$$\xymatrix{
\widehat{T}^\mathcal{F}S|_U \ar[d]_{\widehat{\tau}_{\mathcal{F}}} \ar[r]^{\varphi_*} & T\RR^n \ar[d]^{\tau} \\
U \ar[r]_{\varphi} & \RR^n
}$$

This extends to (fibred) exterior powers of $\widehat{T}^\mathcal{F}S$ in the natural way; \emph{i.e.} to $$\bigwedge_S^k\widehat{T}^\mathcal{F}S:=\bigcup_{x\in S}\bigwedge^k\widehat{T}^\mathcal{F}_xS.$$
\end{remark}

\begin{lemma}\labell{l:lwrsemicont}
The map $\delta_\mathcal{F}:S\to\ZZ$ is lower semicontinuous.
\end{lemma}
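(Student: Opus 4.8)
I want to show that $\delta_{\mathcal F}(x) = \dim(\check T^{\mathcal F}_x S)$ does not drop as I move to nearby points; that is, for each $x_0 \in S$ there is a neighbourhood $U$ of $x_0$ on which $\delta_{\mathcal F}(y) \geq \delta_{\mathcal F}(x_0)$.

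The plan is to work inside a single chart and exhibit enough vector fields from $\mathcal F$ whose values stay linearly independent near $x_0$. First I would fix $x_0 \in S$ and set $k := \delta_{\mathcal F}(x_0)$. By definition of the orbital tangent space, there exist vector fields $X_1, \dots, X_k \in \mathcal F$ such that $X_1|_{x_0}, \dots, X_k|_{x_0}$ are linearly independent in $T_{x_0}S$. Next I would choose a chart $\varphi \colon U \to \tilde U \subseteq \RR^n$ about $x_0$ and, using \hypref{p:charDer}{Proposition}, shrink $U$ so that each $X_i|_U$ admits a local extension $\tilde X_i \in \Der\CIN(\RR^n)$ with $\varphi_*(X_i|_U) = \tilde X_i|_{\tilde U}$. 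Under the fibrewise-linear chart $\varphi_*$ the vectors $X_i|_y$ correspond to $\tilde X_i|_{\varphi(y)}$, so the linear independence question transfers to $\RR^n$, where I can use ordinary smoothness.

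The key step is the semicontinuity of matrix rank. Consider the $k \times n$ matrix $A(p)$ whose $i$th row is the coefficient vector of $\tilde X_i$ at $p \in \tilde U$; its entries are smooth functions of $p$. Linear independence of $X_1|_{x_0}, \dots, X_k|_{x_0}$ means $A(\varphi(x_0))$ has rank $k$, so some $k \times k$ minor is nonzero at $\varphi(x_0)$. By continuity of that minor (a polynomial in the smooth entries of $A$), it remains nonzero on an open neighbourhood $\tilde V \subseteq \tilde U$ of $\varphi(x_0)$; set $V := \varphi^{-1}(\tilde V)$. Then for every $y \in V$ the vectors $\tilde X_1|_{\varphi(y)}, \dots, \tilde X_k|_{\varphi(y)}$ are linearly independent in $T_{\varphi(y)}\RR^n$. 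Since $\varphi_*$ restricted to each fibre $\check T^{\mathcal F}_y S$ is a linear injection into $T_{\varphi(y)}\RR^n$ sending $X_i|_y$ to $\tilde X_i|_{\varphi(y)}$, the vectors $X_1|_y, \dots, X_k|_y$ are themselves linearly independent, whence $\delta_{\mathcal F}(y) \geq k = \delta_{\mathcal F}(x_0)$. This proves lower semicontinuity.

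The point that needs a little care—and which I expect to be the main subtlety rather than a deep obstacle—is the transfer of linear independence through the chart. I must make sure that the extensions $\tilde X_i$ genuinely restrict to the chosen vector fields $X_i$ on all of $V$ (not merely at $x_0$), so that $\tilde X_i|_{\varphi(y)}$ really is the image $\varphi_*(X_i|_y)$ for every $y \in V$; this is exactly what \hypref{p:charDer}{Proposition} guarantees after shrinking $U$, and it is why the extensions must be fixed before invoking rank semicontinuity. The one thing to double-check is that the linear map $\varphi_*|_{T_yS}$ is injective, so that independence downstairs forces independence upstairs; this follows because $\varphi$ is a diffeomorphism onto its image as a differential subspace, so its pushforward is fibrewise injective. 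With these two observations in place the argument reduces entirely to the elementary fact that rank is lower semicontinuous for matrices with continuous entries.
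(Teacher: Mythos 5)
Your proof is correct and follows essentially the same route as the paper: pick vector fields in $\mathcal{F}$ whose values at the point are linearly independent and observe that this independence persists on a neighbourhood, giving $\delta_{\mathcal{F}}(y)\geq\delta_{\mathcal{F}}(x_0)$ nearby. The only difference is that you spell out, via a chart and local extensions from \hypref{p:charDer}{Proposition} together with semicontinuity of matrix rank, the step the paper compresses into the phrase ``linear independence is an open condition.''
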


\begin{proof}
Define $S_i:=\{x\in S~|~\delta_{\mathcal{F}}(x)\geq i\}$.  The goal is to show that $S_i$ is open for each $i$.  Let $y\in S_i$.  Then there exist $Y_1,...,Y_k\in\mathcal{F}$, where $k\geq i$, such that $\{Y_1|_y,...,Y_k|_y\}$ is a basis for $\widehat{T}^\mathcal{F}_yS$.  Linear independence is an open condition, and so there exists an open neighbourhood $U$ of $y$ such that $\{Y_1|_z,...,Y_k|_z\}$ is linear independent for all $z\in U$.  Hence, $\widehat{T}^\mathcal{F}_zS$ contains the span of $\{Y_1|_z,...,Y_k|_z\}$ as a linear subspace for each $z\in U$.  Thus, $\delta_\mathcal{F}(z)\geq k\geq i$.  Thus, $U\subseteq S_i$.
\end{proof}

\begin{proposition}[Local Triviality of $\widehat{T}^\mathcal{F}S$]\labell{p:checkTSloctriv}
There exists an open dense subset $U\subseteq S$ such that $\widehat{\tau}_\mathcal{F}|_{U}:\widehat{T}^\mathcal{F}S|_U\to U$ is locally trivial.
\end{proposition}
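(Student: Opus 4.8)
The plan is to exhibit the required set as the intersection of two open dense loci. The first, call it $U_1$, is the open dense subset on which $\tau:TS\to S$ is locally trivial, furnished directly by \hypref{t:TSloctriv}{Theorem}. The second, $U_0$, is the set of points at which the rank function $\delta_\mathcal{F}$ is locally constant. Openness of $U_0$ is immediate. For density I would fix a nonempty open $V$ and a chart $\varphi:W\to\tilde U\subseteq\RR^n$ with $W\subseteq V$; since $\varphi$ is a diffeomorphism onto a differential subspace, $\varphi_*$ is injective on each tangent space, so $\delta_\mathcal{F}\leq n$ on $W$ and $\delta_\mathcal{F}$ attains a maximum $k^*$ there. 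By \hypref{l:lwrsemicont}{Lemma} the set $\{y\in W:\delta_\mathcal{F}(y)\geq k^*\}$ is open and nonempty, and by maximality of $k^*$ it equals $\{\delta_\mathcal{F}=k^*\}$, an open subset of $V$ on which $\delta_\mathcal{F}$ is constant. Hence $U_0$ meets every nonempty open set. As $S$ is locally compact Hausdorff, hence Baire, $U:=U_0\cap U_1$ is open and dense.

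The heart of the argument is to construct, near each $x\in U$, a smooth fibrewise-linear trivialisation. Setting $k:=\delta_\mathcal{F}(x)$, I would choose $X_1,\dots,X_k\in\mathcal{F}$ whose values $X_1|_x,\dots,X_k|_x$ form a basis of $\check{T}^\mathcal{F}_xS$. As in the proof of \hypref{l:lwrsemicont}{Lemma}, linear independence is open, so after shrinking to an open $W\subseteq U$ about $x$ the vectors $X_1|_y,\dots,X_k|_y$ remain independent for every $y\in W$; because $\delta_\mathcal{F}\equiv k$ on $W$, they then form a basis of $\check{T}^\mathcal{F}_yS$. Define $\psi:W\times\RR^k\to\check{T}^\mathcal{F}S|_W$ by $\psi(y,a_1,\dots,a_k)=\sum_{i=1}^k a_i\,X_i|_y$. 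This is a fibrewise-linear bijection commuting with $\check{\tau}_\mathcal{F}$ and the projection $\operatorname{pr}_1$, and it remains only to check it is a diffeomorphism.

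Smoothness of $\psi$ follows from \hypref{l:smoothmapext}{Lemma}: in the chart $\varphi_*$ the coordinates of $\sum_i a_i X_i|_y$ are polynomial in the $a_i$ with coefficients the smooth chart components of the $X_i$, so $\psi$ extends to a smooth map of Euclidean spaces. The main obstacle is the smoothness of $\psi^{-1}$, that is, expressing the coefficients $a_i$ smoothly in terms of the point of $\check{T}^\mathcal{F}S|_W$. Writing $\varphi_*X_i|_y=\sum_{\alpha=1}^n c^\alpha_i(y)\,\partial_\alpha$ with $c^\alpha_i\in\CIN(W)$, the $n\times k$ matrix $C(y)=(c^\alpha_i(y))$ has rank $k$ at $x$, hence rank $k$ on a smaller neighbourhood; choosing rows $\alpha_1,\dots,\alpha_k$ whose $k\times k$ minor $C_0(y)$ is invertible near $x$, a vector $v=\sum_i a_i X_i|_y$ has fibre-coordinates $v^{\alpha}=\sum_i a_i c^\alpha_i(y)$, whence $(a_1,\dots,a_k)^{\mathsf T}=C_0(y)^{-1}(v^{\alpha_1},\dots,v^{\alpha_k})^{\mathsf T}$. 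Since $C_0(y)^{-1}$ has smooth entries and the $v^{\alpha_j}$ are among the fibre-coordinate functions of the chart $\varphi_*$ on $TS$, each $a_i$ is smooth on $\check{T}^\mathcal{F}S|_W$, so by \hypref{l:smoothmapext}{Lemma} $\psi^{-1}$ is smooth.

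Thus $\psi$ is a diffeomorphism onto $\check{T}^\mathcal{F}S|_W$, exhibiting the local product structure with fibre $\RR^{k}=\RR^{\delta_\mathcal{F}}$; since $\delta_\mathcal{F}$ is constant on each connected component of $U$, this gives local triviality of $\check{\tau}_\mathcal{F}|_U$. I expect the matrix-inversion step producing a smoothly varying dual frame to be the only delicate point, the rest being bookkeeping with the Zariski tangent charts of \hypref{p:charTS}{Proposition}.
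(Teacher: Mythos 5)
Your proof is correct and follows essentially the same route as the paper: lower semicontinuity of $\delta_\mathcal{F}$ forces the rank to be locally constant on an open dense set (the paper phrases this as locating a local maximum of $\delta_\mathcal{F}$ near any given point), and then openness of linear independence lets a frame $X_1|_x,\dots,X_k|_x$ persist as a basis of nearby fibres, giving the local product structure. The only difference is that you go further than the paper in actually verifying that $\psi$ and $\psi^{-1}$ are smooth (via the invertible $k\times k$ minor of the coefficient matrix), a point the paper leaves implicit.
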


\begin{proof}
We will show that for any point $x\in S$ and any open set $U$ containing $x$, there is a point $z\in U$ and an open neighbourhood $V\subseteq U$ of $z$ so that $\widehat{\tau}_\mathcal{F}^{-1}(V)\cong V\times F$ for some vector space $F$.\\

Fix $x\in S$.  Define $S_i$ as in the proof of \hypref{l:lwrsemicont}{Lemma}.  Define $$m:=\underset{V\backepsilon x}{\inf}\{\sup\{k~|~S_k\cap V\neq\emptyset\}\}$$ where $V$ runs through all open neighbourhoods of $x$.  There exists an open neighbourhood $W$ of $x$ such that $\sup_{z\in W}\{\delta_\mathcal{F}(z)\}=m$.  Now fix $z\in W$ such that $\delta_\mathcal{F}(z)=m$.  Then, there are vector fields $Y_1,...,Y_m\in\mathcal{F}$ such that $\{Y_1|_z,...,Y_m|_z\}$ spans $\widehat{T}^\mathcal{F}_zS$.  Since linear independence is an open condition and $m$ is maximal, there is an open neighbourhood $V\subseteq W$ of $z$ such that $\{Y_1|_y,...,Y_m|_y\}$ spans $\widehat{T}^\mathcal{F}_yS$ for all $y\in V$.  Hence, $\widehat{T}^\mathcal{F}S$ is locally trivial over $V$.\\

Now, let $U$ be any open subset containing $x$.  We claim that there exists some $z\in W\cap U$ such that $\delta_\mathcal{F}(z)=m$.  Assume otherwise.  If $\sup_{z\in W\cap U}(\delta_\mathcal{F}(z))>m$, then this contradicts the definition of $W$.  If $\sup_{z\in W\cap U}\{\delta_\mathcal{F}(z)\}<m$, then this contradicts the definition of $m$.  Now, choose an open neighbourhood $V\subseteq W\cap U$ of $z$ as above, and the result follows.
\end{proof}

\begin{corollary}\labell{c:checkTopendense}
Let $\mathcal{F}$ be a locally complete family of vector fields, and let $U\subseteq S$ be an open dense subset on which $\widehat{T}^\mathcal{F}S$ is locally trivial.  Then, $\widehat{\tau}_\mathcal{F}^{-1}(U)$ is open and dense in $\widehat{T}^\mathcal{F}S$.
\end{corollary}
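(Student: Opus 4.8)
The plan is to prove the two assertions separately, since openness is formal and all of the content lies in density.

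\emph{Openness.} Here I would simply observe that $\check{\tau}_\mathcal{F}$ is by definition the restriction to $\check{T}^\mathcal{F}S$ of the bundle projection $\tau:TS\to S$, which is smooth and hence continuous. Because $\check{T}^\mathcal{F}S$ carries the subspace topology inherited from $TS$, the restriction $\check{\tau}_\mathcal{F}$ is continuous as well, so $\check{\tau}_\mathcal{F}^{-1}(U)$ is open in $\check{T}^\mathcal{F}S$ for the open set $U$. No further hypotheses are needed for this half.

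\emph{Density.} The key idea is that every orbital tangent vector is the value at its base point of a globally defined continuous section of $\check{\tau}_\mathcal{F}$ assembled from the vector fields in $\mathcal{F}$. Given $v\in\check{T}^\mathcal{F}_xS$, the definition of the orbital tangent space lets me write $v=\sum_{j=1}^m c_j\,Y_j|_x$ for finitely many $Y_j\in\mathcal{F}$ and scalars $c_j$, finiteness being guaranteed by $T_xS$ being finite dimensional. I then set $\sigma:=\sum_{j=1}^m c_j Y_j$. Since any real linear combination of derivations is again a derivation, $\sigma$ is a global derivation of $\CIN(S)$, hence a smooth — in particular continuous — section of $\tau$, and for every $y\in S$ the value $\sigma(y)=\sum_j c_j Y_j|_y$ lies in the linear subspace $\check{T}^\mathcal{F}_yS$. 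Thus $\sigma$ is a continuous section of $\check{\tau}_\mathcal{F}$ satisfying $\check{\tau}_\mathcal{F}\circ\sigma=\id_S$ and $\sigma(x)=v$.

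With this section in hand, density is forced by density of $U$: for any open neighbourhood $W$ of $v$ in $\check{T}^\mathcal{F}S$, continuity of $\sigma$ makes $\sigma^{-1}(W)$ an open neighbourhood of $x$, so density of $U$ yields a point $y\in\sigma^{-1}(W)\cap U$; then $\sigma(y)\in W$ while $\check{\tau}_\mathcal{F}(\sigma(y))=y\in U$, so $W$ meets $\check{\tau}_\mathcal{F}^{-1}(U)$. The step I expect to require the most care is the verification that $\sigma$ is genuinely a continuous section \emph{into} $\check{T}^\mathcal{F}S$: this rests on the identification of global derivations with smooth sections of $TS$ together with the fact that $\check{T}^\mathcal{F}S$ carries the subspace topology, so that a map which is continuous into $TS$ and lands in the subset is automatically continuous into $\check{T}^\mathcal{F}S$. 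I would also remark that local completeness of $\mathcal{F}$ and local triviality of $\check{\tau}_\mathcal{F}$ over $U$ do not enter this argument directly; the local-triviality hypothesis serves only to guarantee, via \hypref{p:checkTSloctriv}{Proposition}, that an open dense set $U$ of the required kind exists.
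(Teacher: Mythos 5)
Your proof is correct and follows essentially the same route as the paper: both arguments establish openness from continuity of $\check{\tau}_\mathcal{F}$ and establish density by realising a given $v\in\check{T}^\mathcal{F}_xS$ as the value at $x$ of finitely many vector fields from $\mathcal{F}$, whose values over nearby points of the dense set $U$ lie in the orbital tangent spaces there and approximate $v$. Your packaging of this via the continuous section $\sigma=\sum_j c_jY_j$ is a clean way of saying what the paper expresses through ``linear independence is an open condition,'' and your closing remark that local completeness and local triviality are not actually used matches the paper's proof, which uses neither.
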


\begin{proof}
By continuity, $\widehat{\tau}_\mathcal{F}(U)$ is open.  Let $x\in S\smallsetminus U$, and let $Y_1,...,Y_k\in\mathcal{F}$ such that $\{Y_1|_x,...,Y_k|_x\}$ forms a basis of $\widehat{T}^\mathcal{F}_xS$.  Since linear independence is an open condition, there is an open neighbourhood $V$ of $x$ on which $\{Y_1|_y,...,Y_k|_y\}$ is linear independent for all $y\in V$, and their span is a subset of $\widehat{T}^\mathcal{F}_yS$.  Hence, $\widehat{T}^\mathcal{F}_xS\subseteq\overline{\widehat{\tau}_\mathcal{F}^{-1}(U)}$.
\end{proof}

\begin{remark}
The above corollary extends to exterior powers of the fibres of  $\widehat{T}^{\mathcal{F}}S$; that is, there exists an open dense subset $U\subseteq S$ on which $\bigwedge_S^k\widehat{T}^\mathcal{F}S\Big|_U\to U$ is locally trivial.
\end{remark}

\begin{proposition}[Zariski Versus Orbital Tangent Bundles]\labell{p:checkZar}
Let $S$ be a locally compact subcartesian space. Then there exists an open dense subset $U\subseteq S$ such that for each $x\in U$, $$\widehat{T}_xS=T_xS.$$
\end{proposition}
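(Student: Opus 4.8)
The plan is to locate an open dense subset on which $S$ is a genuine smooth manifold, and to observe that on such a set the two tangent spaces must coincide. Recall that $\check{T}_xS$ is by definition the span of the values at $x$ of global vector fields, so $\check{T}_xS\subseteq T_xS$ always holds; the content is the reverse inclusion on a large set. Write $U$ for the \emph{manifold locus} of $S$, i.e.\ the set of points having an open neighbourhood diffeomorphic to an open subset of some $\RR^d$. Since this is a local condition, $U$ is open, and it suffices to prove (i) that $\check{T}_xS=T_xS$ for every $x\in U$, and (ii) that $U$ is dense.

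For (i), fix $x\in U$ and a neighbourhood $V\subseteq U$ that is a manifold; since $V$ is open in $S$ the sheaf $\CIN$ agrees near $x$, so $T_xS=T_xV$ is the ordinary tangent space. Given $v\in T_xV$, I would choose an ordinary smooth vector field $Y$ on $V$ with $Y|_x=v$, together with a compactly supported bump function $b\in\CIN(S)$ with $\supp b\subseteq V$ and $b\equiv1$ near $x$ (such $b$ exists since $S$ is locally compact and admits smooth partitions of unity). Then $bY$, extended by zero, is a global derivation of $\CIN(S)$. Each of its maximal integral curves either stays in the compact set $\supp b$, where it is the flow of a compactly supported field on the manifold $V$ and hence complete, or is constant; in either case its domain is open, so by \hypref{p:opendomain}{Proposition} we have $bY\in\Vect(S)$. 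Since $(bY)|_x=v$, we get $v\in\check{T}_xS$, proving (i).

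For (ii) I would show the stronger statement that the open dense set of \hypref{t:TSloctriv}{Theorem} is contained in $U$. On that set $\tau\colon TS\to S$ is locally trivial with constant fibre dimension $d$, so near any of its points one obtains, from the trivialisation and the identification of $\Der\CIN(S)$ with the smooth sections of $\tau$, derivations $X_1,\dots,X_d\in\Der\CIN(S)$ whose values form a basis of $T_yS$ for all nearby $y$. Passing to a chart $\varphi\colon V\to\tilde{V}\subseteq\RR^n$ and using \hypref{p:charDer}{Proposition}, each $X_i$ has a local extension $\tilde{X}_i\in\Der\CIN(\RR^n)$ preserving $\mathfrak{n}(\tilde{V})$, so by \hypref{p:intcurve}{Proposition} the flows of the $\tilde{X}_i$ preserve $\tilde{V}$; by \hypref{p:charTS}{Proposition} the vectors $\tilde{X}_i(\varphi(x))$ are independent and span $T_{\varphi(x)}\tilde{V}$. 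Flowing, the map $(t_1,\dots,t_d)\mapsto\exp(t_1\tilde{X}_1)\cdots\exp(t_d\tilde{X}_d)(\varphi(x))$ is then an immersion at $0$ whose image is a $d$-dimensional submanifold $N\subseteq\tilde{V}$ through $\varphi(x)$.

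The main obstacle is to show that $N$ fills an entire neighbourhood of $\varphi(x)$ in $\tilde{V}$, for then $\varphi^{-1}$ carries $N$ to a manifold chart and $x\in U$. This is a constant-rank/Frobenius-type statement: the fields $\tilde{X}_i$ restrict to a rank-$d$ distribution tangent to $\tilde{V}$ whose dimension matches $\dim T_y\tilde{V}$ everywhere nearby, so no point of $\tilde{V}$ should be able to escape the leaf $N$. The integrability making this work is precisely the local completeness of $\Vect(S)$ (\hypref{p:vectloccompl}{Proposition}), which guarantees that the orbit swept out by these flows is a manifold carrying the full orbital tangent space; establishing this filling property carefully (or, alternatively, invoking the orbit structure developed in \cite{lsw10}) is where the real work lies. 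Granting it, $U$ contains the open dense set of \hypref{t:TSloctriv}{Theorem} and is therefore open and dense, and together with (i) the proposition follows.
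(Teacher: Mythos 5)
Your part (i) is fine, and the mechanism there (cut a coordinate frame off by a bump function supported in a chart) is essentially the same device the paper's proof ends with. The genuine gap is part (ii), and it is not just that the ``filling'' step is left open: the route you propose for density cannot be completed, because local triviality of $\tau:TS\to S$ does \emph{not} force $S$ to be a manifold nearby. Take $S=[0,\infty)$. Every $f\in\mathfrak{n}([0,\infty))$ has $f'(0)=0$ (compute the one-sided difference quotient), so $T_0S\cong\RR$ and $TS\cong[0,\infty)\times\RR$ is globally trivial; yet $0$ has no neighbourhood homeomorphic, let alone diffeomorphic, to an open subset of any $\RR^d$, and indeed $\check{T}_0S=\{0\}\neq T_0S$ there, consistent with the paper's own example that $\partial_x$ is not a vector field on $[0,\infty)$. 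So the locally trivial locus of $TS$ is in general strictly larger than your manifold locus, and the inclusion you want to prove is false. A second, related problem: your ``flow out a leaf'' argument presupposes that the frame derivations $X_1,\dots,X_d$ admit local flows, i.e.\ are already vector fields. A nowhere-vanishing derivation need not be one (again $\partial_x$ on $[0,\infty)$), and when it is not, the ``orbit swept out by these flows'' is a single point and there is nothing to fill; local completeness of $\Vect(S)$ (\hypref{p:vectloccompl}{Proposition}) cannot rescue this, since it only concerns derivations that are already in $\Vect(S)$. That is the actual crux, and it sits \emph{before} any Frobenius-type argument.

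The paper's proof never asserts that $S$ is a manifold anywhere. On the open dense set where $TS$ and $\check{T}S$ are locally trivial it takes a chart $\varphi:V\to\tilde{V}\subseteq\RR^n$ with $n=\dim T_xS$, invokes \cite{lsw10} for the fact that the coordinate derivations $\partial_1,\dots,\partial_n$ restrict to derivations of $\CIN(V)$ trivialising $TV$, multiplies by a bump function $b$ supported in $V$, and then argues \emph{directly} that each $b\partial_i$ is a vector field, by comparing its maximal integral curves with those of a local extension $\tilde{X}_i\in\Der\CIN(\RR^n)$ (\hypref{p:intcurve}{Proposition}) and applying the open-domain criterion of \hypref{p:opendomain}{Proposition}; since $b\partial_i=\partial_i$ near $x$, their values then span $T_yS$ for $y$ near $x$. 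If you want to repair your outline, the piece you must supply is exactly this: a proof that the cut-off frame derivations have open integral-curve domains (equivalently, that the flows of the extensions $\tilde{X}_i$ preserve $\tilde{V}$) --- not a proof that $S$ is locally Euclidean.
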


\begin{proof}
By \hypref{t:TSloctriv}{Theorem} and \hypref{p:checkTSloctriv}{Proposition}, there exists an open dense subset $U\subseteq S$ on which $TS$ and $\widehat{T}S$ are locally trivial.  Let $x\in U$, and let $\varphi:V\to\tilde{V}\subseteq\RR^n$ be a chart about $x$ where $V\subseteq U$ and $n=\dim(T_xS)$ (see \cite{LSW}).  Then the derivations $\partial_1,...,\partial_n$ on $V$ arising from coordinates on $\RR^n$ give a local trivialisation of $TV$ (again, see \cite{LSW}).  Let $W_1$ and $W_2$ be open neighbourhoods of $x$ satisfying $\overline{W_1}\subset W_2\subset\overline{W_2}\subset V$.  Let $b:S\to\RR$ be a smooth bump function that is equal to 1 on $W_1$ and 0 outside of $W_2$.  Then $b\partial_1,...,b\partial_n$ extend to derivations on all of $S$, and we claim that they are vector fields.\\

Now, for $i=1,...,n$, shrinking $V$ if necessary, there exist $\tilde{X}_1,...,\tilde{X}_n\in\Der\CIN(\RR^n)$ satisfying $\varphi_*(b\partial_i)=\tilde{X}_i|_{\tilde{V}}$.  Each $\tilde{X}_i$ gives rise to a local flow $\exp(\cdot \tilde{X}_i)(\cdot)$, such that for each $y\in \tilde{V}$, $\exp(\cdot \tilde{X}_i)(\varphi(y))$ has an open domain.  By \hypref{p:intcurve}{Proposition}, $\exp(\cdot \tilde{X}_i)(\varphi(y))=\varphi(\exp(t b\partial_i)(y))$ for all $t\in I^{b\partial_i}_y$ for which the integral curve lies in $V$.  But since $b$ is supported in $V$, the entire curve $\exp(\cdot b\partial_i)(y)$ is in $V$.  Hence, $\exp(t \tilde{X}_i)(\varphi(y))\in\tilde{V}$ for all $t\in I^{\tilde{X}_i}_{\varphi(y)}$.  Since $\tilde{X}_i$ is a vector field on $\RR^n$, $I^{\tilde{X}_i}_{\varphi(y)}$ is open, and consequently so is $I^{b\partial_i}_y$.  Thus, by \hypref{p:opendomain}{Proposition} $b\partial_i$ is a vector field on $V$, and since it has been extended as 0 to the rest of $S$, it is a vector field on $S$.  Finally, since $(b\partial_i)|_{W_1}=\partial_i|_{W_1}$ for each $i$, we see that $\widehat{T}_yS=T_yS$ for all $y\in W_1$, since $T_yS$ is the span over $\RR$ of $\{\partial_1|_y,...,\partial_n|_y\}.$
\end{proof} 
\section{Orbits of Families of Vector Fields}\labell{s:orbits}

In this section we review the theory of orbits of families of vector fields, including the Orbit Theorem for subcartesian spaces, proven by \'Sniatycki in \cite{sniatycki}.

\begin{definition}[Orbits]
Let $S$ be a subcartesian space, and let $\mathcal{F}$ be a family of vector fields.  The \emph{orbit} of $\mathcal{F}$ through a point $x$, denoted $O^{\mathcal{F}}_x$ or just $O_x$ if $\mathcal{F}=\Vect(S)$, is the set of all points $y\in S$ such that there exist vector fields $X_1,...,X_k\in\mathcal{F}$ and real numbers $t_1,...,t_k\in\RR$ satisfying $$y=\exp(t_1X_1)\circ...\circ\exp(t_kX_k)(x).$$
Denote by $\mathcal{O}_{\mathcal{F}}$, or just $\mathcal{O}$ if $\mathcal{F}=\Vect(S)$, the set of all orbits $\{O^\mathcal{F}_x~|~x\in S\}$.  Note that $\mathcal{O}_{\mathcal{F}}$ induces a partition of $S$ into connected differential subspaces.
\end{definition}

Given a family of vector fields $\mathcal{F}$ on $S$, there exists a natural topology on the orbits that in general is finer than the subspace topology.  We define this topology here using similar notation as found in \cite{sniatycki} and \cite{sussmann}.  Let $X_1,...,X_k\in\mathcal{F}$.  Let $\xi:=(X_1,...,X_k)$ and $T=(t_1,...,t_k)$, and define $\xi_T(x):=\exp(t_kX_k)\circ...\circ\exp(t_1X_1)(x).$  $\xi_T(x)$ is well-defined for all $(T,x)$ in an open neighbourhood $U(\xi)$ of $(0,x)\in\RR^k\times S$.  Define $U_x(\xi)$ to be the set of all $T\in\RR^k$ such that $\xi_T(x)$ is well-defined; that is, $U_x(\xi)=U(\xi)\cap(\RR^k\times\{x\})$.  Let $i:O^\mathcal{F}_x\hookrightarrow S$ be the inclusion map.  Fix $y\in i(O^\mathcal{F}_x)$ and let $\varphi:V\to\tilde{V}\subseteq\RR^n$ be a chart of $S$ about $y$.  We give $W:=i^{-1}(V\cap i(O^\mathcal{F}_x))$ the strongest topology such that for each $\xi$ and $y\in i(W)$ the map $$\rho_{\xi,y}:U_y(\xi)\to\RR^n:T\mapsto \varphi\circ\xi_T(y)$$ is continuous.  This extends to a topology $\mathcal{T}$ on all of $O^\mathcal{F}_x$, which matches on overlaps (see \cite{sniatycki}).

\mute{

\begin{remark}\labell{r:vectdiffeol}
The \emph{$D$-topology} on a diffeological space $(X,\mathcal{D})$ is the strongest topology on $X$ such that all plots are continuous.  Let $\mathcal{D}_\mathcal{F}$ be the diffeology on $S$ generated by the maps $T\mapsto\xi_T(x)$ for all $X_1,...,X_k\in\mathcal{F}$, $\xi=(X_1,...,X_k)$, and $x\in S$.  Then the $D$-topology generated by $\mathcal{D}_\mathcal{F}$ on $S$ induces the topology described above on each orbit.
\end{remark}

}

\begin{lemma}\labell{l:orbitaltop}
With respect to the topology $\mathcal{T}$, the orbits are connected and pairwise disjoint.
\end{lemma}

\begin{proof}
Fix $x\in S$, and choose $y\in O^\mathcal{F}_x$.  Then there exist $X_1,...,X_k\in\mathcal{F}$ and $t_1,...,t_k\in\RR$ such that $$y=\exp(t_kX_k)\circ...\circ\exp(t_1X_1)(x).$$  Let $T=(t_1,...,t_k)$ and $\xi=(X_1,...,X_k)$.  Then $$y=\xi_T(x).$$  Since $T\mapsto\xi_T(x)$ is continuous with respect to $\mathcal{T}$, and $U_x(\xi)$ is connected, its image is connected.  Hence $x$ and $y$ are in the same connected component of $S$ with respect to $\mathcal{T}$.\\

We now show that each orbit is open and closed with respect to $\mathcal{T}$.  Since the preimage of any orbit via the maps $T\mapsto\xi_T(x)$ is open, each orbit is open in the strongest topology such that each map $\rho_{\xi,x}$ is continuous.  Moreover, since the complement of any orbit is the union of orbits, and hence open, each orbit is closed.
\end{proof}

\begin{example}[Irrational Flow on Torus]
Let $S$ be the torus $\RR^2/\ZZ^2$ and let $\pi:\RR^2\to S$ be the quotient map. Consider the one-element family $\{X\}$ where $X=\pi_*(\partial_1+\sqrt{2}\partial_2)$.  Then for any $x\in S$, $\exp(tX)(x)$ has domain $\RR$, and the orbit is dense in $S$.  $\mathcal{T}$ in this case is such that $O^{\{X\}}_x$ is diffeomorphic to $\RR$.  This is strictly stronger than the subspace topology on the orbit.
\end{example}

\begin{theorem}[Orbit Theorem]\labell{t:singfol}
Let $S$ be a subcartesian space.  Then for any locally complete family of vector fields $\mathcal{F}$, $\mathcal{O}_{\mathcal{F}}$ induces a partition of $S$ into orbits $O^\mathcal{F}_x$, each of which when equipped with the topology $\mathcal{T}$ described above has a smooth manifold structure.  The inclusion $i:O^\mathcal{F}_x\hookrightarrow S$ is smooth, and $i_*:TO^\mathcal{F}_x\to TS$ is a fibrewise linear isomorphism onto $\widehat{T}^\mathcal{F}S|_{O^\mathcal{F}_x}$.
\end{theorem}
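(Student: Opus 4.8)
The plan is to adapt Sussmann's orbit theorem to the subcartesian setting, using local completeness as the substitute for involutivity. First I would verify that $\mathcal{O}_\mathcal{F}$ is a partition: the relation ``$y$ is reachable from $x$ by a finite composition of flows $\exp(t_iX_i)$ with $X_i\in\mathcal{F}$'' is reflexive (the constant curve), symmetric (reverse each flow via $\exp(-t_iX_i)$), and transitive (concatenate), so its classes are the orbits $O^\mathcal{F}_x$. The central structural fact I would establish next is that the orbital tangent distribution $y\mapsto\check{T}^\mathcal{F}_yS$ is invariant under the flows of $\mathcal{F}$: if $X,Y\in\mathcal{F}$ and $(\exp(tX))_*Y|_y$ is defined, local completeness gives $Z\in\mathcal{F}$ agreeing with $\exp(tX)_*Y$ near $\exp(tX)(y)$, so $(\exp(tX))_*(Y|_y)=Z|_{\exp(tX)(y)}\in\check{T}^\mathcal{F}_{\exp(tX)(y)}S$. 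Since $(\exp(tX))_*$ is a fibrewise linear isomorphism (flows are local diffeomorphisms) and applying the inverse flow yields the reverse inclusion, $(\exp(tX))_*$ maps $\check{T}^\mathcal{F}_yS$ isomorphically onto $\check{T}^\mathcal{F}_{\exp(tX)(y)}S$. In particular $\delta_\mathcal{F}$ is constant along each orbit; write $m:=\delta_\mathcal{F}(x)$.

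To build a chart about a point $x$, I would choose $X_1,\dots,X_m\in\mathcal{F}$ with $X_1|_x,\dots,X_m|_x$ a basis of $\check{T}^\mathcal{F}_xS$ and set $\Psi(t_1,\dots,t_m):=\exp(t_mX_m)\circ\cdots\circ\exp(t_1X_1)(x)$, defined for $T$ near $0\in\RR^m$. Fixing a chart $\varphi:V\to\tilde V\subseteq\RR^n$ about $x$ and local extensions $\tilde X_i\in\Der\CIN(\RR^n)$ of $X_i$ given by \hypref{p:charDer}{Proposition}, \hypref{p:intcurve}{Proposition} shows $\varphi\circ\Psi=\exp(t_m\tilde X_m)\circ\cdots\circ\exp(t_1\tilde X_1)(\varphi(x))=:\tilde\Psi$ near $0$, a smooth map $\RR^m\to\RR^n$. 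Differentiating at $0$, where all but one flow is the identity, gives $\partial_{t_i}\tilde\Psi|_0=\tilde X_i|_{\varphi(x)}=\varphi_*(X_i|_x)$; since $\varphi_*$ is injective on $T_xS$ (\hypref{p:charTS}{Proposition}) these columns are independent, so $\tilde\Psi$ is an immersion at $0$, hence (shrinking the domain) an embedding onto an $m$-dimensional submanifold $\tilde N\subseteq\tilde V$. Pulling back, $N:=\Psi(\text{nbhd of }0)=\varphi^{-1}(\tilde N)$ is an $m$-dimensional manifold and $\Psi$ is a homeomorphism onto it.

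The heart of the argument --- and the step I expect to be the main obstacle --- is to show that $N$ is exactly a neighbourhood of $x$ in $O^\mathcal{F}_x$ for the topology $\mathcal{T}$, so that these $\Psi$'s form a compatible atlas. By the differential computation and the flow-invariance of the distribution, each tangent space $T_yN$ is contained in $\check{T}^\mathcal{F}_yS$ and has dimension $m=\dim\check{T}^\mathcal{F}_yS$, so $T_yN=\check{T}^\mathcal{F}_yS\ni Y|_y$ for every $Y\in\mathcal{F}$. Thus each $Y\in\mathcal{F}$ restricts to a smooth vector field on $N$, and by uniqueness of maximal integral curves (\hypref{t:ode}{Theorem}) together with \hypref{p:intcurve}{Proposition}, the $S$-integral curves of $Y$ through points of $N$ coincide with those computed in $N$ and hence stay in $N$. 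Consequently short compositions of flows from $\mathcal{F}$ do not leave $N$, so $N$ is $\mathcal{T}$-open and coincides with the orbit near $x$; the maps $\rho_{\xi,y}$ factor smoothly through $\Psi$, making $\Psi^{-1}$ continuous for $\mathcal{T}$ and identifying $\mathcal{T}$ near $x$ with the manifold topology of $N$. Overlap maps between two such charts are smooth because each is, in a chart of $S$, the composition of an embedding with the smooth inverse of an embedding; covering $O^\mathcal{F}_x$ by such charts yields a smooth manifold structure, with Hausdorffness inherited from $S$ (whose topology is coarser than $\mathcal{T}$) and second-countability following as in the classical orbit theorem.

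Finally, smoothness of $i:O^\mathcal{F}_x\hookrightarrow S$ follows from \hypref{l:smoothmapext}{Lemma}, since in the charts $\Psi$ and $\varphi$ it is the smooth map $\tilde\Psi$, and $i$ is an immersion because $\varphi\circ i\circ\Psi=\tilde\Psi$ is. For each $y$, $i_*$ is therefore injective, and $i_*(T_yO^\mathcal{F}_x)=T_yN=\check{T}^\mathcal{F}_yS$ by the previous paragraph, so $i_*:TO^\mathcal{F}_x\to TS$ is a fibrewise linear isomorphism onto $\check{T}^\mathcal{F}S|_{O^\mathcal{F}_x}$. The delicate points are the ``no-escape'' claim that $N$ is genuinely $\mathcal{T}$-open in the orbit (rather than merely an immersed piece) and the verification that the resulting topology agrees with $\mathcal{T}$; both hinge on the flow-invariance of $\check{T}^\mathcal{F}S$ supplied by local completeness, exactly as in the Stefan--Sussmann theorem.
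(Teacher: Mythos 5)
The paper gives no proof of this theorem; it simply cites \S5 Theorem~3 of \'Sniatycki's article, whose argument is exactly the adaptation of Sussmann's orbit theorem that you have reconstructed --- local completeness yielding flow-invariance of $\check{T}^{\mathcal{F}}S$, constancy of $\delta_{\mathcal{F}}$ along orbits, the local integral manifolds $\Psi(T)=\exp(t_mX_m)\circ\cdots\circ\exp(t_1X_1)(x)$ built through local extensions in a chart, and the no-escape property giving $\mathcal{T}$-openness. Your outline is sound and correctly isolates the genuinely delicate steps (that $N$ is $\mathcal{T}$-open in the orbit and that $\mathcal{T}$ agrees with the manifold topology), so it matches the approach of the cited source.
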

\begin{proof}
See \S5 Theorem 3 of \cite{sniatycki}.
\end{proof}

\begin{remark}
This theorem generalises the corresponding ``orbit theorem'' in control theory (see, for example, \cite{jurdjevic}).
\end{remark}

\begin{example}
In \hypref{x:lcexample}{Example} the orbital tangent space has dimension $\dim(\widehat{T}^\mathcal{F}_{(0,y)}\RR^2)=1$ for all $y$, whereas $\widehat{T}^\mathcal{F}_{(x,y)}\RR^2=T_{(x,y)}\RR^2$ for $x\neq0$. But there is only one orbit: all of $\RR^2$.  So the family of vector fields given by the $\RR$-span of $\{\partial_x,x\partial_y\}$ does not satisfy the conclusion of \hypref{t:singfol}{Theorem}.  (Recall that this family is not locally complete.)
\end{example}

\begin{theorem}[Ordering on Orbit Partitions]\labell{t:singfolorder}
Orbits of any family of vector fields $\mathcal{F}$ are contained within orbits of $\Vect(S)$.
\end{theorem}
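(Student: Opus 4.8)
The plan is to unwind the definition of an orbit and observe that the asserted containment is essentially immediate, since every vector field belonging to $\mathcal{F}$ is in particular a vector field in $\Vect(S)$. The whole content is that the data witnessing membership in an $\mathcal{F}$-orbit can be reread verbatim as data witnessing membership in a $\Vect(S)$-orbit.

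First I would fix a point $x\in S$ and an arbitrary point $y\in O^\mathcal{F}_x$. By the definition of the orbit there exist vector fields $X_1,\ldots,X_k\in\mathcal{F}$ and real numbers $t_1,\ldots,t_k\in\RR$ with
$$y=\exp(t_1X_1)\circ\cdots\circ\exp(t_kX_k)(x).$$
Because $\mathcal{F}$ is by definition a subfamily of $\Vect(S)$, each $X_i$ is itself an element of $\Vect(S)$, so the very same expression exhibits $y$ as a point of the orbit $O_x=O^{\Vect(S)}_x$. As $y$ was arbitrary, this gives the inclusion $O^\mathcal{F}_x\subseteq O_x$.

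To conclude that $O^\mathcal{F}_x$ lies \emph{within a single} orbit of $\Vect(S)$, I would invoke that the orbits of $\Vect(S)$ partition $S$: indeed, $\Vect(S)$ is locally complete by \hypref{p:vectloccompl}{Proposition}, so \hypref{t:singfol}{Theorem} applies and $\mathcal{O}_{\Vect(S)}$ is a partition of $S$. Then $O_x$ is precisely the unique orbit of $\Vect(S)$ containing $x$, and the inclusion $O^\mathcal{F}_x\subseteq O_x$ says exactly that the $\mathcal{F}$-orbit through $x$ is contained within that single $\Vect(S)$-orbit. There is no genuine obstacle in this argument; the only point requiring care is the bookkeeping between the two orbit definitions, i.e.\ confirming that the witnessing tuple $(X_1,\ldots,X_k;t_1,\ldots,t_k)$ for $\mathcal{F}$ is admissible without modification as a witnessing tuple for $\Vect(S)$, which is clear since enlarging the family of allowed vector fields can only enlarge the set of reachable points.
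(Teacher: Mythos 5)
Your argument is correct. The paper does not supply its own proof of this statement---it simply cites \S5 Theorem~4 of \'Sniatycki's paper---but given the definitions used here, where a family of vector fields is by definition a subset of $\Vect(S)$ and an orbit is the set of points reachable by finite compositions of flows of members of the family, your direct unwinding (the same witnessing tuple $(X_1,\ldots,X_k;t_1,\ldots,t_k)$ works for the larger family, and the orbits of $\Vect(S)$ partition $S$) is exactly the intended argument and is complete.
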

\begin{proof}
See \S5 Theorem 4 of \cite{sniatycki}.
\end{proof}

\begin{theorem}[Stratification Induced by $\Vect(S)$]\labell{t:stratorb1}
Let $S$ be a smooth stratified space.  Then the orbits on $S$ induced by $\Vect(S)$ form a smooth decomposition of $S$.
\end{theorem}
\begin{proof}
See \S6 Theorem 8 of \cite{sniatycki}.
\end{proof}

\mute{

\begin{remark}
Note that it is not known whether the induced decomposition satisfies the ``local triviality'' condition of a stratified space.  However, it is known that this decomposition satisfies the Whitney A condition (see \cite{lusala-sniatycki11}).  We recall what this conditions is.  Let $N_1$ and $N_2$ be two submanifolds of $\RR^k$ such that $N_1$ is contained in the closure of $N_2$ in $\RR^k$, and let $(x_i)$ be a sequence of points in $N_2$ with limit $x\in N_1$.  Then the sequence of tangent spaces $T_{x_i}N_2$ converges to a linear subspace $L$ of $T_x\RR^k$.  We say that the pair $(N_1,N_2)$ satisfies the Whitney A condition if $L$ contains $T_xN_1$.
\end{remark}

}

\begin{theorem}[Orbits of Stratified Vector Fields]\labell{t:stratorb2}
Let $S$ be a smooth stratified space.  Then the orbits of $\Vect_{strat}(S)$ are exactly the strata of $S$.
\end{theorem}
\begin{proof}
See \S6 Theorem 12 of \cite{sniatycki}.
\end{proof}

\begin{theorem}[$\Vect(M/G)$ and the Orbit-Type Stratification]\labell{t:stratvect}
Given a compact Lie group $G$ acting on a connected manifold $M$, the strata of the orbit-type stratification on $M/G$ are precisely the orbits in $\mathcal{O}$ induced by $\Vect(M/G)$.
\end{theorem}

\begin{proof}
The proof can be found in \cite{sniatycki} and \cite{lusala-sniatycki}.  The idea is the following.  By \hypref{t:bierstone}{Theorem} the orbit-type stratification on $M/G$ is minimal.  The family of stratified vector fields of this stratification is locally complete by \hypref{p:stratloccompl}{Proposition} and its orbits are the strata by \hypref{t:stratorb2}{Theorem}. By \hypref{t:singfolorder}{Theorem}, these strata lie in orbits of $\Vect(M/G)$.  But, the set of orbits $\mathcal{O}$ induced by $\Vect(M/G)$ themselves form a stratification of $M/G$ by \hypref{t:stratorb1}{Theorem}.  So by minimality, we must have that these two stratifications are equal.
\end{proof}

\begin{proposition}\labell{p:jorbital}
Given a Hamiltonian action of a compact Lie group $G$ on a connected symplectic manifold $(M,\omega)$ with momentum map $\mu$, let $Z$ be the zero set of $\mu$.  The orbits of $\ham(Z/G)$ are the orbit-type strata of $Z/G$.
\end{proposition}

\begin{proof}
Sjamaar and Lerman showed in \cite{lerman-sjamaar} that the maximal integral curves of any Hamiltonian vector field on $Z/G$ is confined to a symplectic stratum.  Moreover, we can construct these vector fields so that their orbits are exactly the connected components of the orbit-type strata of $Z/G$.
\end{proof}

\begin{theorem}\labell{t:regvalue}
If $0\in\g^*$ is a regular value of the momentum map $\mu$, then the orbits induced by $\ham(Z/G)$ are exactly the orbits induced by $\Vect(Z/G)$, which gives a minimal stratification.
\end{theorem}

\begin{proof}
Assume that $0\in\g^*$ is a regular value of $\mu$. Then $Z$ is a $G$-manifold, and by \hypref{t:stratvect}{Theorem} the orbit-type stratification is minimal, and the strata are exactly the orbits induced by $\Vect(Z/G)$.  By \hypref{p:jorbital}{Proposition} the orbits of $\Vect(Z/G)$ and $\ham(Z/G)$ coincide.
\end{proof}

\begin{question}\labell{q:critvalue}
Does the above theorem hold in general?  That is, even if $0\in\g^*$ is a critical value of $\mu$?
\end{question} 
\section{Lie Algebras of Vector Fields}\labell{s:liealgebra}

Our goal for this section is to establish that for a locally compact subcartesian space $S$, $\Vect(S)$ is a Lie algebra under the commutator bracket.  For a subset $A\subseteq S$ we shall denote by $\mathfrak{n}(A)$ the set of functions $\{f\in\CIN(S)~|~f|_{A}=0\}.$  Recall that for a family $\mathcal{F}$ of vector fields on $S$ and $x\in S$, $\widehat{T}^\mathcal{F}_xS$ is the linear subspace of $T_xS$ spanned by all vectors $X|_x$ for $X\in\mathcal{F}$.

\begin{proposition}[Characterisation of Orbital Vectors]\labell{p:charTcheck}
Let $S$ be a subcartesian space and $\mathcal{F}$ a locally complete family of vector fields. Let $x\in S$ and $v\in T_xS$.  Then, $v\in\widehat{T}^\mathcal{F}_xS$ if and only if for every open neighbourhood $U\subseteq O^\mathcal{F}_x$ of $i^{-1}(x)$, where $i$ is the inclusion of $O^\mathcal{F}_x$ into $S$, we have $v(\mathfrak{n}(i(U)))=\{0\}$.
\end{proposition}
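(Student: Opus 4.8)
The plan is to reduce the statement to the classical characterisation of the tangent space of an embedded submanifold of $\RR^n$, using \hypref{t:singfol}{Theorem} to replace the orbital tangent space by the honest tangent space of the orbit manifold. Write $O:=O^\mathcal{F}_x$ and $p:=i^{-1}(x)$. By \hypref{t:singfol}{Theorem} the orbit $O$ is a smooth manifold, $i:O\to S$ is smooth, and $i_*:T_pO\to T_xS$ is a linear isomorphism onto $\check{T}^\mathcal{F}_xS$; in particular $\check{T}^\mathcal{F}_xS=i_*(T_pO)$ and $i$ is an immersion at $p$. Throughout I use that a derivation of $\CIN(S)$ at a point is local: if $f\in\CIN(S)$ vanishes on an open neighbourhood of $x$ then $v(f)=0$, which follows by multiplying $f$ by a bump function (available since subcartesian spaces admit partitions of unity).

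For the forward implication, suppose $v\in\check{T}^\mathcal{F}_xS$, so $v=i_*w$ for some $w\in T_pO$. Given an open $U\subseteq O$ containing $p$ and $f\in\mathfrak{n}(i(U))$, I compute $v(f)=(i_*w)(f)=w(f\circ i)$. Since $f$ vanishes on $i(U)$, the function $f\circ i$ vanishes on the open set $U\ni p$, so its germ at $p$ is zero and $w(f\circ i)=0$. Hence $v(\mathfrak{n}(i(U)))=\{0\}$ for every such $U$.

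The converse is the substantial direction. Fix a chart $\varphi:V\to\tilde V\subseteq\RR^n$ of $S$ about $x$. Since $i$ is an immersion at $p$ and $\varphi$ is a chart, $\varphi\circ i$ is an immersion at $p$, so the local immersion theorem lets me shrink to an open $W\subseteq i^{-1}(V)$ containing $p$ on which $\varphi\circ i$ is an embedding with image a $\delta_\mathcal{F}(x)$-dimensional embedded submanifold $N:=\varphi(i(W))\subseteq\tilde V$. The key step is to show that $\tilde v:=\varphi_*v$ annihilates $\mathfrak{n}(N)$, the ideal of functions in $\CIN(\RR^n)$ vanishing on $N$. Given $g\in\mathfrak{n}(N)$, the function $g\circ\varphi\in\CIN(V)$ vanishes on $i(W)$; multiplying by a bump function $b\in\CIN(S)$ that equals $1$ near $x$ and is supported in $V$, the function $f:=b\,(g\circ\varphi)$, extended by zero, lies in $\CIN(S)$ and vanishes on $i(W)$, i.e.\ $f\in\mathfrak{n}(i(W))$, so the hypothesis applied with $U=W$ together with locality gives $\tilde v(g)=v(g\circ\varphi)=v(f)=0$.

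To finish, I invoke the standard fact that for an embedded submanifold $N\subseteq\RR^n$ the tangent vectors at $\varphi(x)$ are precisely the derivations of $\CIN(\RR^n)$ at $\varphi(x)$ annihilating $\mathfrak{n}(N)$ (proved with submanifold-adapted coordinates and a cutoff). Hence $\tilde v\in T_{\varphi(x)}N=(\varphi\circ i)_*(T_pO)=\varphi_*(\check{T}^\mathcal{F}_xS)$, and since $\varphi_*$ is injective on $T_xS$ this gives $v\in\check{T}^\mathcal{F}_xS$. I expect the main obstacle to be this converse, specifically realising the orbit locally as an \emph{embedded} submanifold of $\RR^n$: because $O$ carries the finer topology $\mathcal{T}$, the map $i$ is only an immersion into $S$, so one must pass to a neighbourhood $W$ on which $\varphi\circ i$ genuinely embeds; the bump-function construction is then what transports the vanishing condition on $O$ across the chart into the submanifold statement in $\RR^n$.
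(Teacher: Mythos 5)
Your proposal is correct and follows essentially the same route as the paper: the forward direction via $v=i_*w$ and locality of derivations, and the converse by using Theorem~\ref{t:singfol} to see that $\varphi\circ i$ is an immersion, shrinking to a neighbourhood where the orbit becomes an embedded submanifold $N\subseteq\RR^n$, transporting the vanishing condition across the chart (your explicit bump-function construction is just an unpacking of the paper's appeal to Proposition~\ref{p:charTS}), and concluding with the classical characterisation of $T_{\varphi(x)}N$ as the annihilator of $\mathfrak{n}(N)$.
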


\begin{proof}
Let $v\in\widehat{T}^\mathcal{F}_xS$.  Then by \hypref{t:singfol}{Theorem} $v=i_*w$ for some $w\in TO^\mathcal{F}_x$.  For any open neighbourhood $U$ of $i^{-1}(x)$ and for any $f\in\mathfrak{n}(i(U))$, $$vf=w(i^*f)=0.$$
Conversely, let $v\in T_xS$ and let $\varphi:V\to\tilde{V}\subseteq\RR^n$ be a chart about $x$. Then, $\varphi(V\cap i(O^\mathcal{F}_x))$ is a differential subspace of $\RR^n$, and in fact since $\varphi\circ i|_{i^{-1}(V)}$ is smooth with $d(\varphi\circ i|_{i^{-1}(V)})$ one-to-one (by \hypref{t:singfol}{Theorem}), we have that $\varphi\circ i|_{i^{-1}(V)}$ is an immersion.  Hence by the rank theorem there exists an open neighbourhood $U\subseteq i^{-1}(V)$ of $i^{-1}(x)$ such that $\tilde{U}:=\varphi\circ i(U)$ is an embedded submanifold of $\RR^n$.\\

Now, $v$ has a unique extension to a vector $\tilde{v}=\varphi_*v\in T_x\RR^n$.  Suppose $vf=0$ for all $f\in \mathfrak{n}(i(U))$.  Then for each such $f$, by \hypref{p:charTS}{Proposition}, $\tilde{v}\tilde{f}=0$ for any local representative $\tilde{f}$ of $f$. But then, also by \hypref{p:charTS}{Proposition}, we have that $\tilde{v}$ is the unique local extension of a vector $\tilde{w}\in T_{\varphi(x)}\tilde{U}$ since $\tilde{f}|_{\tilde{U}}=0$. Since $\tilde{U}$ is an embedded submanifold, there exists a unique $w\in T_{i^{-1}(x)}U$ such that $(\varphi\circ i)_*w=\tilde{w}$.  Identify $\tilde{w}$ with $\tilde{v}$.  By \hypref{t:singfol}{Theorem} and uniqueness, $i_*w=v$.  Thus, $v\in\widehat{T}^\mathcal{F}_xS$.\\

Since any open neighbourhood $W$ of $i^{-1}(x)$ contains a smaller open neighbourhood $U\subseteq i^{-1}(V)\cap W$ in which $\varphi\circ i(U)$ is an embedded submanifold of $\RR^n$, and also $\mathfrak{n}(i(W))\subseteq\mathfrak{n}(i(U))$, we can apply the above argument, obtaining our result.
\end{proof}

\begin{proposition}[Characterisation of Vector Fields]\labell{p:charvect}
Let $S$ be a locally compact subcartesian space.  A derivation $X\in\Der\CIN(S)$ is a vector field if and only if for every $x\in S$ and every open neighbourhood $U$ of $i^{-1}(x)$, $$X(\mathfrak{n}(i(U)))\subseteq\mathfrak{n}(i(U)).$$
\end{proposition}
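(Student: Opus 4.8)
The plan is to read the stated condition as the assertion that $X$ is tangent to the orbits of $\mathcal{F}:=\Vect(S)$, which is locally complete by \hypref{p:vectloccompl}{Proposition}; throughout, $i:O_x\hookrightarrow S$ denotes the inclusion of the orbit $O_x$ as a smooth manifold via \hypref{t:singfol}{Theorem}. The forward direction is the routine one. Assume $X\in\Vect(S)$, and fix $x$, an open $U\subseteq O_x$ containing $i^{-1}(x)$, and $f\in\mathfrak{n}(i(U))$. For $y\in i(U)$ the maximal integral curve $t\mapsto\exp(tX)(y)$ stays in $O_y=O_x$ by definition of the orbit, and by \hypref{t:singfol}{Theorem} it corresponds to a smooth curve in the manifold $O_x$ through $i^{-1}(y)\in U$; since $U$ is open in $O_x$ this curve remains in $U$ for small $t$, so $f(\exp(tX)(y))\equiv 0$ near $t=0$, and differentiating gives $(Xf)(y)=0$. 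Hence $Xf\in\mathfrak{n}(i(U))$.

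For the converse, assume the ideal condition. Applying it with a $U$ containing the single point $i^{-1}(x)$ shows $X|_x(f)=(Xf)(x)=0$ for every $f\in\mathfrak{n}(i(U))$ and every such $U$; by \hypref{p:charTcheck}{Proposition} this gives $X|_x\in\check{T}_xS$ for all $x$, i.e.\ $X$ is tangent to every orbit. I then define, on each orbit manifold $O_x$, a derivation $\hat X$ by $\hat Xg:=i^*(X\tilde f)$ whenever $g=i^*\tilde f$ locally with $\tilde f\in\CIN(S)$; such local extensions exist because, by \hypref{t:singfol}{Theorem} together with the embedding argument in the proof of \hypref{p:charTcheck}{Proposition}, $i$ is locally an embedding. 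The ideal condition makes $\hat X$ well-defined: if $i^*\tilde f_1=i^*\tilde f_2$ on an open $U\subseteq O_x$, then $\tilde f_1-\tilde f_2\in\mathfrak{n}(i(U))$ forces $X(\tilde f_1-\tilde f_2)\in\mathfrak{n}(i(U))$, so $i^*(X\tilde f_1)=i^*(X\tilde f_2)$ on $U$. Thus $\hat X$ is a genuine smooth vector field on the manifold $O_x$.

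It then remains to show that every maximal integral curve $c:I^X_x\to S$ of $X$, which exists uniquely by \hypref{t:ode}{Theorem}, has open domain, so that $X\in\Vect(S)$ by \hypref{p:opendomain}{Proposition}. Suppose not; say $b=\sup I^X_x\in I^X_x$, and set $p=c(b)$. Let $\hat\beta$ be the integral curve of $\hat X$ on the manifold $O_p$ with $\hat\beta(0)=i^{-1}(p)$, defined on an open interval $(-\epsilon,\epsilon)$ by manifold ODE theory. Then $i\circ\hat\beta$ is an integral curve of $X$ through $p$, so by the uniqueness clause of \hypref{t:ode}{Theorem} it agrees with $s\mapsto c(b+s)$ on the common part of their domains, and gluing extends $c$ to an integral curve through $x$ defined on an interval strictly larger than $I^X_x$, contradicting maximality. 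The same argument rules out $\inf I^X_x\in I^X_x$, so $I^X_x$ is open.

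The main obstacle is the converse, and within it the fact that the orbit topology on $O_x$ is strictly finer than the subspace topology. Establishing that $\hat X$ is an honest \emph{smooth} vector field on the manifold $O_x$, so that classical ODE theory delivers an open interval of definition, rests on the local-embedding consequence of \hypref{t:singfol}{Theorem} and on careful use of the ideal condition for well-definedness; and the extension step must be justified through the uniqueness in \hypref{t:ode}{Theorem} rather than by any naive continuity or frontier argument, since orbits need not be closed in $S$.
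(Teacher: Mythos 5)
Your proof is correct and follows essentially the same route as the paper's: in both, the ideal condition is used to show that $X$ restricts to a genuine smooth vector field on the orbit manifolds of $\Vect(S)$, so that classical ODE theory on those manifolds forces every maximal integral curve of $X$ to have open domain, which suffices by \hypref{p:opendomain}{Proposition}. The only cosmetic difference is that you realise the restriction globally on the orbit by pulling back functions (with well-definedness coming directly from the ideal condition), whereas the paper restricts $X$ to a small embedded piece of the orbit via \hypref{p:charTcheck}{Proposition} and a chart before running the same endpoint-contradiction argument.
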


\begin{proof}
Let $X$ be a vector field.  Then for any $x\in S$ and any open neighbourhood $U$ of $i^{-1}(x)$, $X|_{i(U)}$ is a vector field on $i(U)$.  By \hypref{p:charTcheck}{Proposition} for any $f\in\mathfrak{n}(i(U))$, $$(Xf)|_{i(U)}=0.$$
Conversely, let $X$ be a derivation of $\CIN(S)$ satisfying the property that for any open neighbourhood $U$ of $i^{-1}(x)$, $X(\mathfrak{n}(i(U)))\subseteq\mathfrak{n}(i(U))$ for all orbits $O_x$ with inclusion $i:O_x\to S$.  By \hypref{p:opendomain}{Proposition}, it is enough to show that each maximal integral curve of $X$ has an open domain.\\

Assume otherwise: there exists a maximal integral curve $\exp(tX)(x)$ through a point $x\in S$ with a closed or half-closed domain $I^X_x$.  If $X|_x=0$, then $\exp(tX)(x)$ is a constant map, and its maximal integral curve has $\RR$ as its domain, which is open. So assume $X|_x\neq0$.  Let $a\in I^X_x$ be an endpoint of $I^X_x$ and let $y:=\exp(aX)(x)$.  Then for any open neighbourhood $U\subseteq O_y$ of $i^{-1}(y)$, $$(Xf)|_{i(U)}=0$$ for all $f\in \mathfrak{n}(i(U))$.  In particular, $X|_zf=0$ for all $f\in \mathfrak{n}(i(U))$ and all $z\in i(U)$.  By \hypref{p:charTcheck}{Proposition}, $X|_z\in\widehat{T}_zS$ for all $z\in i(U)$.  Note that since $X|_x\neq0$, we have that $X|_y\neq0$, and so there exists an open neighbourhood $V\subseteq i(U)$  of $y$ such that $X|_z\neq 0$ for all $z\in V$.\\

Since $X|_V$ is a smooth section of $TV\subseteq TS$, by \hypref{t:singfol}{Theorem} we have constructed a vector field $Y\in\Vect(V)$ such that $Y|_z=X|_z$.  But note that by \hypref{p:intcurve}{Proposition} these integral curves locally are restrictions of integral curves in $\RR^n$, and so we can apply the ODE theorem, and obtain that since $X|_V=Y$, we have $\exp(tX)(y)=\exp(tY)(y)$ for $t$ in some domain $I_y$.  But, shrinking $V$ if necessary so that it is an embedded submanifold of $S$ (which exists by the rank theorem), since $Y$ is a vector field on the manifold $V$, $I_y$ is open and contains $0$, whereas since $\exp(tX)(y)=\exp((t+a)(X))(x)$, by assumption $I_y$ has $0\in I_y$ as an endpoint.  This is a contradiction.  Thus, $I^X_x$ does not contain any endpoints, and hence is open.
\end{proof}

\begin{corollary}[$\Vect(S)$ is a Lie Algebra]\labell{c:charvect}
Let $S$ be a locally compact subcartesian space.  Then $\Vect(S)$ is a Lie subalgebra of $\Der\CIN(S)$ and is a $\CIN(S)$-module.
\end{corollary}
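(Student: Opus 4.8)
The plan is to reduce everything to the ideal-preservation criterion for vector fields established in \hypref{p:charvect}{Proposition}. Recall that $\Der\CIN(S)$ is already known to be a Lie algebra under the commutator bracket and is evidently a $\CIN(S)$-module, and that $\Vect(S)\subseteq\Der\CIN(S)$ by definition; so it remains only to check that $\Vect(S)$ is closed under the bracket and under multiplication by functions in $\CIN(S)$. Throughout, the orbits $O_x$ appearing in \hypref{p:charvect}{Proposition} are those of the full family $\Vect(S)$, which is locally complete by \hypref{p:vectloccompl}{Proposition}; hence these orbits, and the associated ideals $\mathfrak{n}(i(U))$, form a fixed collection that does not depend on the particular derivation being tested, so there is no circularity in the argument.

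First I would treat the module structure, which is the easier of the two. Fix $X\in\Vect(S)$ and $g\in\CIN(S)$. A direct check of Leibniz' rule shows $gX\in\Der\CIN(S)$. To apply \hypref{p:charvect}{Proposition}, fix an orbit $O_x$ with inclusion $i$, an open neighbourhood $U$ of $i^{-1}(x)$, and $f\in\mathfrak{n}(i(U))$. Then $(gX)f=g\cdot(Xf)$, and since $X$ is a vector field we have $Xf\in\mathfrak{n}(i(U))$ by \hypref{p:charvect}{Proposition}; as $\mathfrak{n}(i(U))$ is an ideal of $\CIN(S)$, the product $g\cdot(Xf)$ again lies in $\mathfrak{n}(i(U))$. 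Thus $gX$ preserves every such ideal and is therefore a vector field.

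Next I would handle the Lie bracket. Given $X,Y\in\Vect(S)$, their commutator $[X,Y]=XY-YX$ is a derivation since $\Der\CIN(S)$ is a Lie algebra. Again fix $O_x$, $i$, $U$, and $f\in\mathfrak{n}(i(U))$. Because $Y$ is a vector field, $Yf\in\mathfrak{n}(i(U))$; because $X$ is a vector field, $X(Yf)\in\mathfrak{n}(i(U))$, and symmetrically $Y(Xf)\in\mathfrak{n}(i(U))$. Hence $[X,Y]f=X(Yf)-Y(Xf)\in\mathfrak{n}(i(U))$, so $[X,Y]$ preserves the ideal and is a vector field by \hypref{p:charvect}{Proposition}. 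Combining the two parts yields the claim.

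Since \hypref{p:charvect}{Proposition} does all the heavy lifting, I do not expect a genuine obstacle here; the only point requiring care is the observation flagged above, namely that the orbits against which we test are those of $\Vect(S)$ itself and are fixed once and for all, which is precisely what keeps the criterion from being self-referential.
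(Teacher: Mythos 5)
Your proposal is correct and follows essentially the same route as the paper: both reduce the claim to the ideal-preservation criterion of \hypref{p:charvect}{Proposition} and verify that $gX$ and $[X,Y]$ carry each ideal $\mathfrak{n}(i(U))$ into itself. Your write-up is in fact slightly more careful than the paper's (which abbreviates $Xf\in\mathfrak{n}(i(U))$ as $Xf=0$), and the non-circularity remark, while implicit in the paper, is a reasonable point to make explicit.
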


\begin{proof}
Let $x\in S$, $X,Y\in\Vect(S)$, $U\subseteq O_x$ any open neighbourhood of $i^{-1}(x)$ and $f\in\mathfrak{n}(i(U))$ and $g\in\CIN(S)$.  Applying \hypref{p:charvect}{Proposition}, we have $(X+Y)f|_{i(U)}=Xf|_{i(U)}+Yf|_{i(U)}=0$, $(gX)f|_{i(U)}=0$ and $[X,Y](f)|_{i(U)}=X(Yf)|_{i(U)}-Y(Xf)|_{i(U)}=0$.  Thus, $X+Y$, $gX$ and $[X,Y]$ are vector fields.
\end{proof}

\begin{remark}
By the above corollary, for any $x\in S$ and any $v\in\widehat{T}_xS$, there is a vector field $X$ such that $X|_x=v$.  In other words, we did not need to take the linear span in the definition of $\widehat{T}_xS$.
\end{remark}

We again return to the situation of a Hamiltonian $G$-action on $(M,\omega)$.  We have shown that $\Vect(Z)$ is a Lie algebra.  Denote by $\Vect(Z)^G$ the Lie subalgebra of $G$-invariant vector fields on $Z$.

\begin{proposition}[Invariant Local Extensions for $\Vect(Z)^G$]\labell{p:vectZext}
Let $X\in\Vect(Z)^G$ and let $x\in Z\subseteq M$.  Then there exist a $G$-invariant open neighbourhood $U\subseteq M$ of $x$ and $\tilde{X}\in\Vect(M)^G$ such that $$X|_{U\cap Z}=\tilde{X}|_{U\cap Z}.$$
\end{proposition}

\begin{proof}
There exist an open neighbourhood $V\subseteq M$ of $x$ and $\tilde{X}_0\in\Vect(M)$ such that $\tilde{X}_0|_{V\cap Z}=X|_{V\cap Z}$.  Let $g_0=e\in G$ and let $g_i$ be elements of $G$ for $i=1,...,k$ such that $G\cdot x\subseteq M$ is covered by open sets $g_i\cdot V$.  Let $\{\zeta_i\}$ be a partition of unity subordinate to this cover, and define $$\tilde{X}:=\sum_{i=0}^k\zeta_i g_{i*}\tilde{X}_0.$$  Then, letting $W:=\bigcup_{i=0}^kg_i\cdot V$, we have that for any $y\in W\cap Z$
\begin{align*}
\tilde{X}|_y=&~\sum_{i=0}^k\zeta_i(y)g_{i*}(\tilde{X}_0|_{g_i^{-1}\cdot y})\\
=&~\sum_{i=0}^k\zeta_i(y)g_{i*}(X|_{g_i^{-1}\cdot y})\\
=&~\sum_{i=0}^k\zeta_i(y)X|_y\\
=&~X|_y.
\end{align*}
Thus, $\tilde{X}\in\Vect(M)$ is a local extension of $X$ on $W\cap Z$.  Averaging $\tilde{X}$ and letting $U$ be a $G$-invariant open neighbourhood of $G\cdot x$ contained in $W$, we are done.
\end{proof}

\begin{proposition}[$\Vect(Z)^G$ is a Lie Algebra]\labell{p:vectZloccompl}
$\Vect(Z)^G$ is a locally complete Lie subalgebra of $\Vect(Z)$.
\end{proposition}

\begin{proof}
Since diffeomorphisms commute with the commutator bracket, we have that $\Vect(Z)^G$ is a Lie subalgebra of $\Vect(Z)$.  For any two invariant vector fields $X$ and $Y$, we have for all $g\in G$ and $x\in Z$ $$g\cdot\exp(tX)(\exp(sY)(x))=\exp(tX)(\exp(sY)(g\cdot x))$$ for $s,t$ such that the composition of the curves is defined. Thus $\exp(tX)_*Y$ is locally defined about $G$-orbits.  Since $\Vect(Z)$ is locally complete, for any $x\in Z$ there exist a vector field $\Xi$ on $Z$ and an open neighbourhood $U$ of $x$ such that $\exp(tX)_*Y$ is defined on $U$ and $(\exp(tX)_*Y)|_U=\Xi|_U$.  Since $\exp(tX)_*Y$ is invariant about $x$, we can choose $U$ to be a $G$-invariant open neighbourhood.  Let $V\subset U$ be a $G$-invariant open neighbourhood of $x$ such that $\overline{V}\subset U$.  Let $b:M\to\RR$ be a $G$-invariant smooth bump function with support in $U$ and $b|_V=1$.  Then, $b\Xi\in\Vect(Z)^G$ extends $(\exp(tX)_*Y)|_V$ to a invariant vector field on $Z$.
\end{proof}

\begin{definition}
Let $\rho_Z:\g\to\Der\CIN(Z)$ be the $\g$-action induced by the action of $G$ on $Z$.  Note that by \hypref{p:opendomain}{Proposition}, $\rho_Z(\g)\subseteq\Vect(Z)$.  In fact, for any $\xi\in\g$, $\xi_Z:=\rho_Z(\xi)$ is just the restriction of $\xi_M$ to $Z$.
\end{definition}

\begin{proposition}[$\rho_Z(\g)$ is a Lie Algebra]\labell{p:rhoZloccompl}
$\rho_Z(\g)$ is a locally complete Lie subalgebra of $\Vect(Z)$.
\end{proposition}

\begin{proof}
Let $\xi,\zeta\in\g$, and let $\xi_Z=\rho_Z(\xi)$ and $\zeta_Z=\rho_Z(\zeta)$.  Then, $\exp(t\xi_Z)_*\zeta_Z=(\Ad_{\exp(t\xi)}\zeta)_Z$.  Thus $\rho_Z(\g)$ is locally complete, and since $\rho_Z$ is a Lie algebra homomorphism, its image is a Lie algebra.
\end{proof}

\begin{corollary}
$\rho_Z([\g,\g])$ and $\rho_Z(\mathfrak{z}(\g))$ are both locally complete Lie subalgebras of $\Vect(Z)$.
\end{corollary}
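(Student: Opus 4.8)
The plan is to mirror the argument used to prove \hypref{l:rhoZloccompl}{Lemma}, exploiting the fact that both summands in the decomposition $\g=[\g,\g]\oplus\mathfrak{z}(\g)$ are \emph{ideals} of $\g$, hence $\Ad$-invariant Lie subalgebras. Since $\rho_Z$ is a Lie algebra homomorphism, its restriction to any Lie subalgebra of $\g$ again has image a Lie subalgebra of $\Vect(Z)$; applied to $[\g,\g]$ and $\mathfrak{z}(\g)$ this immediately gives that $\rho_Z([\g,\g])$ and $\rho_Z(\mathfrak{z}(\g))$ are Lie subalgebras of $\Vect(Z)$. So the only substantive point to address is local completeness.

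For local completeness I would reuse the computation $\exp(t\xi_Z)_*\zeta_Z=(\Ad_{\exp(t\xi)}\zeta)_Z$ established in the proof of \hypref{l:rhoZloccompl}{Lemma}. Fix $\xi,\zeta\in[\g,\g]$. Because $[\g,\g]$ is an ideal, $\Ad_{\exp(t\xi)}\zeta\in[\g,\g]$ for every $t$, so $(\Ad_{\exp(t\xi)}\zeta)_Z\in\rho_Z([\g,\g])$. Thus $\exp(t\xi_Z)_*\zeta_Z$ lies in the family itself, and not merely locally, which is strictly more than local completeness demands. The same reasoning handles $\mathfrak{z}(\g)$; here one has the even simpler fact that $\Ad_{\exp(t\xi)}\zeta=\zeta$ whenever $\zeta$ is central, so $\exp(t\xi_Z)_*\zeta_Z=\zeta_Z$ is trivially in the family.

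The one step worth stating with any care is the $\Ad$-invariance of the two summands, i.e.\ that each $\Ad_{\exp(t\xi)}$ preserves $[\g,\g]$ and $\mathfrak{z}(\g)$. This is where the compactness of $G$ enters, via the splitting of $\g$ into its derived subalgebra and centre, but it is standard: both are characteristic ideals of $\g$, hence preserved by every Lie algebra automorphism, in particular by each $\Ad_g$. I do not anticipate any genuine obstacle; the corollary is essentially immediate from \hypref{l:rhoZloccompl}{Lemma} once this invariance is noted, exactly as in the analogous corollary for $M$ proved earlier in the paper.
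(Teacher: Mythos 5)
Your proof is correct and follows essentially the same route as the paper, which simply declares the corollary immediate from \hypref{l:rhoZloccompl}{Lemma}; you have usefully made explicit the point the paper leaves implicit, namely that $\Ad_{\exp(t\xi)}$ preserves each summand so that $\exp(t\xi_Z)_*\zeta_Z=(\Ad_{\exp(t\xi)}\zeta)_Z$ stays in the family. (A small remark: one does not even need that they are characteristic ideals --- for any Lie subalgebra $\mathfrak{h}\subseteq\g$ containing $\xi$, $\Ad_{\exp(t\xi)}=e^{t\,\ad_\xi}$ already preserves $\mathfrak{h}$, which is all the paper's ``they are themselves Lie algebras'' phrasing relies on.)
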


\begin{proof}
This is immediate from the above lemma.
\end{proof}

\begin{definition}\labell{d:AZ}
Define $\mathcal{A}_Z$ to be the smallest Lie subalgebra of $\Vect(Z)$ that contains both $\rho_Z(\g)$ and $\Vect(Z)^G$.
\end{definition}

\begin{proposition}\labell{p:AZ}
$\mathcal{A}_Z$ is locally complete and is equal to the direct sum of Lie subalgebras $$\mathcal{A}_Z=\rho_Z([\g,\g])\oplus\Vect(Z)^G.$$
\end{proposition}

\begin{proof}
By \hypref{p:vectZext}{Proposition}, for any $X\in\Vect(Z)^G$ and for any $x\in Z$, there exist a $G$-invariant open neighbourhood $U\subseteq M$ of $x$ and $\tilde{X}\in\Vect(M)^G$ such that $$X|_{U\cap Z}=\tilde{X}|_{U\cap Z}.$$  Hence, $$[\xi_Z,X]|_{U\cap Z}=[\xi_M,\tilde{X}]|_{U\cap Z}=0$$ by \hypref{p:A}{Proposition}.  Thus, applying \hypref{p:intcurve}{Proposition} and \hypref{e:flowscommute}{Equation} in the proof of \hypref{p:A}{Proposition}, we have that
\begin{equation}\labell{e:flowscommute2}
\exp(t\xi_Z)\circ\exp(sX)=\exp(sX)\circ\exp(t\xi_Z).
\end{equation}

Now, let $\xi\in\g$ and assume for all $g\in G$ and $x\in Z$, we have $$g_*(\xi_Z|_x)=\xi_Z|_{g\cdot x}.$$ Then, $$\frac{d}{dt}\Big|_{t=0}(g\cdot\exp(t\xi_Z)(x))=\frac{d}{dt}\Big|_{t=0}\exp(t\xi_Z)(g\cdot x).$$  The uniqueness property of $\exp$ implies that $$g\cdot\exp(t\xi_Z)(x)=\exp(t\xi_Z)(g\cdot x).$$  Hence  $(g\exp(t\xi))\cdot x=(\exp(t\xi)g)\cdot x$.  Since this is true for all $g\in G$, $\exp(t\xi)$ must be in the centre of $G$, and hence $\xi\in\mathfrak{z}(\g)$.  Thus, $$\rho_Z(\g)\cap\Vect(Z)^G=\rho_Z(\mathfrak{z}(\g)).$$  Since $\rho_Z$ is a Lie algebra homomorphism, from \hypref{e:flowscommute2}{Equation}: $\rho_Z(\g)=\rho_Z([\g,\g])\oplus\rho_Z(\mathfrak{z}(\g))$, and we obtain the direct sum structure of $\mathcal{A}_Z$.\\

To show local completeness, by \hypref{p:vectZloccompl}{Proposition} and \hypref{p:rhoZloccompl}{Proposition} it suffices to show that for any $\xi\in\g$ and $X\in\Vect(Z)^G$, $\exp(t\xi_Z)_*X\in\mathcal{A}_Z$ and $\exp(tX)_*\xi_Z\in\mathcal{A}_Z$.  The former is immediate since $X$ is invariant.  The latter follows from \hypref{e:flowscommute2}{Equation}:
\begin{align*}
\exp(tX)_*(\xi_Z|_x)=&~\frac{d}{ds}\Big|_{s=0}\exp(tX)(\exp(s\xi_Z)(x))\\
=&~\frac{d}{ds}\Big|_{s=0}\exp(s\xi_Z)(\exp(tX)(x))\\
=&~\xi_Z|_{\exp(tX)(x)}.
\end{align*}
\end{proof}

\begin{proposition}[$\ham(Z/G)$ is a Lie Algebra]
$\ham(Z/G)$ is a locally complete Lie subalgebra of $\Vect(Z/G)$.
\end{proposition}

\begin{proof}
For any $f,g,h\in\CIN(Z/G)$ and $a,b\in\RR$, $\{af+bg,h\}_{Z/G}=a\{f,h\}_{Z/G}+b\{g,h\}_{Z/G}$, and so $aX_f+bX_g=X_{af+bg}$.  Thus $\ham(Z/G)$ is a real vector space.  Next, the Jacobi identity for the Poisson bracket gives $$\{\{f,g\}_{Z/G},h\}_{Z/G}=-\{g,\{f,h\}_{Z/G}\}_{Z/G}+\{f,\{g,h\}_{Z/G}\}_{Z/G}.$$ This translates to $$X_{\{f,g\}_{Z/G}}h=X_fX_gh-X_gX_fh=[X_f,X_g]h.$$

To show local completeness, fix $f,g\in\CIN(Z/G)$ and let $X_f$ and $X_g$ be their corresponding Hamiltonian vector fields.  For sufficiently small $t$, we want to show that $\exp(tX_f)_*X_g$ is a Hamiltonian vector field.  Consider $X_{\exp(-tX_f)^*g}$.  For any $h\in\CIN(Z/G)$, we have
\begin{align*}
X_{\exp(-tX_f)^*g}h=&~\pois{\exp(-tX_f)^*g}{h}_{Z/G}\\
=&~\exp(-tX_f)^*\pois{g}{\exp(tX_f)^*h}\\
=&~\exp(-tX_f)^*(X_g(\exp(tX_f)^*h))\\
=&~(\exp(tX_f)_*X_g)(h).
\end{align*}
This completes the proof.
\end{proof} 
\section{Orbital Maps}\labell{s:orbitalmaps}

In general, a smooth map between subcartesian spaces does not lift to a map between the corresponding orbital tangent bundles.  This is illustrated in the following example.

\begin{example}
Let $S=\{(x,y)\in\RR^2~|~xy=0\}$, and let $\gamma:\RR\to S$ be a curve passing through $(0,0)\in S$ at time $t=0$ such that $$u:=\frac{d}{dt}\Big|_{t=0}\gamma(t)\neq0.$$  Then $u\notin\widehat{T}_{(0,0)}S$ since $\widehat{T}_{(0,0)}S=\{0\}$, but $\frac{d}{dt}\Big|_{t=0}\in\widehat{T}_0\RR=T_0\RR$.
\end{example}

To remedy this lack of the functoriality of $\widehat{T}$, we introduce a special kind of smooth map.

\begin{definition}[Orbital Maps]
Let $R$ and $S$ be subcartesian spaces and let $F:R\to S$ be a smooth map between them.  Let $\mathcal{F}$ and $\mathcal{G}$ be families of vector fields on $R$ and $S$, respectively.  $F$ is \emph{orbital} with respect to $\mathcal{F}$ and $\mathcal{G}$ if for any $x\in R$, $F(O^\mathcal{F}_x)\subseteq O^\mathcal{G}_{F(x)}$.  That is, for any $X\in\mathcal{F}$, $x\in R$, and $t\in I_x^X$, there exist $Y_1,...,Y_k\in\mathcal{G}$ and $t_1,...,t_k\in\RR$ such that
\begin{equation*}
F(\exp(tX)(x))=\exp(t_kY_k)\circ...\circ\exp(t_1Y_1)(F(x)).
\end{equation*}
If $\mathcal{F}=\Vect(R)$ and $\mathcal{G}=\Vect(S)$, then we simply call $F$ \emph{orbital}.
\end{definition}

\begin{proposition}[Charts, Smooth Functions, Diffeomorphisms]\labell{p:orb}
Charts, real-valued smooth functions, and diffeomorphisms between subcartesian spaces are orbital.
\end{proposition}

\begin{proof}
Since $\RR^k$ only has one orbit for each $k\geq0$, charts and smooth functions are trivially orbital.  Since a diffeomorphism $F:R\to S$ induces an isomorphism of Lie algebras $F_*:\Der\CIN(R)\to\Der\CIN(S)$, and hence $F(\exp(tX)(x))=\exp(tF_*X)(F(x))$ for all $X\in\Vect(R)$ and $x\in R$, we are done.
\end{proof}

\begin{proposition}[Orbital Pushforwards]\labell{p:Torbital}
Let $R$ and $S$ be subcartesian spaces, and let $F$ be an orbital map between them with respect to locally complete families of vector fields $\mathcal{F}$ on $R$ and $\mathcal{G}$ on $S$.  Then the restriction of the pushforward $F_*$ to $\widehat{T}^\mathcal{F}R$ has image in $\widehat{T}^\mathcal{G}S$.
\end{proposition}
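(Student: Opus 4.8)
The plan is to reduce the claim to the spanning vectors of the orbital tangent space and then to recognise the relevant pushforward as the velocity of a curve that is trapped inside an orbit. Since $F_*$ is fibrewise linear and, by definition, $\check{T}^\mathcal{F}_xR$ is spanned by the vectors $X|_x$ with $X\in\mathcal{F}$, it suffices to prove that $F_*(X|_x)\in\check{T}^\mathcal{G}_{F(x)}S$ for a single $X\in\mathcal{F}$ and a single $x\in R$; the general vector then follows by linearity of $F_*$ on fibres.

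Fix such an $X$ and $x$ and set $c(t):=F(\exp(tX)(x))$, a smooth curve in $S$ with $c(0)=F(x)$. Since $X|_x=\frac{d}{dt}\big|_{t=0}\exp(tX)(x)$ and $F_*$ is the pushforward, the chain rule gives $F_*(X|_x)=\frac{d}{dt}\big|_{t=0}c(t)=:c'(0)$, so the goal becomes $c'(0)\in\check{T}^\mathcal{G}_{F(x)}S$. The orbital hypothesis \hypref{e:orbital}{Equation} supplies $Y_1,\dots,Y_k\in\mathcal{G}$ and, for each small $t$, parameters $t_1(t),\dots,t_k(t)$ in fixed intervals with $c(t)=\exp(t_1Y_1)\circ\cdots\circ\exp(t_kY_k)(F(x))$; in particular the image of $c$ near $0$ lies inside the orbit $O^\mathcal{G}_{F(x)}$.

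To convert ``image in the orbit'' into ``velocity in the orbital tangent space'' I would use the characterisation of \hypref{p:charTcheck}{Proposition}, working in the locally complete setting in which these notions are used, so that \hypref{t:singfol}{Theorem} equips $O^\mathcal{G}_{F(x)}$ with its manifold structure and $i_*$ identifies its tangent spaces with $\check{T}^\mathcal{G}S$ along the orbit. By that proposition, $c'(0)\in\check{T}^\mathcal{G}_{F(x)}S$ if and only if $c'(0)$ annihilates $\mathfrak{n}(i(U))$ for every open neighbourhood $U$ of $i^{-1}(F(x))$ in $O^\mathcal{G}_{F(x)}$, where $i$ is the inclusion of the orbit. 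For $g\in\mathfrak{n}(i(U))$ one has $c'(0)(g)=(X|_x)(F^*g)=\frac{d}{dt}\big|_{t=0}g(c(t))$, which vanishes as soon as $c(t)\in i(U)$ for all $t$ near $0$. Thus the whole statement reduces to showing that $c$ is continuous at $0$ into the orbit topology $\mathcal{T}$, equivalently that $F$ carries a sufficiently short piece of the integral curve of $X$ into the prescribed orbit-neighbourhood of $F(x)$.

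This last point is where I expect the real difficulty to lie. The orbital decomposition only asserts the \emph{existence} of parameters $t_i(t)$ in fixed intervals, with no a priori continuity or smoothness in $t$, so one cannot simply precompose the (continuous) parametrisation $T\mapsto\exp(t_1Y_1)\circ\cdots\circ\exp(t_kY_k)(F(x))$ with $t\mapsto(t_1(t),\dots,t_k(t))$ and read off continuity of $c$ into $\mathcal{T}$. I would resolve this through the weak-embedding (initial submanifold) property of orbits underlying \hypref{t:singfol}{Theorem}: in a chart $\varphi$ about $F(x)$, restrict to a maximal linearly independent subfamily of the $Y_j|_{F(x)}$, whose associated parametrisation is an immersion and hence, by the rank theorem, an embedding onto a submanifold $\Sigma$ of $S$ near $F(x)$ with $T_{F(x)}\Sigma\subseteq\check{T}^\mathcal{G}_{F(x)}S$, exactly as in the proof of \hypref{p:charTcheck}{Proposition}. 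One then argues that $c$, being a smooth $S$-curve confined to the orbit, is smooth into this embedded piece, so that $c'(0)\in T_{F(x)}\Sigma\subseteq\check{T}^\mathcal{G}_{F(x)}S$; \hypref{p:intcurve}{Proposition} is used to compare the integral curves of $X$ and of the $Y_j$ with their Euclidean images in the charts. Establishing rigorously that the $k$-parameter image collapses locally onto such an embedded $\Sigma$ through which $c$ genuinely factors is the crux, and is precisely the weak-embeddedness of orbits that must be invoked or proven.
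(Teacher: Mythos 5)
Your route is genuinely different from the paper's, and more careful. The paper's entire proof is one line: differentiate the orbital identity \hypref{e:orbital}{Equation} in $t$ at $t=0$ and read off $F_*(X|_x)=Y|_{F(x)}$ for some $Y$ obtained from the $Y_i$ --- in other words, it treats the reparametrisation $t\mapsto(t_1(t),\dots,t_k(t))$ as if it were differentiable and applies the chain rule. You correctly observe that the definition of an orbital map only asserts the \emph{existence} of these parameters, with no continuity or smoothness in $t$, so the naive differentiation is not justified as stated; your detour through \hypref{p:charTcheck}{Proposition} --- reducing the claim to the vanishing of $c'(0)$ on $\mathfrak{n}(i(U))$ for $\mathcal{T}$-open neighbourhoods $U$ of $i^{-1}(F(x))$ --- is exactly the right way to avoid ever differentiating the $t_i$. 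Your preliminary reductions (to a single spanning vector $X|_x$ by fibrewise linearity, and to the curve $c(t)=F(\exp(tX)(x))$ with $F_*(X|_x)=c'(0)$) are fine and are implicit in the paper as well.

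That said, as you yourself acknowledge, the proposal does not close the step it identifies as the crux: that the continuous $S$-curve $c$, whose image lies in the \emph{set} $O^{\mathcal{G}}_{F(x)}$, actually lands in $i(U)$ for every $\mathcal{T}$-neighbourhood $U$ of $i^{-1}(F(x))$ once $t$ is small. This is a genuine issue, not a formality: $\mathcal{T}$ can be strictly finer than the subspace topology (the irrational-flow example in the paper), and for a fixed $U$ the parameters supplied by \hypref{e:orbital}{Equation} are only confined to the fixed intervals $(-\epsilon_i,\epsilon_i)$, which need not map into $i(U)$. So as written the argument is incomplete. To finish it along your lines, argue as in the converse direction of \hypref{p:charTcheck}{Proposition}: choose a chart $\varphi:V\to\tilde{V}\subseteq\RR^n$ about $F(x)$ and a $\mathcal{T}$-neighbourhood $U$ with $\varphi\circ i(U)$ an embedded submanifold of $\RR^n$; by second countability of the orbit in the topology $\mathcal{T}$, $O^{\mathcal{G}}_{F(x)}\cap V$ is a countable union of such plaques, each relatively closed in $O^{\mathcal{G}}_{F(x)}\cap V$ with the subspace topology, so a Sierpi\'nski connectedness argument applied to the interval $c^{-1}(V)$ forces $c$ to stay in the plaque through $F(x)$; then $g\circ c\equiv0$ near $0$ for every $g\in\mathfrak{n}(i(U))$, hence $c'(0)(g)=0$, and since the annihilation for arbitrary $\mathcal{T}$-neighbourhoods reduces to this case exactly as in \hypref{p:charTcheck}{Proposition}, you conclude $c'(0)\in\check{T}^{\mathcal{G}}_{F(x)}S$. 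With that weak-embeddedness step supplied, your proof is complete and, unlike the paper's, does not rest on an unjustified differentiation of the parameters.
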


\begin{proof}
This is immediate from \hypref{t:singfol}{Theorem} and the definition of an orbital map.
\end{proof}

\begin{remark}
Subcartesian spaces equipped with locally complete families of vector fields, along with orbital maps with respect to these families, form a category.  We will call the objects of this category \emph{orbital subcartesian spaces}.
\end{remark}

\mute{
Recall the $D$-topology on a subcartesian space induced by a family of vector fields, as described in \hypref{r:vectdiffeol}{Remark}.

\begin{proposition}
Let $R$ and $S$ be subcartesian spaces, and let $\mathcal{F}$ and $\mathcal{G}$ be locally complete families of vector fields on each, respectively.  Then a smooth map $F:R\to S$ is orbital if and only if it is diffeologically smooth with respect to the diffeologies $\mathcal{D}_\mathcal{F}$ and $\mathcal{D}_\mathcal{G}$.
\end{proposition}

\begin{proof}
Assume that $F$ is orbital with respect to $\mathcal{F}$ and $\mathcal{G}$ and fix a plot $p\in\mathcal{D}_\mathcal{F}$.  We wish to show that $F\circ p\in\mathcal{D}_\mathcal{G}$.  Let $U$ be the domain of $p$, and fix $u\in U$.  By definition of $\mathcal{D}_\mathcal{F}$ there exist an open neighbourhood $V\subseteq U$ of $u$, vector fields $X_1,...,X_k\in\mathcal{F}$, $x\in R$, and a smooth map $f:V\to U_x(\xi)$ where $\xi=(X_1,...,X_k)$ such that $$p|_V(v)=\xi_{f(v)}(x).$$ It is enough to show that $v\mapsto F(\xi_{f(v)}(x))$ is smooth.  Let $T=(t_1,...,t_k)=f(v)$.  Since $F$ is orbital, there exist $Y_1,...,Y_l\in\mathcal{G}$ and $s_1,...,s_l\in\RR$ such that
\begin{align*}
F(\xi_T(x))=&~F(\exp(t_kX_k)\circ...\circ\exp(t_1X_1)(x))\\
=&~\exp(s_lY_l)\circ...\circ\exp(s_1Y_1)\circ{F(x)}\\
=&~\zeta_{T'}(F(x))
\end{align*}
where $\zeta=(Y_1,...,Y_l)$ and $T'=(s_1,...,s_l)$. Thus, locally about $u$, $F\circ p$ is equal to $\zeta_{f(v)}(F(x))$, and so is a plot in $\mathcal{D}_\mathcal{G}$.\\

Now assume that $F$ is diffeologically smooth with respect to the diffeologies $\mathcal{D}_\mathcal{F}$ and $\mathcal{D}_\mathcal{G}$.  Fix $x\in R$.  We want to show that $F(O_x^{\mathcal{F}})\subseteq O^{\mathcal{G}}_{F(x)}$.  Fixing $X\in\mathcal{F}$, if $I^X_x$ is the domain of $\exp(\cdot X)(x)$, then it is enough to show that for any $t\in I^X_x$, $F(\exp(tX)(x))\in O^\mathcal{G}_{F(x)}$.  Fix $t$.  If we let $\xi=X$ and $T=t\in I^X_x$, then $\exp(tX)(x)=\xi_T(x)$, and $T\mapsto\xi_T(x)$ is a plot in $\mathcal{D}_\mathcal{F}$.  Thus, $t\mapsto F(\exp(tX)(x))$ is a plot in $\mathcal{D}_\mathcal{G}$.  Thus, for $\tau$ near $t$, we have that $\tau\mapsto F(\exp(\tau X)(x))$ is equal to $\tau\mapsto\zeta_\tau(F(x))$ for some $\zeta=(Y_1,...,Y_k)$ where $Y_1,...,Y_k\in\mathcal{G}$.  But the image of this plot is thus contained in the same orbit as $F(x)$, and we are done.
\end{proof}

}

\begin{proposition}
Let $R$ and $S$ be smooth stratified spaces.  Then a smooth map $F:R\to S$ is stratified if and only if it is orbital with respect to $\Vect_{strat}(R)$ and $\Vect_{strat}(S)$.
\end{proposition}

\begin{proof}
This is immediate from \hypref{t:stratorb2}{Theorem}.
\end{proof}

\begin{corollary}\labell{c:orbitalcat}
The category of smooth stratified spaces, along with smooth stratified maps, forms a full subcategory of orbital subcartesian spaces.
\end{corollary}

The following theorem is a result of Schwarz; see \cite{schwarz2} and \cite{schwarz3} (\cite{schwarz3} Chapter 1 Theorem 4.3 for full details).  Let $D$ be the Lie subgroup of $\Diff(M)^G$ consisting of $G$-equivariant diffeomorphisms of $M$ that act trivially on $\CIN(M)^G$ (that is, they send each $G$-orbit to itself), and let $\mathfrak{d}$ denote the Lie algebra of $D$.

\begin{notation}
For brevity, we will often use the notation $\mathcal{V}:=\Vect(M)^G$ in the future.
\end{notation}

\begin{theorem}[Schwarz]\labell{t:vectquot}
The following is a split short exact sequence.
\begin{equation}\labell{e:ses}
\xymatrix{
0 \ar[r] & \mathfrak{d} \ar[r] & \Vect(M)^G \ar[r]^{\pi_*} & \Vect(M/G) \ar[r] & 0
}
\end{equation}
\end{theorem}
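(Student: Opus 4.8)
The plan is to verify exactness at each of the three spots and then exhibit a section, treating the sequence as one of $\CIN(M)^G$-modules; the section I construct will be $\CIN(M)^G$-linear but need not respect brackets, which is all that ``split'' should mean here.

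First I would check that $\pi_*$ genuinely lands in $\Vect(M/G)$ and not merely in $\Der\CIN(M/G)$. For $X\in\Vect(M)^G$ the local flow $\exp(tX)$ is $G$-equivariant, hence descends through $\pi$ to a continuous map on $M/G$; using the isomorphism $\pi^*\colon\CIN(M/G)\to\CIN(M)^G$ of \hypref{t:quotsubc}{Theorem} one sees this descended map is smooth and satisfies the local-flow identities, so it is the flow of the derivation $\pi_*X$, which is therefore a vector field. Exactness at $\mathfrak{d}$ is the injectivity of the inclusion, which is immediate. Exactness at $\Vect(M)^G$ is essentially definitional: since $D$ consists of equivariant diffeomorphisms acting trivially on $\CIN(M)^G$, its Lie algebra is $\mathfrak{d}=\{X\in\Vect(M)^G : Xf=0 \text{ for all } f\in\CIN(M)^G\}$, and because $\pi^*$ is an isomorphism, $\pi_*X=0$ if and only if $X(\pi^*g)=0$ for every $g\in\CIN(M/G)$, i.e. $X$ kills every invariant function; hence $\ker\pi_*=\mathfrak{d}$.

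The crux is exactness at $\Vect(M/G)$, that is, surjectivity of $\pi_*$, and I expect this to be the main obstacle — it is precisely Schwarz's lifting theorem. I would reduce it to a local statement: by the Koszul slice theorem together with the Mostow--Palais equivariant embedding (both already used in the proof of \hypref{t:quotsubc}{Theorem}) one may, near any orbit, replace $M$ by a linear representation of $G$ on $\RR^N$. There the task becomes showing that every derivation of $\CIN(\RR^N)^G$ lifts to a $G$-invariant vector field on $\RR^N$, which is the substance of Schwarz's result (built on $\CIN(\RR^N)^G=p^*\CIN(\RR^k)$ for a Hilbert basis $p=(p_1,\dots,p_k)$ and his structure theorem for invariant vector fields). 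Given a target $Y\in\Vect(M/G)$ I would regard it as a derivation of $\CIN(M)^G$, lift it on each slice chart to obtain $X_\alpha\in\Vect(U_\alpha)^G$ with $\pi_*X_\alpha=Y$ there, and patch with a $G$-invariant partition of unity $\{\rho_\alpha\}$: the field $X=\sum_\alpha\rho_\alpha X_\alpha$ is $G$-invariant and satisfies $\pi_*X=Y$, since $\sum_\alpha\rho_\alpha=1$ and each $X_\alpha$ pushes to $Y$ on its chart.

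Finally, to split the sequence I would fix a $G$-invariant Riemannian metric on $M$ (by averaging over $G$) and decompose $T_xM$ into the orbit direction $T_x(G\cdot x)$ and its orthogonal complement. For $Y\in\Vect(M/G)$, choose any lift $X$ as above and take its orthogonal-to-the-orbit component $X^{\mathcal{H}}$; the difference $X-X^{\mathcal{H}}$ is $G$-invariant and tangent to every orbit, hence lies in $\mathfrak{d}=\ker\pi_*$, so $\pi_*X^{\mathcal{H}}=Y$, and independence of the chosen lift makes $Y\mapsto X^{\mathcal{H}}$ a well-defined $\CIN(M)^G$-linear section. The one technical point to dispatch is that, although the orbit distribution has jumping rank across the singular strata, the orthogonal projection of an \emph{invariant} vector field remains smooth; this I would check in the slice model, where invariant vector fields decompose compatibly with the metric splitting. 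Apart from invoking Schwarz's lifting theorem, these are the only steps, and the surjectivity is where essentially all of the difficulty resides.
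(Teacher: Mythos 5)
The paper itself offers no proof of this theorem: it is quoted directly from Schwarz (\cite{schwarz77}, \cite{schwarz80}), the only added content being the remark that Schwarz's target, the stratified vector fields on $M/G$, coincides with $\Vect(M/G)$ by \hypref{t:stratvect}{Theorem}. Your treatment of exactness at the first two spots and of surjectivity --- reduction to a linear $G$-representation via the slice theorem and Mostow--Palais, invocation of Schwarz's lifting theorem there, and patching with a $G$-invariant partition of unity --- is sound and is essentially how the cited argument goes, so up to that point you are reconstructing Schwarz rather than diverging from the paper.

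The splitting is where the proposal breaks: the ``one technical point'' you defer is false. The orthogonal projection of a $G$-invariant vector field onto the orbit directions (equivalently, onto their orthogonal complement) with respect to an invariant metric need not be smooth across strata. Concretely, let $G=S^1$ act diagonally on $V=\CC^2$ with the flat invariant metric, and let $X(v,w)=(iv,0)$, the generator of the circle action on the first factor alone; this is a $G$-invariant linear vector field with $\pi_*X\neq 0$ (it moves diagonal orbits). The tangent space to the orbit through $(v,w)\neq(0,0)$ is spanned by $(iv,iw)$, so the pointwise orthogonal projection is $X^{\mathcal{V}}=\tfrac{|v|^2}{|v|^2+|w|^2}(iv,iw)$. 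This vanishes identically on the axis $v=0$, equals $(iv,0)$ on the axis $w=0$, and equals $(it/2,\,it/2)$ along $v=w=t$; it is homogeneous of degree one but not linear, hence not differentiable at the origin. Therefore $X^{\mathcal{H}}=X-X^{\mathcal{V}}$ is not smooth, and your proposed section $Y\mapsto X^{\mathcal{H}}$ does not land in $\Vect(M)^G$; since, as you correctly observe, $X^{\mathcal{H}}$ is independent of the chosen lift of $Y$, no other choice of lift can repair this. The existence of a $\CIN(M)^G$-linear splitting is genuinely part of Schwarz's theorem and rests on his structure theory for the finitely generated module of invariant vector fields on a representation (local sections can then be glued by an invariant partition of unity exactly as in your surjectivity step); it cannot be manufactured by pointwise orthogonal projection. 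Your identification of $\ker\pi_*$ with $\mathfrak{d}$ is fine, since an invariant vector field killing all invariant functions is tangent to each orbit.
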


\begin{remark}
Actually, Schwarz showed that $\pi_*$ mapped $\Vect(M)^G$ onto stratified vector fields of $M/G$ with its orbit-type stratification. But by \hypref{t:stratvect}{Theorem}, this family of vector fields is exactly $\Vect(M/G)$.
\end{remark}

\begin{remark}
Since diffeomorphisms in $D$ keep $G$-orbits invariant, we have $\widehat{T}^{\mathfrak{d}}M\subseteq\widehat{T}^{\rho(\g)}M$. In fact, if $G$ is abelian then we have $\rho(\g)\subset\mathfrak{d}$, and so we obtain $$\widehat{T}^{\mathfrak{d}}M=\widehat{T}^{\rho(\g)}M.$$  However, in the non-abelian case, $\widehat{T}^{\mathfrak{d}}M$ may be a strict subset of $\widehat{T}^{\rho(\g)}M$.  For example, consider $\SO(3)$ acting by rotations on $\RR^3$.  For any nonzero $x\in\RR^3$ and any nonzero $\xi\in\mathfrak{so}(3)$, $\xi_{\RR^3}|_x$ is tangent to the $\SO(3)$-orbit through $x$, but this vector is not in the image of any invariant vector field.  For if it was, then the stabiliser at $x$ would fix the vector, and this is not the case.
\end{remark}

\begin{corollary}\labell{c:vectquot}
The image of $\pi_*$ restricted to $\widehat{T}^\mathcal{A}M$ is $\widehat{T}(M/G)$.
\end{corollary}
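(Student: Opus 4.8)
The plan is to prove the two inclusions separately, exploiting the direct-sum decomposition $\mathcal{A}=\rho([\g,\g])\oplus\Vect(M)^G$ from \hypref{l:A}{Lemma}. Since $\check{T}^\mathcal{A}_xM$ is by definition the linear span of the values $X|_x$ as $X$ ranges over $\mathcal{A}$, and since every $X\in\mathcal{A}$ splits as $X=X_1+X_2$ with $X_1\in\rho([\g,\g])$ and $X_2\in\Vect(M)^G$, I would first record that $\check{T}^\mathcal{A}_xM=\check{T}^{\rho([\g,\g])}_xM+\check{T}^{\Vect(M)^G}_xM$ at every $x\in M$, so that $\pi_*$ can be analysed on each summand.

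The key observation is that $\pi_*$ annihilates the $\rho([\g,\g])$-part. For $\xi\in\g$ and $f\in\CIN(M/G)$ one computes $\pi_*(\xi_M|_x)(f)=\xi_M|_x(\pi^*f)=\frac{d}{dt}\big|_{t=0}f(\pi(\exp(t\xi)\cdot x))=0$, because $\exp(t\xi)\cdot x$ stays in the $G$-orbit of $x$ and $\pi$ collapses each $G$-orbit to a point. Hence $\pi_*(\check{T}^{\rho(\g)}_xM)=\{0\}$, and in particular $\pi_*$ kills $\check{T}^{\rho([\g,\g])}_xM$.

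It then remains to show that $\pi_*$ carries $\check{T}^{\Vect(M)^G}_xM$ exactly onto $\check{T}_{\pi(x)}(M/G)$. The bridge is the compatibility $\pi_*(X|_x)=(\pi_*X)|_{\pi(x)}$ for $X\in\Vect(M)^G$, which follows from the defining relation $\pi^*((\pi_*X)f)=X(\pi^*f)$ upon evaluating at $x$. Granting this, the containment $\subseteq$ is immediate since $(\pi_*X)|_{\pi(x)}$ is a value of the vector field $\pi_*X\in\Vect(M/G)$ and hence lies in $\check{T}_{\pi(x)}(M/G)$; the reverse containment uses the surjectivity of $\pi_*:\Vect(M)^G\to\Vect(M/G)$ from Schwarz's \hypref{t:vectquot}{Theorem}, as any generator $Y|_{\pi(x)}$ of $\check{T}_{\pi(x)}(M/G)$ with $Y\in\Vect(M/G)$ can be written $(\pi_*X)|_{\pi(x)}=\pi_*(X|_x)$ for some $X\in\Vect(M)^G\subseteq\mathcal{A}$. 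Combining this with the previous paragraph gives $\pi_*(\check{T}^\mathcal{A}_xM)=\check{T}_{\pi(x)}(M/G)$ at every $x$, and since $\pi$ is surjective this yields $\pi_*(\check{T}^\mathcal{A}M)=\check{T}(M/G)$.

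I do not expect a serious obstacle: the statement is essentially an assembly of \hypref{l:A}{Lemma} and Schwarz's \hypref{t:vectquot}{Theorem}. The two points deserving care are the identity $\pi_*(X|_x)=(\pi_*X)|_{\pi(x)}$ — that the fibrewise pushforward of a $G$-invariant vector field agrees with evaluating its descended vector field — and the fact that $\check{T}^\mathcal{A}_xM$ genuinely decomposes as the sum of the two orbital tangent spaces rather than something larger. Both are routine once the definitions are unwound, so the argument is short.
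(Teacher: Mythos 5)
Your proposal is correct and follows essentially the same route as the paper's proof: kill the $\rho(\g)$-part because $\pi$ collapses $G$-orbits, then use the surjectivity of $\pi_*:\Vect(M)^G\to\Vect(M/G)$ from Schwarz's theorem for both inclusions on the $\Vect(M)^G$-part. The only difference is that you spell out the decomposition of $\check{T}^\mathcal{A}_xM$ and the compatibility $\pi_*(X|_x)=(\pi_*X)|_{\pi(x)}$ explicitly, which the paper leaves implicit.
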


\begin{proof}
$\pi_*$ will map any vector in $\widehat{T}^{\rho(\g)}M$ to 0, and so it is enough to consider vectors in $\widehat{T}^\mathcal{V}M$ (where we set $\mathcal{V}:=\Vect(M)^G$ for brevity).  Let $x\in M$ and $v\in\widehat{T}_x^\mathcal{V}M$.  Then, there exists a invariant vector field $X\in\mathcal{V}$ such that $X|_x=v$.  By \hypref{t:vectquot}{Theorem} there exists $Y\in\Vect(M/G)$ such that $Y|_{\pi(x)}=\pi_*(X|_x)$.\\

Now, let $w\in\widehat{T}_{\pi(x)}(M/G)$.  There exists a vector field $Y\in\Vect(M/G)$ such that $Y|_{\pi(x)}=w$.   Again by \hypref{t:vectquot}{Theorem} there is a vector field $X\in\mathcal{V}$ such that $\pi_*X=Y$, and so $\pi_*(X|_x)=w$.
\end{proof}

\begin{corollary}\labell{c:piorbital}
$\pi$ is orbital with respect to $\mathcal{A}$ and $\Vect(M/G)$.
\end{corollary}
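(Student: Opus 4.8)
The plan is to verify the defining property of orbitality directly: given any $X\in\mathcal{A}$ and any $x\in M$, I must exhibit vector fields $Y_1,\dots,Y_k\in\Vect(M/G)$ and parameters so that $\pi(\exp(tX)(x))$ is a composition of their flows through $\pi(x)$, as in \hypref{e:orbital}{Equation}. The key structural input is \hypref{l:A}{Lemma}, which lets me write $X=\zeta_M+V$ uniquely with $\zeta\in[\g,\g]$ and $V\in\Vect(M)^G$. The strategy is to flow along the two summands separately, exploiting that they commute, and to observe that the summand coming from the $G$-action becomes invisible after applying $\pi$.

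First I would recall from the proof of \hypref{l:A}{Lemma} that $[\zeta_M,V]=0$, since $V$ is left-invariant. As $M$ is a manifold and $\zeta_M,V$ are genuine smooth vector fields with vanishing bracket, their flows commute, and hence the flow of the sum factors: for $t$ in a neighbourhood of $0$ one has $\exp(tX)(x)=\exp(t\zeta_M)(\exp(tV)(x))$ by standard manifold ODE theory. I would then push this forward by $\pi$. Because $\exp(t\zeta_M)(y)=\exp(t\zeta)\cdot y$ lies in the $G$-orbit of $y$ and $\pi$ is constant on $G$-orbits, the outer flow disappears, giving $\pi(\exp(tX)(x))=\pi(\exp(tV)(x))$.

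It then remains to identify the latter curve as the flow of a single vector field on $M/G$. By Schwarz's \hypref{t:vectquot}{Theorem}, $Y:=\pi_*V\in\Vect(M/G)$ is well-defined, and $V$ is $\pi$-related to $Y$ in the sense that $\pi^*(Yf)=V(\pi^*f)$ for all $f\in\CIN(M/G)$. Consequently $t\mapsto\pi(\exp(tV)(x))$ is an integral curve of $Y$ through $\pi(x)$, so by the uniqueness in the ODE \hypref{t:ode}{Theorem} it equals $\exp(tY)(\pi(x))$. Combining the steps yields $\pi(\exp(tX)(x))=\exp(tY)(\pi(x))$, which is exactly \hypref{e:orbital}{Equation} with $k=1$, $Y_1=\pi_*V$, and $t_1=t$; since $x$ and $X$ were arbitrary, $\pi$ is orbital with respect to $\mathcal{A}$ and $\Vect(M/G)$.

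The routine bookkeeping will be the choice of flow domains: I must fix $\epsilon>0$ small enough that $\exp(tX)(x)$, $\exp(tV)(x)$, and $\exp(tY)(\pi(x))$ are all defined and the factorisation holds for $|t|<\epsilon$, and take $\epsilon_1=\epsilon$. The one step I expect to require genuine care, rather than citation, is the descent of the flow in the third paragraph, namely confirming that a $G$-invariant vector field and its Schwarz pushforward are $\pi$-related so that uniqueness of integral curves may be invoked; everything else reduces to the commuting-flows identity and the $G$-invariance of $\pi$.
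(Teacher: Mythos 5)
Your proposal is correct and follows essentially the same route as the paper: both use the direct sum decomposition $\mathcal{A}=\rho([\g,\g])\oplus\Vect(M)^G$ from \hypref{l:A}{Lemma} to reduce to the $G$-invariant summand (the $\rho(\g)$ part being killed by $\pi$ since its flows stay in $G$-orbits), then invoke Schwarz's \hypref{t:vectquot}{Theorem} to produce $Y=\pi_*X\in\Vect(M/G)$ and conclude $\pi(\exp(tX)(x))=\exp(tY)(\pi(x))$ by uniqueness of integral curves. Your treatment is slightly more explicit about the commuting-flows factorisation and the $\pi$-relatedness needed to apply the ODE theorem, but the argument is the same.
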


\begin{proof}
Since $\pi_*$ will map any vector field in $\rho(\g)$ to the zero vector field on $M/G$, and local flows of $\Vect(M)^G$ and $\rho(\g)$ commute, it is enough to check that $\pi$ is orbital with respect to $\Vect(M)^G$ and $\Vect(M/G)$.  Let $X\in\Vect(M)^G$.  Then by \hypref{t:vectquot}{Theorem}, there is a vector field $Y\in\Vect(M/G)$ such that $\pi_*X=Y$.  Fix $x\in M$.  Then $$\frac{d}{dt}\pi(\exp(tX)(x))=\pi_*(X|_x)=Y|_{\pi(x)}=\frac{d}{dt}\exp(tY)(\pi(x)).$$
By the ODE theorem, we have that $$\pi(\exp(tX)(x))=\exp(tY)(\pi(x))$$
for all $t$ where it is defined.  Hence orbits in $\mathcal{O}_\mathcal{A}$ are mapped via $\pi$ to orbits of $M/G$ induced by $\Vect(M/G)$.
\end{proof}

\begin{corollary}\labell{c:flowlift}
A local flow of $M/G$ lifts to a $G$-equivariant local flow of $M$.
\end{corollary}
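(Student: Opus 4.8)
The plan is to realise the given local flow as the flow of a vector field downstairs, lift that vector field to a $G$-invariant one upstairs via the splitting in \hypref{t:vectquot}{Theorem}, and then observe that the flow of the lift is automatically $G$-equivariant and projects to the original flow. The work is almost entirely bookkeeping, since the two essential ingredients—surjectivity of $\pi_*$ and $G$-equivariance of flows of left-invariant vector fields—are already available.

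First I would identify a local flow $\phi$ on $M/G$ with the flow $\exp(\cdot Y)(\cdot)$ of its infinitesimal generator $Y$, defined on $\CIN(M/G)$ by $(Yf)(x)=\frac{d}{dt}\big|_{t=0}f(\phi(t,x))$. This $Y$ is a derivation whose maximal integral curves agree, by the uniqueness in \hypref{t:ode}{Theorem}, with $t\mapsto\phi(t,x)$; since $\phi$ has open domain, these have open domain, so $Y\in\Vect(M/G)$ by \hypref{p:opendomain}{Proposition}. Next, by the surjectivity of $\pi_*$ in \hypref{t:vectquot}{Theorem} there is $X\in\Vect(M)^G$ with $\pi_*X=Y$ (one may take $X=\sigma(Y)$ for a chosen splitting $\sigma$). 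Because $X$ is left-invariant, its local flow is $G$-equivariant, i.e. $g\cdot\exp(tX)(x)=\exp(tX)(g\cdot x)$ for all $g\in G$ and all $(t,x)$ in the flow domain.

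It then remains to verify that $\exp(\cdot X)(\cdot)$ projects to $\phi$. Arguing exactly as in the proof of \hypref{c:piorbital}{Corollary}, for each $x\in M$ one has $\frac{d}{dt}\pi(\exp(tX)(x))=\pi_*(X|_{\exp(tX)(x)})=Y|_{\pi(\exp(tX)(x))}$, so by the ODE theorem $\pi(\exp(tX)(x))=\exp(tY)(\pi(x))=\phi(t,\pi(x))$ wherever the upstairs flow is defined. Hence $\exp(\cdot X)(\cdot)$ is the desired $G$-equivariant local flow lifting $\phi$. The only delicate point is the comparison of flow domains: the identity above only shows $I^X_x\subseteq I^Y_{\pi(x)}$, so the lifted flow is a priori defined on a subset of the preimage of the domain of $\phi$; since this is all that is required for it to be a lift, I do not expect a genuine obstacle here, and the substance of the corollary reduces to the surjectivity of $\pi_*$ together with the $G$-equivariance already established for left-invariant flows.
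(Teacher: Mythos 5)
Your proposal is correct and follows essentially the same route as the paper: lift the generator $Y\in\Vect(M/G)$ to $X\in\Vect(M)^G$ via the splitting in \hypref{t:vectquot}{Theorem}, use the $G$-equivariance of flows of left-invariant vector fields, and project back using the ODE theorem. Your additional care in identifying the given local flow with the flow of its infinitesimal generator, and your remark on the comparison of flow domains, are elaborations of steps the paper leaves implicit rather than a different argument.
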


\begin{proof}
Fix a vector field $Y\in\Vect(M/G)$.  By \hypref{t:vectquot}{Theorem} there is a vector field $X\in\Vect(M)^G$ such that $\pi_*X=Y$. From the ODE theorem we have that $$\pi(\exp(tX)(x))=\exp(tY)(\pi(x))$$ for all $x\in M$ and $t\in I^X_x$.
\end{proof}

\begin{theorem}\labell{t:singfolA}
The orbits in $\mathcal{O}_\mathcal{A}$ are exactly the orbit-type strata on $M$.
\end{theorem}

\begin{proof}
Fix $x\in M$, and let $H\leq G$ be a closed subgroup of $G$ such that $x\in M_{(H)}$.  Choose $y\in O^\mathcal{A}_x$.  Then, there exist vector fields $X_1,...,X_k\in\mathcal{A}$ and $t_1,...,t_k\in\RR$ such that $$y=\exp(t_1X_1)\circ...\circ\exp(t_kX_k)(x).$$  But then, by \hypref{c:piorbital}{Corollary} and \hypref{t:vectquot}{Theorem}, there exist $Y_1,...,Y_k\in\Vect(M/G)$ such that $$\pi(y)=\exp(t_1Y_1)\circ...\circ\exp(t_kY_k)(\pi(x)).$$  Hence, $\pi(x)$ and $\pi(y)$ are in the same orbit $O_{\pi(x)}$.  But this is a stratum of the orbit-type stratification of $M/G$ by \hypref{t:stratvect}{Theorem}, and so $y\in M_{(H)}$.  Thus $O^\mathcal{A}_x\subseteq M_{(H)}$.\\

Now, let $z$ be a point in the same connected component of $M_{(H)}$ as $x$.  Then again by \hypref{t:stratvect}{Theorem}, $\pi(y)$ and $\pi(x)$ are in the same orbit $O_{\pi(x)}$, and hence there exist vector fields $Y_1,...,Y_k$ and $t_1,...,t_k\in\RR$ such that $\pi(y)=\exp(t_1X_1)\circ...\circ\exp(t_kX_k)(\pi(x))$.  By \hypref{c:flowlift}{Corollary}, there are vector fields $X_1,...,X_k\in\mathcal{A}$ such that $y=\exp(t_1X_1)\circ...\circ\exp(t_kX_k)(x)$.
\end{proof}

We again return to the case where $G$ is a compact Lie group now acting on a connected symplectic manifold $(M,\omega)$ in a Hamiltonian fashion, with $Z$ the zero set of the momentum map $\mu$.

$$\xymatrix{
Z \ar[r]^i \ar[d]_{\pi_Z} & M \ar[d]^{\pi} \\
Z/G \ar[r]_j & M/G \\
}$$

Recall that $\mathcal{A}=\rho(\g)+\Vect(M)^G$ and $\mathcal{A}_Z=\rho_Z(\g)+\Vect(Z)^G$ (see \hypref{d:A}{Definition} and \hypref{d:AZ}{Definition}).

\begin{proposition}\labell{p:iorbit}
$i$ is orbital with respect to $\mathcal{A}_Z$ and $\mathcal{A}$.
\end{proposition}

\begin{proof}
Let $X\in\mathcal{A}_Z$ and fix $z\in Z\subseteq M$.  Then by \hypref{p:vectZext}{Proposition} there exist a $G$-invariant open neighbourhood $U\subseteq M$ of $z$ and $\tilde{X}\in\mathcal{A}$ such that $X|_{U\cap Z}=\tilde{U\cap Z}$.  Applying the ODE theorem, we are done.
\end{proof}

\begin{proposition}\labell{p:piZorbit}
$\pi_Z$ is orbital with respect to $\mathcal{A}_Z$ and $\Vect(Z/G)$.
\end{proposition}

\begin{proof}
By \hypref{p:AZ}{Proposition}, it is enough to show this separately for $\rho_Z(\g)$ and $\Vect(Z)^G$.  For the first subalgebra, $$\pi(\exp(t\xi_Z)(z))=\pi(z)=\exp(0)(\pi(z))$$ for all $z\in Z$ and $t$ for which the integral curve is defined.\\

Now fix $X\in\Vect(Z)^G$.  Using \hypref{p:vectZext}{Proposition} cover $Z/G$ with a locally finite open cover $\{V_\alpha\}_{\alpha\in A}$ such that for every $\alpha\in A$, there exist $\tilde{X}^\alpha\in\Vect(M)^G$ satisfying $i_*(X|_{\pi_Z^{-1}(V_\alpha)})=\tilde{X}^\alpha|_{\pi^{-1}(j(V_\alpha))}.$ Note that for any $\alpha\in A$, $x\in V_\alpha$, $z\in\pi_Z^{-1}(x)$ and $f\in\mathfrak{n}(j(Z/G))$,
\begin{align*}
(\pi_*\tilde{X}^\alpha)|_{j(x)}f=&~\tilde{X}^\alpha|_{i(z)}\pi^*f\\
=&~X|_zi^*\pi^*f\\
=&~X|_z\pi_Z^*j^*f\\
=&~0.
\end{align*}
Let $\{\zeta_\alpha\}_{\alpha\in A}$ be a partition of unity subordinate to $\{V_\alpha\}$, and for each $\alpha\in A$, let $\tilde{\zeta}_\alpha$ be an extension of $\zeta_{\alpha}$ to $M/G$.  Define $$\tilde{Y}:=\sum_{\alpha}(\tilde{\zeta}_{\alpha}(\pi_*\tilde{X}^\alpha))|_{j(Z/G)}.$$ From the above, we have that $\tilde{Y}(f)=0$ for all $f\in\mathfrak{n}(j(Z/G))$, and so in particular, $\tilde{Y}$ restricts to a global derivation $Y\in\Der\CIN(S)$.  Also, for any $z\in Z$,
\begin{align*}
j_*\pi_{Z*}(X|_z)=&~\sum_{\alpha}\tilde{\zeta}_\alpha j_*\pi_{Z*}(X|_z)\\
=&~\sum_{\alpha}\tilde{\zeta}_\alpha\pi_*(\tilde{X}^\alpha|_{i(z)})\\
=&~\tilde{Y}|_{\pi(i(z))}\\
=&~j_*Y|_{\pi_Z(z)}.
\end{align*}
Thus, $\pi_{Z*}(X|_z)=Y|_{\pi_Z(z)}.$ Finally, we need to show that $Y$ is a vector field, and we shall do so by appealing to \hypref{p:opendomain}{Proposition}.  Fix $z\in Z$, and define $\gamma(t):=\pi_Z(\exp(tX)(z))$.  Differentiating, we see that $\gamma$ is an integral curve of $Y$ through $\pi_Z(z)$.  But $\gamma$ has an open domain and $\pi_Z$ is surjective, and so $\gamma$ is maximal.  Thus $Y$ is a vector field.
\end{proof}

\begin{proposition}\labell{p:jorbital2}
$j$ is orbital with respect to $\ham(Z/G)$ and $\Vect(M/G)$.
\end{proposition}

\begin{proof}
By \hypref{p:jorbital}{Proposition} orbits of $\ham(Z/G)$ are exactly the orbit-type strata of $Z/G$, which in turn are contained in the orbit-type strata of $M/G$. By \hypref{t:stratvect}{Theorem}, connected components of the orbit-type strata of $M/G$ are the orbits induced by $\Vect(M/G)$.
\end{proof}

\begin{lemma}\labell{l:GhamtangZ}
Vector fields in $\ham(M)^G$ are tangent to level sets of $\mu$.
\end{lemma}

\begin{proof}
Fix $X\in\ham(M)^G$.  There exists $f\in\CIN(M)^G$ such that $X=X_f$.  It is enough to show that for any $\xi\in\g$, we have $X(\mu^\xi)=0$.  Fix $\xi\in\g$.  Then
\begin{align*}
X(\mu^\xi)=&~d\mu^\xi(X)\\
=&~\omega(X,\xi_M)\\
=&~-df(\xi_M)=0.
\end{align*}
\end{proof}

\begin{lemma}\labell{l:hamsurj}
There is a surjective Lie algebra homomorphism $H:\ham(M)^G\to\ham(Z/G)$ sending $X\in\ham(M)^G$ to $(\pi_Z)_*(X|_Z)$.
\end{lemma}

\begin{proof}
Fix $X\in\ham(M)^G$, and let $f\in\CIN(M)^G$ such that $X=X_f$.  By \hypref{l:GhamtangZ}{Lemma}, we have that $X|_Z$ is tangent to $Z$.  By \hypref{p:opendomain}{Proposition}, since the integral curves of $X$ through points of $Z$ are contained in $Z$, these integral curves when restricted to $Z$ have open domains, and hence $X|_Z\in\Vect(Z)^G$.\\

We now need to show that $(\pi_Z)_*(X|_Z)$ is a smooth vector field on $Z/G$.  Define $$h:=j^*((\pi^*)^{-1}(f)).$$  We claim that $X_h\in\ham(Z/G)$ is exactly $(\pi_Z)_*(X|_Z)$.  By \hypref{p:jorbital}{Proposition} it is enough to show this on each stratum $(Z/G)_{(H)}$ of $Z/G$.  Since $X$ is $G$-invariant, it is in fact tangent to $Z_{(H)}$ by \hypref{t:singfolA}{Theorem} and \hypref{l:GhamtangZ}{Lemma} for each $H\leq G$.  Fix a nonempty $Z_{(H)}$.  Then  $Y:=(\pi_{(H)})_*(X|_{Z_{(H)}})$ is a smooth vector field on $(Z/G)_{(H)}$.  Let $g\in\CIN(M/G)$ such that $\pi^*g=f$.  We have
\begin{align*}
\pi_{(H)}^*(Y\hook\omega_{(H)})=&~X_f|_{Z_{(H)}}\hook i_{(H)}^*\omega\\
=&~i_{(H)}^*(X_f\hook\omega)\\
=&~i_{(H)}^*(-df)\\
=&~(\pi\circ i_{(H)})^*(-dg)\\
=&~(j\circ\pi_{(H)})^*(-dg)\\
=&~\pi_{(H)}^*(-dj^*g|_{(Z/G)_{(H)}})\\
=&~\pi_{(H)}^*(-d(h|_{(Z/G)_{(H)}})).
\end{align*}

Now, since $Z_{(H)}$ is a $G$-manifold with quotient manifold $(Z/G)_{(H)}$, it is known that $\pi_{(H)}^*$ is an isomorphism of complexes between differential forms on $(Z/G)_{(H)}$ and basic differential forms on $Z_{(H)}$.  Hence, $$Y\hook\omega_{(H)}=-d(h|_{(Z/G)_{(H)}}).$$  Thus, $Y=X_h|_{(Z/G)_{(H)}}$.  Thus the map $H$ is well-defined.\\

To show that this map is surjective, it is enough to show that there is a surjective map sending $f\in\CIN(M)^G$ to $j^*((\pi^*)^{-1}(f))$.  But $\pi^*$ is an isomorphism between $\CIN(M/G)$ and $\CIN(M)^G$, and since $Z/G$ is closed in $M/G$, we have that $j^*$ is a surjection from $\CIN(M/G)$ onto $\CIN(Z/G)$ by \hypref{p:closedsubset}{Proposition}.\\

We now check that this is a Lie algebra homomorphism.  It is clearly $\RR$-linear.  Let $f,g\in\CIN(M)^G$.  Then $$(\pi^*)^{-1}(\pois{f}{g})=\pois{(\pi^*)^{-1}f}{(\pi^*)^{-1}g}_{M/G},$$ and $j^*$ is a Poisson morphism.  Thus, $$H(X_{\pois{f}{g}})=\pois{H(X_f)}{H(X_g)}_{Z/G}.$$
\end{proof}

\begin{proposition}\labell{p:vectZorbits}
The orbits of $\mathcal{A}_Z$ are contained in the orbit-type strata of $Z$.  Moreover, if $G$ is connected, the orbits are exactly the orbit-type strata.
\end{proposition}

\begin{proof}
By \hypref{p:iorbit}{Proposition}, $i$ is orbital with respect to $\mathcal{A}_Z$ and $\mathcal{A}$.  Thus, orbits of $\mathcal{A}_Z$ are mapped into orbits of $\mathcal{A}$, which by \hypref{t:singfolA}{Theorem} are exactly the orbit-type strata on $M$.  Thus, the orbits of $\mathcal{A}_Z$ are contained in the orbit-type strata on $M$ intersected with $Z$.  But these are precisely the orbit-type strata of $Z$.\\

For the opposite inclusion, assume that $G$ is connected.  Let $x,y$ be in the same orbit-type stratum in $Z$.  Then $\pi_Z(x)$ and $\pi_Z(y)$ are in the same orbit-type stratum in $Z/G$.  Thus by \hypref{p:jorbital}{Proposition}, there exist $f_1,...,f_k\in\CIN(Z/G)$ and $t_1,...,t_k\in\RR$ such that the Hamiltonian vector fields $X_{f_1},...,X_{f_k}$ satisfy $$\pi_Z(y)=\exp(t_1X_{f_1})\circ...\circ\exp(t_kX_{f_k})(\pi_Z(x)).$$
By \hypref{l:hamsurj}{Lemma}, there exist $Y_1,...,Y_k\in\ham(M)^G$ such that $(\pi_Z)_*(Y_i|_Z)=X_{f_i}$ for each $i=1,...,k$.  So, we have $$\pi_Z(y)=\pi_Z(\exp(t_1Y_1|_Z)\circ...\circ\exp(t_kY_k|_Z)(\pi_Z(x)).$$ In particular, $$z:=\exp(t_1Y_1|_Z)\circ...\circ\exp(t_kY_k)(x)$$ is contained in the same $G$-orbit as $y$.  Thus there is some $g\in G$ such that $g\cdot z=y$.  Since $G$ is compact and connected, there is some $\tau\in\RR$ and $\xi\in\g$ such that $y=g\cdot z=\exp(\tau\xi_Z)(z)$.  Thus, $x$ and $y$ are in the same orbit of $\mathcal{A}_Z$.
\end{proof}

\begin{proposition}
If $0\in\g^*$ is a regular value of $\mu$, then $j$ is orbital (with respect to $\Vect(Z/G)$ and $\Vect(M/G)$).
\end{proposition}

\begin{proof}
By \hypref{t:regvalue}{Theorem} we know that in this case, orbits of $\ham(Z/G)$ and $\Vect(Z/G)$ coincide.  Thus, applying \hypref{p:jorbital2}{Proposition} we are done.
\end{proof}

\begin{remark}
Let $G$ be a compact Lie group acting on a connected manifold $M$, and let $Z$ be an invariant closed subset of $M$.  Then it is not true that the inclusion $j:Z/G\to M/G$ is orbital with respect to $\Vect(Z/G)$ and $\Vect(M/G)$.  Indeed, let $G=\SS^1$ and $M=\RR\times\RR^2$.  $G$ acts on $M$ diagonally, trivially on $\RR$ and by rotations on $\RR^2$.  The geometric quotient $M/G$ is diffeomorphic to the closed half-plane $\RR\times[0,\infty)$.  Set coordinates $(x,y,z)$ on $M$, where $x$ describes $\RR$ and $(y,z)$ describes $\RR^2$ in the product $\RR\times\RR^2$.  Let $Z$ be the cone in $M$ given by $\{x^2=y^2+z^2\}.$  This is $G$-invariant, and $Z/G$ is diffeomorphic to $\RR$.  $\partial_x$ is a vector field on $Z/G$ whose orbit is all of $Z/G$, yet $j(Z/G)$ is not contained in one orbit of $\Vect(M/G)$.
\end{remark}

Given the above remark, zero sets of momentum maps are still special invariant closed subsets.  We are left with the following question.

\begin{question}
Is $j$ orbital in the general case when $0\in\g^*$ is a critical value of $\mu$?
\end{question} 


\begin{thebibliography}{9}
\bibliographystyle{alpha}

\bibitem{ACG}
J. M. Arms, R. H. Cushman, M. J. Gotay, ``A universal reduction procedure for Hamiltonian group actions'', In: The geometry of Hamiltonian systems (Berkeley, CA, 1989), 33--51, \emph{Math. Sci. Res. Inst. Publ.}, \textbf{22}, Springer, New York, 1991.

\bibitem{BL}
L. Bates and E. Lerman, ``Proper group actions and symplectic stratified spaces'', \emph{Pac. J. of Math (2)}, \textbf{181} (1997), 201--229.

\bibitem{bierstone1}
E. Bierstone, ``Lifting isotopies from orbit spaces'', \emph{Topology}, \textbf{14} (1975), no. 3, 245--252.

\bibitem{bierstone2}
E. Bierstone, ``The structure of orbit spaces and the singularities of equivariant mappings'', \emph{Monograf\'ias de Matem\'atica}, \textbf{35}, Instituto de Matem\'atica Pura e Aplicada, Rio de Janeiro, 1980.

\bibitem{CS}
R. H. Cushman and J. \'Sniatycki, ``Differential structure of orbit spaces'', \emph{Canad. J. Math.}, \textbf{53} (2001), no. 4, 715--755.

\bibitem{DK}
J. J. Duistermaat and J. A. Kolk, \emph{Lie Groups}, Springer-Verlag, Berlin, 2000.

\bibitem{GM}
M. Goresky and R. MacPherson, \emph{Stratified Morse Theory, Ergebnisse der Mathematik und ihrer Grenzgebiete (3)}, vol 14, Springer-Verlag, Berlin, 1988.

\bibitem{GGK}
V. Guillemin, V. Ginzburg, and Y. Karshon, \emph{Momentum Maps, Cobordisms, and Hamiltonian Group Actions, Mathematical Surveys and Monographs, 98}, American Mathematical Society, Providence, RI, 2002.

\bibitem{jurdjevic}
V. Jurdjevic, \emph{Geometric Control Theory, Cambridge Studies in Advanced Mathematics, 52}, Cambridge University Press, Cambridge, 1997.

\bibitem{koszul}
J.-L. Koszul, ``Sur certains groupes de transformations de Lie'', \emph{G\'eom\'etrie Diff\'erentielle, Colloques Internationaux du Centre National Recherche Scientifique (Strasbourg, 1953)}, Centre National Recherche Scientifique, Paris (1953), 137--141.

\bibitem{LW}
E. Lerman and C. Willett, ``The topological structure of contact and symplectic quotients'', \emph{Internat. Math. Res. Notices}, \textbf{2001}, no. 1, 33--52.

\bibitem{lusala-sniatycki}
T. Lusala and J. \'Sniatycki, ``Stratified subcartesian spaces'', \emph{Canad. Math. Bull.}, \textbf{54} (2011), no. 4, 693--705.

\bibitem{LSW}
T. Lusala, J. \'Sniatycki, and J. Watts, ``Regular points of a subcartesian space'', \emph{Canad. Math. Bull.}, \textbf{53} (2010), no. 2, 340--346.

\bibitem{MW}
J. Marsden and A. Weinstein, ``Reduction of symplectic manifolds with symmetry'', \emph{Rep. Mathematical Phys.}, \textbf{5} (1974), no. 1, 122--130.

\bibitem{marshall}
C. D. Marshall, ``Calculus on subcartesian spaces'', \emph{J. Differential Geometry}, \textbf{10} (1975), no. 4, 551-573.

\bibitem{MS}
E. Meinrenken and R. Sjamaar, ``Singular reduction and quantization'', \emph{Topology} \textbf{38} (1999), 699--762.

\bibitem{meyer}
K. Meyer, ``Symmetries and integrals in mechanics'', \emph{Dynamical Systems (Proc. Sympos., Univ. Bahia, Salvador, 1971)}, 259--272, Academic Press, New York, 1973.

\bibitem{palais}
R.\ Palais, ``On the existence of slices for actions of non-compact Lie
groups'', \emph{Ann.\ of Math.}, \textbf{73} (1961), 295--323.

\bibitem{pflaum}
M. J. Pflaum, \emph{Analytic and Geometric Study of Stratified Spaces, Lecture Notes in Mathematics}, vol. 1768, Springer-Verlag, Berlin, 2001.

\bibitem{schwarz1}
G. W. Schwarz, ``Smooth functions invariant under the action of a compact Lie group'', \emph{Topology}, \textbf{14} (1975), 63--68.

\bibitem{schwarz2}
G. W. Schwarz, ``Covering smooth homotopies of orbit spaces'', \emph{Bull. Amer. Math. Soc.}, \textbf{83} (1977), 1028--1030.

\bibitem{schwarz3}
G. W. Schwarz, ``Lifting smooth homotopies of orbit spaces'', \emph{Inst. Hautes \'Etudes Sci. Publ. Math}, \textbf{51} (1980), 37--135

\bibitem{sikorski1}
R. Sikorski, ``Abstract covariant derivative'', \emph{Colloq. Math.}, \textbf{18} (1967), 251--272.

\bibitem{sikorski2}
R. Sikorski, ``Differential Modules'', \emph{Colloq. Math.}, \textbf{24} (1971/72), 45--79.

\bibitem{lerman-sjamaar}
R. Sjamaar and E. Lerman, ``Stratified symplectic spaces and reduction'', \emph{Ann. of Math. (2)}, \textbf{134} (1991), no. 2, 375--422.

\bibitem{sniatycki0}
J. \'Sniatycki, ``Integral curves of derivations on locally semi-algebraic differential spaces.  Dynamical systems and differential equations (Wilmington, NC, 2002)'', \emph{Discrete Contin. Dyn. Syst.} (2003), suppl., 827--833.

\bibitem{sniatycki}
J. \'Sniatycki, ``Orbits of families of vector fields on subcartesian spaces'', \emph{Ann. Inst. Fourier (Grenoble)}, \textbf{53} (2003), no. 7, 2257--2296.

\bibitem{sniatycki-book}
J. \'Sniatycki, \emph{Differential Geometry of Singular Spaces and Reduction of Symmetry, New Mathematical Monographs: no. 23}, Cambridge University Press, 2013.

\bibitem{stacey}
A. Stacey, ``Comparative Smootheology'', \emph{Theory Appl. Categ.} \textbf{25} (2011), no. 4, 64--117.

\bibitem{sussmann}
H. J. Sussmann, ``Orbits of families of vector fields and integrability of distributions'', \emph{Trans. Amer. Math. Soc.}, \textbf{180} (1973), 171--188.


\bibitem{watts2}
J. Watts, \emph{Diffeologies, Differential Spaces, and Symplectic Geometry}, Ph.D. thesis, University of Toronto, 2012.

\end{thebibliography}
\end{document}